\newcommand{\textcyr}[1]{%
 {\fontencoding{OT2}\fontfamily{wncyr}\fontseries{m}\fontshape{n}
 \selectfont #1}}
\newcommand{\Sha}{{\!\be\lbe\mbox{\textcyr{Sh}}}}
\numberwithin{equation}{section}
\def\Gtil{{\widetilde{G}}}
\def\bh{{\mathbb H}}
\def\spec{{\rm{Spec}}\,}
\def\img{{\rm{Im}}\,}
\def\der{\e\rm{der}}
\def\be{\kern -.1em}
\def\lbe{\kern -.025em}
\def\krn{{\rm{Ker}}\e }
\def\img{{\rm{Im}}\e }
\def\cok{{\rm{Coker}}}
\def\tor{\e\rm{tor}}
\def\ra{\rightarrow}
\def\lra{\longrightarrow}
\def\e{\kern 0.08em}
\def\le{\kern 0.03em}
\def\ng{\kern -0.04em}
\def\krn{{\rm{Ker}}\,}
\def\cok{{\rm{Coker}}\,}
\newtheorem{lemma}{Lemma}[section]
\newtheorem{theorem}[lemma]{Theorem}
\newtheorem{corollary}[lemma]{Corollary}
\newtheorem{proposition}[lemma]{Proposition}
\theoremstyle{definition}
\newtheorem{definition}[lemma]{Definition}
\theoremstyle{remark}
\newtheorem{remark}[lemma]{Remark}
\newtheorem{remarks}[lemma]{Remarks}
\newtheorem{example}[lemma]{Example}
\newtheorem{examples}[lemma]{Examples}
\begin{document}

\title[Crossed modules and nonabelian cohomology ]{Quasi-abelian crossed
modules and nonabelian cohomology}

\subjclass[2000]{Primary 20G10 ; Secondary 20G25, 20G30}

\author{Cristian D. Gonz\'alez-Avil\'es}
\address{Departamento de Matem\'aticas, Universidad de La Serena, Chile}
\email{cgonzalez@userena.cl}

\keywords{Crossed modules, nonabelian cohomology, reductive groups,
abelian cohomology}

\thanks{The author is partially supported by Fondecyt grant
1080025}

\dedicatory{\`A Jean-Claude Douai, \`a l'occasion de son depart en
retraite.}

\maketitle

\begin{abstract} We extend the work of M.Borovoi on
the nonabelian Galois cohomology of linear reductive algebraic
groups over number fields to a general base scheme. As an
application, we obtain new results on the arithmetic of such groups
over global function fields.
\end{abstract}

\section{Introduction}

Let $G$ be a linear reductive algebraic group over a number field
$K$. In \cite{Bor,Bor2}, M.Borovoi studied the Galois cohomology
sets $H^{\e i}(K,G)$ ($i=1,2$) by relating them to certain ``abelian
cohomology groups" $H^{\e i}_{\rm{ab}}(K,G)$. In particular, it was
shown in \cite{Bor} that there exists a {\it surjective}
abelianization map ${\rm{ab}}^{1}\colon H^{\le 1} (K,G)\ra H^{\e
1}_{\rm{ab}}(K,G)$. Later, in \cite{BKG}, Proposition 6.6, a new
proof of this fact was given which suggested to the author the
existence of a prolongation, at least for certain types of crossed
modules, of L.Breen's fundamental exact sequence \cite{Br}, (4.2.2)
(see Proposition 2.4). It was expected that such an extension of
Breen's sequence would provide, via work of J.-C.Douai, a common
explanation for the validity of both \cite{BKG}, Proposition 6.5,
and \cite{Bor2}, Theorem 5.5. The purpose of this paper is to
establish such a prolongation of Breen's sequence over any base
scheme $S$, thus confirming the above expectation (see Theorem 1.1
below). In particular, we extend Borovoi's abelian cohomology theory
to this general setting. To be precise, let $S$ be any scheme, let
$S_{\e\rm{fl}}$ be the small fppf site over $S$ and let $G$ be a
reductive group scheme over $S$. Write $G^{\der}$ for the derived
group of $G$, $\widetilde{G}$ for the simply-connected central cover
of $G^{\der}$ and $\mu$ for the fundamental group of $G^{\der}$.
Then $G$ defines a crossed module $(\widetilde{G}\be\ra\be G)$ on
$S_{\e\rm{fl}}$ which is quasi-isomorphic to the abelian crossed
module $\big(Z\lbe\big(\e\widetilde{G}\,\big)\!\ra\be Z(G)\big)$
(thus it is {\it quasi-abelian}, in the sense of Definition 3.2).
The abelian (flat) cohomology groups $H^{\e
i}_{\rm{ab}}(S_{\e\rm{fl}} ,G)$ of $G$ are by definition the fppf
hypercohomology groups ${\bh}^{\e i}\be\big(S_{\e\rm{fl}},Z\lbe\big(
\e\widetilde{G}\,\big)\!\ra\be Z(G)\big)$. Now let $H^{\le
2}\lbe\big (S_{\e\rm{fl}},\widetilde{G}\e\big)$ be the second
cohomology set introduced by J.Giraud in \cite{Gi}, Chapter IV. It
contains a distinguished element $\varepsilon_{\lbe\widetilde{G}}$,
the so-called {\it unit class}, and a subset
$H^{2}\lbe\big(S_{\e\rm{fl}},\widetilde{G}\e\big) ^{\lbe\prime}$ of
{\it neutral classes} containing $\varepsilon_{\lbe\widetilde{G}}$.
We regard $H^{2}\lbe\big(S_{\e\rm{fl}},\widetilde{G}\e\big)$ as a
pointed set with basepoint $\varepsilon_{\widetilde{G}}$. Analogous
definitions apply to $H^{2}(S_{\e\rm{fl}},G)$. Then, by using ideas
of L.Breen \cite{Br}, J.Giraud \cite{Gi} and M.Borovoi \cite{Bor2},
we are able to define abelianization maps ${\rm{ab}}^{i}\colon
H^{\le i} (S_{\e\rm{fl}},G)\ra H^{\e i}_{\rm{ab}}(S_{\e\rm{fl}},G)$
for $i=0,1,2$ (which generalize those defined in \cite{Bor,Bor2}) so
that the following theorem holds. To simplify the notation, let
$H^{i}(G)$ and $H^{i}_{\rm{ab}}(G)$ denote $H^{i}(S_{\e\rm{fl}},G)$
and $H^{i}_{\rm{ab}}(S_{\e\rm{fl}},G)$, respectively.

\begin{theorem} Let $G$ be a reductive group scheme over a scheme $S$.
Then there exists a sequence of flat (fppf) cohomology sets
$$
\begin{array}{rcl}
1&\longrightarrow&\mu(S)\ra\widetilde{G}(S)\ra G (S)
\overset{\rm{ab}^{0}}\longrightarrow H^{\e 0}_{\rm{ab}}(G \e)\ra H^{\le
1}\be\big(\widetilde{G }\e\big)
\ra H^{\le
1}(G \e)\\
&\overset{\rm{ab}^{1}}\longrightarrow& H^{\e 1}_{\lbe\rm{ab}}(G
\e)\overset{\delta_{1}}\longrightarrow
H^{2}\lbe\big(\widetilde{G}\e\big) \ra H^{\e 2}(G \e)
\overset{{\rm{ab}}^{\lbe 2}} \longrightarrow H^{\e 2}_{\rm{ab}}(G
\e)\ra H^{\e 3}\big(Z\big(
\widetilde{G}\e\big)\big)\\
&\longrightarrow& H^{\e 3}(Z(G\e))\ra\dots,
\end{array}
$$
which is an exact sequence of pointed sets at every term except
$H^{\e 1}_{{\rm{ab}}} (G)$, where a class $y\in H^{\e
1}_{{\rm{ab}}}(G)$ is in the image of $\,{\rm{ab}}^{1}$ if, and only
if, $\delta_{1}(y)\in H^{2}\lbe\big(\widetilde{G}\e\big)
^{\lbe\prime}$.
\end{theorem}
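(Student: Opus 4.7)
The plan is to assemble the sequence from two ingredients: Breen's fundamental exact sequence (Proposition~2.4) applied to the non-abelian crossed module $(\Gtil\ra G)$, and the long exact hypercohomology sequence of its quasi-isomorphic abelian avatar $\bigl(Z(\Gtil\e)\ra Z(G)\bigr)$. The abelianization maps ${\rm{ab}}^{i}$ and the connecting map $\delta_{1}$ will arise naturally from this quasi-isomorphism. Concretely, I would first write down Breen's sequence for $(\Gtil\ra G)$, obtaining the initial nine-term portion
\[
1\ra\mu(S)\ra\Gtil(S)\ra G(S)\ra\bh^{\e 0}(\Gtil\ra G)\ra H^{1}(\Gtil\e)\ra H^{1}(G)\ra\bh^{1}(\Gtil\ra G)\ra H^{2}(\Gtil\e)\ra H^{2}(G),
\]
the leftmost kernel being $\mu(S)$ by the definition of $\mu$. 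Since $(\Gtil\ra G)$ is quasi-abelian, it is quasi-isomorphic to $\bigl(Z(\Gtil\e)\ra Z(G)\bigr)$, whence $\bh^{\e i}(\Gtil\ra G)$ is canonically identified with $\bh^{\e i}\bigl(Z(\Gtil\e)\ra Z(G)\bigr)=H^{\e i}_{\rm{ab}}(G)$ for every $i$. Under this identification the Breen connecting maps become ${\rm{ab}}^{0}$, ${\rm{ab}}^{1}$ and $\delta_{1}$, yielding the theorem's sequence through $H^{2}(\Gtil\e)\ra H^{2}(G)$.

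For the tail I would use the short exact sequence of complexes of fppf sheaves on $S_{\e\rm{fl}}$
\[
0\ra Z(G)\ra\bigl(Z(\Gtil\e)\ra Z(G)\bigr)\ra Z(\Gtil\e)[1]\ra 0,
\]
in which $Z(G)$ sits in degree $0$ and $Z(\Gtil\e)$ in degree $-1$. Its associated long exact hypercohomology sequence is purely abelian and supplies
\[
\dots\ra H^{\e i}(Z(G))\ra H^{\e i}_{\rm{ab}}(G)\ra H^{\e i+1}(Z(\Gtil\e))\ra H^{\e i+1}(Z(G))\ra H^{\e i+1}_{\rm{ab}}(G)\ra\dots,
\]
which for $i\ge 2$ is precisely the required continuation $H^{\e 2}_{\rm{ab}}(G)\ra H^{3}(Z(\Gtil\e))\ra H^{3}(Z(G))\ra\dots$. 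It then remains to construct the splice map ${\rm{ab}}^{2}\colon H^{2}(G)\ra H^{\e 2}_{\rm{ab}}(G)$; following Borovoi~\cite{Bor2} and Giraud~\cite{Gi}, I would define it by a gerbe-theoretic construction that factors through the quasi-isomorphism, essentially by attaching to a $G$-gerbe its canonical abelian lien bound by the complex $\bigl(Z(\Gtil\e)\ra Z(G)\bigr)$.

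The main obstacle is the genuinely non-abelian content at the two splice points. Exactness at $H^{2}(G)$ requires showing that a $G$-gerbe with trivial abelianization lifts to a $\Gtil$-gerbe; this will rest on Giraud's non-abelian $H^{2}$ machinery (\cite{Gi}, Chapter~IV) together with the quasi-abelian structure, via a careful analysis of the fibre of the pushforward $H^{2}(\Gtil\e)\ra H^{2}(G)$ on liens. Even more delicate is the characterization of the image of ${\rm{ab}}^{1}$: a class $y\in H^{\e 1}_{\rm{ab}}(G)$ lies in that image precisely when $\delta_{1}(y)\in H^{2}(\Gtil\e)$ is \emph{neutral}, i.e.\ admits a global object. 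Establishing this equivalence demands a concrete gerbe-theoretic construction of $\delta_{1}$ and an analysis of when its output bears a global trivialization; I expect this to be the technical heart of the argument, for only here does the quasi-abelian prolongation provide more than a formal splice of two long exact sequences.
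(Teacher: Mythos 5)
Your overall architecture --- Breen's sequence in low degrees, the long exact hypercohomology sequence of $0\ra (0\ra Z(G))\ra (Z(\widetilde{G}\e)\ra Z(G))\ra (Z(\widetilde{G}\e)\ra 0)\ra 0$ for the tail, and splice maps in between --- does match the paper's strategy, and your treatment of the tail and of the identification $H^{\e i}(\widetilde{G}\ra G)\simeq H^{\e i}_{\rm{ab}}(G)$ via the quasi-isomorphism with the center is exactly what the paper does (Proposition 3.5, Corollary 3.6). But there is a genuine gap at the centre of your argument: you assert that Breen's fundamental sequence already supplies the segment $H^{1}(\widetilde{G}\ra G)\ra H^{2}\big(\widetilde{G}\e\big)\ra H^{2}(G)$ and that $\delta_{1}$ is one of ``the Breen connecting maps.'' Breen's sequence ((4.2.2), quoted as Proposition 2.4) terminates at $H^{1}(F\ra G)$; there is no degree-two cohomology of a non-abelian crossed module in that framework, and Giraud's $H^{2}\big(\widetilde{G}\e\big)$ is only a pointed set carrying a simply transitive action of $H^{2}\big(Z\big(\widetilde{G}\e\big)\big)$. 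The prolongation past $H^{1}(F\ra G)$ is precisely the new content of the paper, not an input one may cite. The paper constructs $\delta_{1}$ by hand by the formula $\delta_{1}(y)=\pi^{(1)}(y)\cdot\varepsilon_{F}$ using that action, and likewise ${\rm{ab}}^{2}(s)=j^{(2)}(x)$ where $x$ is the unique class with $s=x\cdot\varepsilon_{G}$; neither is a formal consequence of a quasi-isomorphism.

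Moreover, the two statements you defer as ``the technical heart'' are exactly the ones requiring new arguments, and your proposal gives no route to them. Exactness at $H^{2}\big(\widetilde{G}\e\big)$ and at $H^{2}(G)$ (Propositions 3.18 and 4.1) in fact follow rather formally once one invokes the compatibility of $\partial^{(2)}$ with $\partial_{Z}^{(2)}$ and the two simply transitive actions (diagram \eqref{comp}, from Giraud IV.3.3.4); no analysis of fibres of a pushforward on liens is needed. The genuinely hard point is the characterization of ${\rm{im}}\,{\rm{ab}}^{1}$ as $\{y:\delta_{1}(y)\ \text{neutral}\}$, which the paper proves via (i) the identity $n_{F}(p)=d_{F}(p)\cdot\varepsilon_{F}$ relating Giraud's coboundary to neutral classes (Proposition 3.7), (ii) the commutativity $\pi^{(1)}\circ{\rm{ab}}^{1}=d_{F}\circ\tilde{b}^{\e(1)}_{G}$ (Lemma 3.16), whose proof constructs an explicit morphism of $Z(F)$-gerbes from lifts of torsors, and (iii) the twisting formula ${\rm{ab}}^{1}(p\cdot q)=j^{\le(1)}(p)+{\rm{ab}}^{1}(q)$ (Corollary 3.12), which rests on the twisting apparatus of Propositions 3.11--3.14. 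None of these ingredients appear in your outline, so as written the proposal reduces the theorem to its hardest, unproved assertions.
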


The above theorem is, in fact, a corollary of Theorem 4.2, which
applies to any quasi-abelian crossed module. When $S$ is the
spectrum of a number field $K$, the theorem shows, as expected, that
one and the same fact underlie the validity of both \cite{BKG},
Proposition 6.6, and \cite{Bor2}, Theorem 5.5. Namely, that all
classes in $H^{2}\lbe\big(K_{\text{fl}},\widetilde{G}\e\big)$ are
neutral. See Section 5.

The theorem yields the following {\it integral} version of
\cite{Bor}, Theorem 5.7.

\begin{corollary} Let $G$ be a reductive group scheme over the
spectrum $S$ of the ring of integers of a number field. Then the
first abelianization map ${\rm{ab}}^{1}\colon H^{1}(S_{\e\rm{fl}},
G) \ra H^{1}_{{\rm{ab}}}(S_{\e\rm{fl}},G)$ is surjective.
\end{corollary}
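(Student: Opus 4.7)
The plan is to apply Theorem 1.1 to reduce the corollary to a neutrality statement for classes in $H^{2}\lbe\big(S_{\e\mathrm{fl}},\widetilde{G}\e\big)$, and then transfer Borovoi's result over number fields to the integral setting. Indeed, by Theorem 1.1 a class $y\in H^{1}_{\lbe\mathrm{ab}}(S_{\e\mathrm{fl}},G)$ is in the image of $\mathrm{ab}^{1}$ if and only if $\delta_{1}(y)\in H^{2}\lbe\big(S_{\e\mathrm{fl}},\widetilde{G}\e\big)^{\lbe\prime}$, so the surjectivity of $\mathrm{ab}^{1}$ is equivalent to the inclusion $\delta_{1}(H^{1}_{\lbe\mathrm{ab}}(G))\subseteq H^{2}\lbe\big(\widetilde{G}\e\big)^{\lbe\prime}$. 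In line with the remark after Theorem 1.1, I would in fact establish the stronger statement that \emph{every} class of $H^{2}\lbe\big(S_{\e\mathrm{fl}},\widetilde{G}\e\big)$ is neutral.

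Given $\xi\in H^{2}\lbe\big(S_{\e\mathrm{fl}},\widetilde{G}\e\big)$, the first step is to restrict to the generic point $\eta=\mathrm{Spec}\,K\hookrightarrow S$, where $K$ is the fraction field of the ring of integers. Since $\widetilde{G}_{K}$ is simply connected semisimple over $K$, the result of Borovoi \cite{Bor2}, Theorem 5.5 (recalled in the remark following Theorem 1.1) yields that $\xi_{K}$ is neutral: there exist an fppf cover $U\to\eta$ and a $\widetilde{G}_{K}$-torsor $T_{K}$ on $U$ whose class presents $\xi_{K}$.

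The principal step, and the main technical obstacle, is to propagate this neutralization from $\eta$ to $S$. Because $S$ is Dedekind of dimension one and $\widetilde{G}$ is smooth and affine, $\widetilde{G}$-torsors on $S$ are controlled by their generic fibre together with local data at the finitely many closed points. At each closed point $v\in S$, Kneser's theorem gives $H^{1}(K_{v},\widetilde{G})=0$ for non-archimedean $v$, and smoothness combined with Hensel's lemma yields $H^{1}(\mathcal{O}_{v}^{\e h},\widetilde{G})=0$, so the local obstructions vanish. Patching $T_{K}$ with trivial local torsors on an fppf cover of $S$ should then produce a global $\widetilde{G}$-torsor neutralizing $\xi$. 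I would make the gluing rigorous either via a Mayer--Vietoris argument on the flat site, or by translating the vanishing of $\Sha^{2}(K,\widetilde{G})$ (for $\widetilde{G}$ simply connected semisimple) into the required neutrality statement in Giraud's gerbe formalism, thereby closing the loop with Theorem 1.1.
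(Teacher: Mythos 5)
Your reduction is exactly the paper's own: by Theorem 1.1, surjectivity of ${\rm ab}^1$ follows once every class of $H^{2}(S_{\rm fl},\widetilde{G})$ is neutral, and this is how the paper proceeds (Corollary 4.5, the notion of ``scheme of Douai type'' in Definition 5.2, and Theorem 5.5(i) together with Remark 5.6(d)). The difference lies in how that neutrality statement is obtained. The paper does not prove it; it invokes a theorem of Douai (Example 5.4(iii), citing \cite{Do3}, Theorem 1.1 and Remark (b), supplemented by \cite{Mi}) asserting that the spectrum of the ring of integers of a number field is of Douai type. You instead attempt to prove the neutrality directly, and this is where your argument has genuine gaps.

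First, your description of what the generic-fibre result gives is not what neutrality means: a class $\xi_K\in H^{2}(K_{\rm fl},\widetilde{G}_K)$ being neutral means that the associated gerbe admits a \emph{global} object over $\spec K$ (equivalently, $\xi_K=n(p)$ for some $p\in H^{1}(K,{\rm Inn}(\widetilde{G}_K))$, as in \eqref{bn}); the existence of a torsor on some fppf cover $U\to\eta$ is automatic for any gerbe and carries no information. Second, and more seriously, the propagation from $\eta$ to $S$ is not an argument. The vanishing statements $H^{1}(K_v,\widetilde{G})=0$ and $H^{1}(\mathcal{O}_v^{h},\widetilde{G})=0$ concern degree-one cohomology and do not by themselves allow you to extend a global object of a gerbe from the generic point to all of $S$; there is no Mayer--Vietoris sequence for nonabelian $H^{2}$ performing this gluing, and ``$\Sha^{2}(K,\widetilde{G})$'' is not a defined object for nonabelian $\widetilde{G}$ in a way that would close the loop --- the passage from generic neutrality to neutrality over $S$ is precisely the content of Douai's theorem. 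His actual proof goes through the lien: it is globally represented by a quasi-split form $H$ of $\widetilde{G}$ over $S$, and one then shows that $d_H\colon H^{1}(S_{\rm fl},H^{\,\rm ad})\to H^{2}(S_{\rm fl},Z(H))$ is surjective using arithmetic duality for the finite flat group scheme $Z(H)$ over $S$ together with Harder's results. Either supply that argument or cite \cite{Do3} as the paper does; as written, the key input to your proof is asserted rather than proved.
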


The following is a brief summary of the paper. In Section 1 we
review basic facts from Breen's nonabelian cohomology theory of
crossed modules. In Section 2 we introduce quasi-abelian crossed
modules and establish a part of the sequence appearing in Theorem
4.2 (which is the main theorem of the paper). The basic reference
for this Section is the Giraud-Grothendieck nonabelian cohomology
theory \cite{Gi}. In Section 4, following M.Borovoi, we define the
map ${\rm{ab}}^{2}$ and obtain the latter part of the sequence
appearing in Theorem 4.2. In Section 5, which concludes the paper,
we discuss applications of Theorem 1.1 to linear reductive algebraic
groups over certain types of fields, especially of positive
characteristic. Additional applications are discussed in \cite{GA}.

\section*{Acknowledgements}
I am very grateful to Mikhail Borovoi and Lawrence Breen for their
enlightening comments. I am also very grateful to Cyril Demarche,
who read a preliminary version of this paper and suggested proofs of
Proposition 3.11, Corollary 3.12 and Lemma 3.16. Finally, I thank
Jean-Claude Douai for sending me copies of some of his papers
(including his thesis), and Rick Jardine and Dino Lorenzini for
additional bibliographical assistance.

\section{Preliminaries}

Let $E$ be a site and let $\widetilde{E}$ be the topos defined by
$E$. If $G$ is a group of $\widetilde{E}$, $\text{Inn}(G)$ will
denote the quotient $G/Z(G)$, which is canonically isomorphic to the
group of inner automorphisms of $G$.

We begin by recalling the basic properties of the nonabelian
cohomology theory of crossed modules developed in
\cite{Br}\footnote{In contrast to \cite{Br}, we work with left
crossed modules and right torsors throughout. See \cite{Br}, p.416,
for the equivalence of both approaches. Thus references to results
from \cite{Br} below are, in fact, to their opposite-hand
versions.}.

\begin{definition} A (left) crossed module on $E$
consists of a homomorphism $\partial\e\colon F\ra G$ of groups of
$\widetilde{E}$ together with a left action of $G$ on $ F$, denoted
by $(g,f)\mapsto\!\phantom{.}^{g}\!\lbe f$, such that
$$
\partial(\!\phantom{.}^{g}\!\lbe f)=g\partial(f)\e g^{-1}
$$
and
$$
\!\phantom{.}^{\partial(\lbe f\lbe)}\! f^{\e\prime}=ff^{\e\prime}\e
f^{-1},
$$
for every $g\in G$ and $f, f^{\e\prime}\in F$.
\end{definition}

The definition immediately implies that $\krn\partial$ is central in
$ F$ and $\img\partial$ is normal in $G$. Further, $\partial$ is a
$G$-homomorphism for the given action of $G$ on $F$ and the left
action of $G$ on itself via inner automorphisms.

Crossed modules will be regarded as complexes of length two $\big(
F\!\overset{\partial}\ra\be G\e\big)$, with $F$ and $G$ placed in
degrees $-1$ and $0$, respectively.

\noindent\begin{examples}\indent
\begin{enumerate}
\item[(i)] If $\partial\colon\!  F\ra G$ is a
homomorphism of abelian groups of $\widetilde{E}$ and $ G$ acts
trivially on $F$, then $\big( F\!\overset{\partial}\ra\be G\e\big)$
is a crossed module. Such crossed modules are called {\it abelian}.
\item[(ii)] If $ G$ is a group of $\widetilde{E}$, $F$ is a
normal subgroup of $G$ and $G$ acts on
$F$ by conjugation, then $(F\hookrightarrow G)$ is a crossed module.
\item[(iii)] Let $S$ be a scheme and let $E$ be a standard
site on $S$, i.e., every representable presheaf on $E$ is a sheaf
and finite fibered products exist in $E$. Examples include the small
fppf and \'etale sites over $S$ \cite{SGA3}, Proposition
IV.6.3.1(iii). Let $G$ be a reductive $S$-group scheme with derived
group $G^{\der}$, let $\widetilde{G}$ be the simply-connected
central cover of $G^{\der}$ and let $\partial\e\colon\widetilde{
G}\ra G$ be the composition $\widetilde{ G}\twoheadrightarrow
G^{\der}\hookrightarrow G$. Then there exists a canonical
``conjugation" action of $G$ on $\widetilde{G}$ so that the complex
$\big(\widetilde{ G}\overset{\partial}\ra G\e\big)$ of representable
sheaves on $E$ is a crossed module on $E$. See \cite{Br2}, Example
1.9, p.28. Further, if $Z(G)$ denotes the center of $G$, then the
induced action of $Z(G)$ on $\widetilde{G}$ is trivial.
\end{enumerate}
\end{examples}

Let $(F\!\overset{\partial}\ra\be G)$ be a crossed module. For
$i=-1,0,1$, let $H^{\le i}(E, F\!\ra\be G)$ be the sets $H^{\le
i}(\widetilde{E}, F\!\ra\be \check{G}\e)$ defined in \cite{Br},
p.426, where $\check{G}$ is the symmetric of $G$ \cite{Gi},
Definition III.1.1.3, p.106. If $( F\ra G)$ is an {\it abelian}
crossed module (see Example 2.2(i)), the pointed sets $H^{\e i}(E,
F\!\ra\be G)$ coincide with the usual flat hypercohomology groups of
the complex of abelian groups $( F\ra G)$. Further, if $G$ is a
group of $\widetilde{E}$, then the pointed sets $H^{\e i}(E,1\ra
G)$, where $i=0,1$, agree with the usual cohomology sets $H^{\le
i}(E, G)$. See \cite{Br}, p.427, line 3. In particular, $H^{\e
1}(E,1\ra G)=H^{\le 1}(E, G)$ is the set of isomorphism classes of
right $G$-torsors on $E$. We have [op.cit.], (4.2.1),
\begin{equation}\label{-1}
H^{\e -1}\lbe\big(E, F\!\be\overset{\partial}\ra\be \be G\big)=H^{\e
0}(E,\krn\partial\e),
\end{equation}
which is an abelian group. Further, $H^{\e 0}(E, F\!\ra\be G)$ is
the set of isomorphism classes of ``$(F,G)$-torsors", i.e, pairs
$(Q,t)$ where $Q$ is an $F$-bitorsor and $t\colon Q\ra G$ is an
$F$-equivariant map for the right action of $F$ on $G$ via
$\partial$. This set is naturally equipped with a group structure
given by the contracted product of $F$-bitorsors. See \cite{Br},
p.432. In order to describe $H^{1}\be\big(E, F\!\be\ra\be \be
G\big)$, we first note that the crossed module $(F\!\be\ra\be G)$ is
functorially associated to the opposite $\mathcal C$ of the gr-stack
of $(F,G\e)$-torsors on $E$. See [op.cit.], Theorem 4.6, p.433.
Then, by [op.cit.], Theorem 6.2, p.440, $H^{1}\be\big(E,
F\!\be\ra\be \be G\big)$ is in natural bijection with the set
$H^{1}(\mathcal C\e)$ of equivalence classes of (right) $\mathcal
C$-torsors on $E$. The sets thus defined behave functorially with
respect to inverse images, i.e., if $u\colon E^{\e\prime}\ra E$ is a
morphism of sites and $i=0,1$, then there exist maps
\begin{equation}\label{pbck}
h^{i}(u,F\!\ra\be G)\colon H^{\e i}(E,F\!\ra\be G)\ra H^{\e
i}(E^{\e\prime},u^{\lbe *}F\!\ra\be u^{\lbe *}G\e)
\end{equation}
which coincide with those defined in \cite{Gi}, Proposition V.1.5.1,
p.316, when $(F\!\ra\be G)=(1\ra G)$. See \cite{Br}, \S6. For ease
of notation, we will sometimes write $H^{\le i}(G)$ and $H^{\e
i}(F\!\ra\be G)$ for $H^{\le i}(E, G)$ and $H^{\e i}(E, F\!\ra\be
G)$, respectively.

A morphism of crossed modules
$(F_{1}\!\overset{\partial_{1}}\longrightarrow\be
 G_{1})\ra( F_{2}\!\overset{\partial_{2}}\longrightarrow
\be G_{2})$ (see [op.cit.], p.416, for the definition) is called a
{\it quasi-isomorphism} if the induced group homomorphisms
$\krn\partial_{\e 1}\ra\krn
\partial_{\e 2}$ and
$\cok\partial_{\e 1}\ra\cok\partial_{\e 2}$ are isomorphisms. Since
quasi-isomorphic crossed modules define equivalent gr-stacks (cf.
\cite{SGA4}, XVIII, 1.4.12), a quasi-isomorphism of crossed modules
$( F_{1}\!\ra\be  G_{1})\ra( F_{2}\!\ra \be G_{2})$ induces
bijections
$$
H^{\e i}\lbe\big(E, F_{1}\!\be\ra\be\be
 G_{1}\le\big)\overset{\!\sim}\ra H^{\e
i}\lbe\big(E, F_{2}\!\be\ra\be\be  G_{2}\le\big),
$$
for $i=-1,0,1$.

\begin{examples}\noindent
\begin{enumerate}
\item[(i)] Let $\big( F\!\overset{\partial}\ra\be G\e\big)$
be a crossed module on $E$ such that $\partial$ is {\it injective}.
Then $\big( F\!\overset{\partial}\ra\be G\e\big)$ is quasi-isomorphic to
$(1\ra\cok\partial\e)$ and there exist canonical bijections
$$
H^{\e i}\lbe\big(E, F\!\overset{\partial}\ra\be
 G\e\big)\simeq H^{\e i}(E,\cok\partial\e)
$$
for $i=0,1$.

\item[(ii)] Let $\big( F\!\overset{\partial}\ra\be G\e\big)$
be a crossed module on $E$ such that $\partial$ is {\it surjective}.
Then there exists a quasi-isomorphism $(\krn\partial\!\ra\be 1)\ra
\big( F\!\overset{\partial}\ra\be G\e\big)$ which induces bijections
$$
H^{\e i}\lbe\big(E, F\!\overset{\partial}\ra\be
 G\e\big)\overset{\!\sim}\ra H^{\e
i+1}(E,\krn\partial\e),
$$
where $i=-1,0,1$. Note that $H^{\e 2}(E,\krn\partial\e)$ is the
usual second cohomology group of the abelian group $\krn\partial$ of
$\widetilde{E}$.
\end{enumerate}
\end{examples}

Let $(F\!\overset{\partial}\ra\be G)$ be a crossed module as above.
Then $\partial$ induces a map $\partial^{\e(1)}\colon H^{1}(F)\ra
H^{1}(G)$ which maps the class of an $F$-torsor $Q$ to the class of
the $G$-torsor $Q\be\wedge^{\be F}\be G$ \cite{Gi}, Proposition
III.1.3.6, p.116.

\begin{proposition} Let $(F\!\overset{\partial}\ra\be G)$
be a crossed module of $\,E$. Then there exists an exact sequence of
pointed sets
$$\begin{array}{rcl}
1&\ra &H^{\e -1}(F\!\ra\be G)\ra H^{\le 0}( F)\ra H^{\le 0}(
G)\overset{\psi_{\le 0}}\longrightarrow
H^{\e 0}( F\!\ra\be G)\\
&\overset{\delta_{\le 0}^{\e\prime}}\longrightarrow & H^{\le
1}(F)\overset{\partial^{\le(1)}}\longrightarrow H^{\le 1}(
G)\overset{\psi_{\le 1}}\longrightarrow H^{\e 1}( F\!\ra\be G),
\end{array}
$$
where the maps $\psi_{\e i}$, $i=0,1$, are induced by the embedding
of crossed modules $(1\ra G)\hookrightarrow( F\!\ra\be G)$ and the
map $\delta_{\le 0}^{\e\prime}$ is defined below.
\end{proposition}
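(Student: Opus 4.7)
My plan is to derive the asserted exact sequence from the short exact sequence of crossed modules
\[
1 \longrightarrow (1\to G) \longrightarrow \big(F\!\overset{\partial}\ra\be G\e\big) \longrightarrow (F\to 1) \longrightarrow 1
\]
together with the identifications furnished by Example 2.3. Specifically, Example 2.3(i) applied to the injection $1\hookrightarrow G$ gives $H^{\e i}(1\to G) \cong H^{\e i}(G)$ for $i=0,1$, and trivially $H^{-1}(1\to G)=1$; while Example 2.3(ii) applied to the surjection $F\twoheadrightarrow 1$ gives $H^{\e i}(F\to 1) \cong H^{\e i+1}(F)$ for $i=-1,0,1$. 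Substituting these isomorphisms into the long exact cohomology sequence associated to the triple above produces precisely the sequence in the statement.

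The first step is therefore to establish the long exact cohomology sequence associated to a short exact sequence of crossed modules. The cleanest route is via the equivalence between crossed modules and gr-stacks on $\widetilde{E}$ recalled above: the short exact sequence of crossed modules above corresponds to an exact sequence of gr-stacks, whose associated long exact cohomology sequence provides the desired one. Once this is in hand, I would identify the connecting maps concretely: $\psi_{\e 0}$ and $\psi_{\e 1}$ are the pullback maps (\ref{pbck}) along $(1\to G) \hookrightarrow (F\to G)$; $\partial^{\le (1)}$ is as defined in the paragraph preceding the statement; and the boundary map $\delta_{\le 0}^{\e\prime}$, under the identification $H^{\e 0}(F\to 1) \cong H^{\e 1}(F)$, sends the class of an $(F,G)$-torsor $(Q,t)$ to the class of $Q$ viewed merely as a right $F$-torsor (forgetting the left $F$-action and the map $t\e$).

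Finally I would verify exactness at each term. Exactness at $H^{\e -1}(F\to G)$ and at $H^{\e 0}(F)$ is immediate from the identification $H^{\e -1}(F\to G)=H^{\e 0}(\krn\partial\e)$ and the definition of $\krn\partial$. Exactness at $H^{\e 0}(G)$: one checks that $\psi_{\e 0}(g)$ is trivial iff the associated $(F,G)$-torsor admits a global $F$-equivariant trivialization, iff $g\in\partial(F(S))$. Exactness at $H^{\e 0}(F\to G)$: the underlying right $F$-torsor of $(Q,t)$ is trivial iff $(Q,t)$ itself lies in the image of $\psi_{\e 0}$, since picking a global section of $Q$ transports $t$ to an element of $G(S)$. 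Exactness at $H^{\e 1}(F)$: $Q\be\wedge^{\be F}\be G$ is trivial iff $Q$ admits an $F$-equivariant map to $G$, iff $Q$ extends to an $(F,G)$-torsor.

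The principal obstacle is exactness at $H^{\e 1}(G)$: that a $G$-torsor $R$ has trivial image in $H^{\e 1}(F\to G)$ precisely when $R\cong Q\be\wedge^{\be F}\be G$ for some $F$-torsor $Q$. This genuinely requires the gr-stack description of $H^{\e 1}(F\to G) = H^{\e 1}(\mathcal C\e)$ from \cite[Theorem 6.2]{Br}: the triviality of $R$ in $H^{\e 1}(\mathcal C\e)$ amounts to the existence of a global section of the associated $\mathcal C$-torsor, and unwinding the correspondence between $\mathcal C$-torsors and $(F,G)$-torsor data shows this is equivalent to a reduction of structure along $\partial$. The underlying cocycle verification is carried out essentially in \cite{Br}, so in practice this step, together with the establishment of the long exact sequence in the first step, could both be invoked from that reference with only minor bookkeeping adjustments.
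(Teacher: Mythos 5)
The paper's own proof is nothing more than a citation of \cite{Br}, (4.2.2), so the real question is whether your reconstruction would stand on its own. Most of it would, but the opening framing contains a genuine error: $(F\to 1)$ is not a crossed module unless $F$ is abelian. The second axiom of Definition 2.1 with trivial target forces $f'=ff'f^{-1}$ for all $f,f'\in F$ (this is why Example 2.3(ii) only ever produces $(\krn\partial\to 1)$, whose source is central in $F$ and hence abelian). Consequently the ``short exact sequence of crossed modules'' $1\to(1\to G)\to(F\to G)\to(F\to 1)\to 1$ does not exist for a general crossed module, and your proposed first step --- establishing a long exact cohomology sequence attached to a short exact sequence of crossed modules in this nonabelian, low-degree range and then specializing --- is not an available tool: producing exactly such a seven-term sequence is the content of Breen's (4.2.2) itself, so this step is essentially circular.

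The remainder of your argument does not actually depend on that scaffolding and is in substance Breen's own proof: describe the maps explicitly ($\psi_i$ induced by the embedding of crossed modules, $\delta_0'$ by forgetting the left $F$-action, $\partial^{(1)}$ by $Q\mapsto Q\wedge^{F}G$) and check exactness term by term. Two of your checks are thinner than they should be. At $H^{1}(F)$, showing that $\krn\partial^{(1)}$ is contained in the image of $\delta_0'$ requires promoting a right $F$-torsor $P$ equipped with an $F$-equivariant map $t\colon P\to G$ to an $F$-bitorsor; the left action must be manufactured from the $G$-action on $F$ transported along $t$, and verifying that this yields a well-defined left action commuting with the right one uses both crossed-module identities --- this is where the crossed-module hypothesis genuinely enters, and it deserves to be spelled out rather than asserted as ``$Q$ extends to an $(F,G)$-torsor.'' Exactness at $H^{1}(G)$ does, as you say, require the gr-stack description of $H^{1}(F\to G)$ from \cite{Br}, Theorem 6.2; deferring to that reference there is no worse than what the paper itself does.
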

\begin{proof} See \cite{Br}, (4.2.2).
\end{proof}

\begin{remarks}\indent
\begin{enumerate}
\item[(a)] The map $\delta_{\le 0}^{\e\prime}$ is defined as
follows. If a class $c\in H^{\e 0}(F\!\ra\be G)$ is represented by
an $(F,G)$-torsor $(Q,t)$, where $Q$ is an $F$-bitorsor, then
$\delta_{\le 0}^{\e\prime}(c)\in H^{\le 1}(F)$ is represented by $Q$
regarded only as a right $F$-torsor. See \cite{Br}, p.414, line -10.
\item[(b)] The map $\psi_{0}$ (which is denoted $\alpha$ in
\cite{Br}, (2.16.1), p.414) is a homomorphism of groups. See
[op.cit.], p.432. Thus it defines a right action of $H^{0}(G)$ on
$H^{\e 0}( F\!\ra\be G)$ and it follows without difficulty from (a)
and Proposition 2.4 that $\delta_{\le 0}^{\e\prime}$ induces an
injection
$$
H^{\e 0}( F\!\ra\be G)/H^{0}(G)\ra H^{\le 1}(F).
$$
\item[(c)] The group $H^{\e 0}(F\!\ra\be G)$ acts on the right on the set
$H^{\le 1}(F)$ as follows. If $p\in H^{\le 1}(F)$ is represented by
an $F$-torsor $P$ and $c\in H^{\e 0}(F\!\ra\be G)$ is represented by
an $(F,G)$-torsor $(Q,t)$, then $p\cdot c\in H^{\le 1}(F)$ is
represented by $P\be\wedge^{\be F}\be Q$. This action is compatible
with the map $\delta_{\le 0}^{\e\prime}$, i.e., $\delta_{\le
0}^{\e\prime}(c_{1}c_{2})=\delta_{\le 0}^{\e\prime}(c_{1})\cdot
c_{2}$ for all $c_{1},c_{2}\in H^{\e 0}(F\!\ra\be G)$. In
particular, the action is transitive if, and only if, $\delta_{\le
0}^{\e\prime}$ is surjective. In addition, the given action is
compatible with inverse images, i.e., if $u\colon E^{\e\prime}\ra E$
is a morphism of sites and $h^{i}(u,F\!\ra\be G)$ are the maps
\eqref{pbck}, then
$$
h^{1}(u,F\le)(p\cdot c\le)=h^{1}(u,F\le)(p)\cdot h^{0}(u,F\!\ra\be
G\le)(c)
$$
in $H^{1}(E^{\e\prime},u^{\lbe *}F\e)$. This follows from the fact
that $u^{\lbe *}(P\be\wedge^{\be F}\! Q)\simeq u^{\lbe
*}P\be\wedge^{\be u^{\lbe *}\be F}\! u^{\lbe *}Q\,$ \cite{Gi},
p.316, line -4.
\end{enumerate}
\end{remarks}

\section{Quasi-abelian crossed modules}

If $A$ is a group of $\widetilde{E}$, $H^{\e 2}(E, A)$ will denote
the second cohomology set of $A$ defined in \cite{Gi}, Definition
IV.3.1.3, p.247\,\footnote{For an excellent summary of Giraud's
theory, see \cite{DD}, \S1.2, and \cite{Deb}, \S1.}. It contains a
distinguished element $\varepsilon_{\be A}$, namely the class of the
gerbe $\text{TORS}(A)$ of $A$-torsors on $E$, which is called the
{\it unit class}. This class is contained in a subset $H^{\e 2}(E,
A\e)^{\e\prime}\subset H^{\e 2}(E, A\e)$ of {\it neutral classes}
[op.cit.], Definition IV.3.1.1, p.247. For convenience, we will
sometimes write $H^{\e 2}( A\e)$ and $H^{\e 2}( A\e)^{\e\prime}$ for
$H^{\e 2}(E, A\e)$ and $H^{\e 2}(E, A\e)^{\e\prime}$, respectively.
Both $H^{\e 2}( A\e)$ and $H^{\e 2}(A\e)^{\e\prime}$ will be
regarded as pointed sets with basepoint $\varepsilon_{\be A}$.

Let $(F\!\overset{\partial}\ra\be G)$ be a crossed module on $E$
such that $G=\img\partial\cdot{\rm{Cent}}_{G} (\img\partial\e)$ and
the induced action of $Z(G)$ on $Z(F)$ is trivial. By restricting
$\partial$ to $Z(F)$, we obtain a map $\partial^{\,\prime}\colon Z(
F)\ra Z(\img\partial\e)$. On the other hand, the equality $
G=\img\partial\,{\rm{Cent}}_{ G}(\img\partial\e)$ implies that
$Z(\img\partial\e)=\img\partial\cap Z( G)\,$. Thus $\partial$
induces a map $\partial_{Z}\colon Z( F)\ra Z( G)$, namely the
composite $Z( F)\overset{\partial^{\e\prime}}\ra
Z(\img\partial\e)\hookrightarrow Z( G)$, and $(Z(
F)\!\overset{\,\partial_{\lbe Z}}\longrightarrow\be Z(G))$ is an
abelian crossed module (see Example 2.2(i)). Further, there exists
an embedding of crossed modules
\begin{equation}\label{emb}
(Z( F)\!\overset{\,\partial_{\lbe Z}}\longrightarrow\be Z(
G))\hookrightarrow ( F\!\overset{\partial}\longrightarrow\be G).
\end{equation}
Note that, since $\krn\partial\subset Z( F)$, we have
$\krn\partial_{Z}=\krn\partial^{\,\prime}=\krn\partial$.

\begin{definition} Let $(F\!\overset{\partial}\ra\be G)$ be a crossed module
on $E$ such that $ G=\img\partial\,{\rm{Cent}}_{ G}(\img\partial\e)$
and $Z(G)$ acts trivially on $Z(F)$. Let $i\geq -1$ be an integer.
The $i$-th \textit{abelian cohomology group of $(F\!\ra\be G)$} is
by definition the hypercohomology group
$$
H^{\le i}_{\rm{ab}}(E, F\!\ra\be G)={\bh}^{\e i}(E,Z(
F)\!\overset{\,\partial_{Z}}\longrightarrow\be Z( G)),
$$
where $\partial_{Z}$ is as defined above.
\end{definition}

For $i=0,1$, the embedding of crossed modules \eqref{emb} defines
maps
\begin{equation}\label{bij}
\varphi_{i}\colon H^{\le i}_{\rm{ab}}(E, F\!\ra\be G)\ra H^{\le
i}(E, F\!\ra\be G).
\end{equation}
Further, the short exact sequence of complexes
\begin{equation}\label{zseq}
0\ra (0\ra
Z(G))\overset{j}\ra(Z(F)\overset{\partial_{Z}}\longrightarrow
Z(G))\overset{\pi}\ra(Z(F)\ra 0)\ra 0
\end{equation}
induces an exact sequence of abelian groups
\begin{equation}\label{long}
\dots\ra H^{\e i}(Z(G))\overset{j^{(i)}}\longrightarrow H^{\le
i}_{\rm{ab}}(F\!\ra\be G)\overset{\pi^{(i)}}\longrightarrow H^{\e
i+1}(Z(F))\overset{\partial_{\lbe Z}^{(i+1)}}\longrightarrow H^{\e
i+1}(Z(G))\ra\dots.
\end{equation}

\begin{definition} A crossed module $(F\!\overset{\partial}\ra\be G)$
on $E$ is called {\it quasi-abelian} if the following conditions
hold:
\begin{enumerate}
\item[(i)] the induced action of $Z(G)$ on $F$ is trivial\footnote{I thank M.Borovoi
for pointing out the need to assume that $Z(G)$ acts trivially on
all of $F$. If this is not the case, then certain desirable
properties need not hold.},
\item[(ii)] $G=(\img\partial\e)\be\cdot\be Z(G)$, and
\item[(iii)] the map $\partial^{\,\prime}\colon
Z(F)\ra Z(\img\partial\e)$ induced by $\partial$ is surjective.
\end{enumerate}
\end{definition}

Clearly, an abelian crossed module is quasi-abelian.

\begin{example} The crossed module
$(\widetilde{G}\!\ra\be G)$ considered in Example 2.2(iii) is a
quasi-abelian crossed module on $S_{\e\rm{fl}}$ (but perhaps not on
$S_{\e\rm{\acute{e}t}}$ if $\mu$ is non-smooth). Indeed, the induced
action of $Z(G)$ on $\widetilde{G}$ is trivial, $G=G^{\der}Z(G)$ by
\cite{SGA3}, XXII, 6.2.3, and $\partial^{\,\prime}\colon
Z\big(\widetilde{ G}\e\big)=\widetilde{
G}\times_{G^{\lbe\lbe\der}}\lbe\lbe Z( G^{\der})\ra Z( G^{\der})$ is
a surjective map of fppf sheaves.
\end{example}

Definition 3.2 is motivated by

\begin{proposition} Let $( F\!\be\ra\be G)$ be a quasi-abelian
crossed module on $E$. Then  the embedding of crossed modules
\eqref{emb} is a quasi-isomorphism.
\end{proposition}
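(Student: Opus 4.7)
My plan is to verify the two conditions defining a quasi-isomorphism: that the induced maps on kernels and on cokernels of $\partial_{Z}$ and $\partial$ are isomorphisms.

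For the kernels, I would use that $\krn\partial$ is central in $F$ (as noted just after Definition 2.1), so $\krn\partial\subseteq Z(F)$. Hence $\krn\partial^{\,\prime}=Z(F)\cap\krn\partial=\krn\partial$, and since $Z(\img\partial)\hookrightarrow Z(G)$ is injective, $\krn\partial_{Z}=\krn\partial^{\,\prime}=\krn\partial$. This identification is, in fact, already recorded in the paragraph preceding Definition 3.2.

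For the cokernels, I would first observe that conditions (i) and (ii) of quasi-abelianness place us in the setup preceding Definition 3.1. Indeed, (i) combined with the crossed-module identity $\partial(\!\phantom{.}^{g}\!\lbe f)=g\,\partial(f)\,g^{-1}$ shows that every $g\in Z(G)$ commutes with every element of $\img\partial$, so $Z(G)\subseteq\text{Cent}_{G}(\img\partial)$; together with (ii) this yields $G=\img\partial\cdot\text{Cent}_{G}(\img\partial)$, whence $Z(\img\partial)=\img\partial\cap Z(G)$ as in that same paragraph. Condition (iii) then gives
$$
\img\partial_{Z}=\img\lbe\big(Z(F)\to Z(G)\big)=Z(\img\partial)=\img\partial\cap Z(G).
$$
Applying the second isomorphism theorem internally in the topos $\widetilde{E}$, the inclusion $Z(G)\hookrightarrow G$ induces a canonical isomorphism
$$
\cok\partial_{Z}=Z(G)/(Z(G)\cap\img\partial)\overset{\!\sim}\lra(\img\partial\cdot Z(G))/\img\partial=G/\img\partial=\cok\partial,
$$
which is precisely the map induced by the embedding \eqref{emb}.

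The only real subtlety is bookkeeping: the product in (ii), the surjectivity in (iii), and all quotients must be interpreted as operations on sheaves of groups in $\widetilde{E}$ rather than pointwise. Since the classical isomorphism theorems for groups are internal statements valid in any topos, there is no genuine obstacle beyond keeping sheaf-theoretic images carefully distinguished from set-theoretic ones.
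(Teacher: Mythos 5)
Your proof is correct and follows essentially the same route as the paper's: the kernel identification $\krn\partial_{Z}=\krn\partial^{\,\prime}=\krn\partial$ is taken verbatim from the discussion preceding Definition 3.2, and your second-isomorphism-theorem computation of $\cok\partial_{Z}\overset{\sim}\ra\cok\partial$ is exactly the content of the paper's short exact sequence $0\ra\cok\partial^{\e\prime}\ra\cok\partial_{Z}\ra\cok\partial\ra 0$ together with the vanishing of $\cok\partial^{\e\prime}$ from condition (iii). The only cosmetic difference is that the paper packages the injectivity and surjectivity of the cokernel map into that exact sequence rather than invoking the isomorphism theorem directly.
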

\begin{proof} Clearly, conditions (i) and (ii) of Definition 3.2
imply that \eqref{emb} is defined.
Further, since $Z(\img\partial\e)=\img\partial\e\cap\e Z( G)$,
there exists a canonical exact sequence
$$
0\ra\cok\partial^{\e\prime}\ra\cok\partial_{Z}\overset{f}\ra
\cok\partial\ra 0,
$$
where the map $f$ is induced by \eqref{emb}. Now, since
$\cok\partial^{\e\prime}=0$ by condition (iii) of Definition 3.2,
$f$ is an isomorphism. Since $\krn\partial_{Z}=\krn\partial$ as
noted above, the proof is complete.
\end{proof}

\begin{remark} Conditions (i) and (ii) of Definition 3.2 imply that
$(Z(F)\!\be\ra\be Z(G))$ coincides with the {\it center} of $(
F\!\be\ra\be G)$, as defined in \cite{GL}, p.171. Thus, the
following is a seemingly more general version of Definition 3.2: a
crossed module is quasi-abelian if it is quasi-isomorphic to its
center.
\end{remark}

\begin{corollary} Let $(F\!\be\ra\be G)$ be a quasi-abelian
crossed module on $E$. Then the maps \eqref{bij} are bijections.\qed
\end{corollary}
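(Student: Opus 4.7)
The plan is to derive the corollary immediately from Proposition 3.5 together with the functoriality principle recorded in Section 2 (just before Examples 2.3): a quasi-isomorphism of crossed modules induces bijections on $H^{\e i}$ for $i=-1,0,1$, because the associated gr-stacks are equivalent (cf.\ \cite{SGA4}, XVIII, 1.4.12).

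Concretely, I would argue as follows. By Proposition 3.5, the embedding \eqref{emb}, namely $(Z(F)\!\overset{\partial_{Z}}\longrightarrow\be Z(G))\hookrightarrow(F\!\overset{\partial}\longrightarrow\be G)$, is a quasi-isomorphism. Applying the functoriality principle, it induces bijections
$$
H^{\le i}\lbe\big(E, Z(F)\!\ra\be Z(G)\big)\overset{\!\sim}\ra H^{\le i}\lbe\big(E, F\!\ra\be G\big)
$$
for $i=0,1$; by the very construction of $\varphi_{i}$ in \eqref{bij}, these bijections coincide with the maps $\varphi_{i}$. To identify the left-hand side with $H^{\le i}_{\rm{ab}}(E, F\!\ra\be G)$, I would then observe that $(Z(F)\!\ra\be Z(G))$ is an abelian crossed module in the sense of Example 2.2(i): both groups are abelian, and by condition (i) of Definition 3.2 the action of $Z(G)$ on $Z(F)\subseteq F$ is trivial. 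Hence the nonabelian pointed set $H^{\le i}(E, Z(F)\!\ra\be Z(G))$ coincides with the fppf hypercohomology group $\bh^{\e i}(E, Z(F)\!\ra\be Z(G))$, which by Definition 3.1 is exactly $H^{\le i}_{\rm{ab}}(E, F\!\ra\be G)$.

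There is no genuine obstacle: once Proposition 3.5 is in hand, the corollary is a purely formal consequence of the equivalence of gr-stacks attached to quasi-isomorphic crossed modules. The only detail worth checking is that the bijections produced by the general functoriality statement really agree with the maps $\varphi_{i}$ of \eqref{bij}, but this is immediate since both are induced by applying the functor $H^{\le i}(E,-)$ to the embedding \eqref{emb}.
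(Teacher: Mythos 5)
Your proof is correct and follows exactly the route the paper intends: the corollary is stated with no proof precisely because it is the immediate combination of Proposition 3.5 with the principle (recorded in Section 2) that a quasi-isomorphism of crossed modules induces bijections on $H^{\le i}$ for $i=-1,0,1$, together with the identification of $H^{\le i}(E,Z(F)\!\ra\be Z(G))$ with the hypercohomology of the abelian crossed module. Your extra care in checking that the general bijections agree with the maps $\varphi_{i}$ of \eqref{bij} is appropriate but, as you note, immediate.
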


When $(F\!\be\ra\be G)$ is quasi-abelian, there exists a rather
useful variant of \eqref{long}. Namely, the short exact sequence of
complexes
$$
0\ra (Z(F)\overset{\partial^{\e\prime}}\twoheadrightarrow
Z(\img\partial\e))\ra(Z(F)\overset{\partial_{Z}}\longrightarrow
Z(G))\ra(0\ra\cok\partial\e)\ra 0
$$
induces an exact sequence of abelian groups
\begin{equation}\label{kamb}
\dots\ra H^{\e i-1}(\cok\partial\e)\ra H^{\le
i+1}(\krn\partial\e)\ra H^{\e i}_{\rm{ab}}(F\!\ra\be
G)\overset{t^{(i)}_{\rm{ab}}}\longrightarrow H^{\e
i}(\cok\partial\e)\ra\dots.
\end{equation}
The map $t^{(i)}_{\rm{ab}}$ was first considered in \cite{Bor2},
\S6.1, when $i=2$. We will write $c^{(i)}\colon H^{\le i}(Z(G))\ra
H^{\le i}(\cok\partial\e)$ for the canonical map induced by the
projection $Z(G)\ra\cok\partial_{Z}=\cok\partial$. Then $c^{(i)}$
factors as
\begin{equation}\label{kamb2}
H^{\le i}(Z(G))\overset{j^{(i)}}\longrightarrow H^{\e
i}_{\rm{ab}}(F\!\ra\be G)\overset{t^{(i)}_{\rm{ab}}}\longrightarrow
H^{\e i}(\cok\partial\e),
\end{equation}
where $j^{(i)}$ is the map appearing in \eqref{long}.

Next, if $(F\!\overset{\partial}\ra\be G)$ is a quasi-abelian
crossed module on $E$, there exists an exact commutative diagram
$$
\xymatrix{ 1\ar[r] & \krn\partial^{\,\prime}\ar[r]\ar@{=}[d] &
Z(F)\ar[r]^{\partial^{\,\prime}}\ar@{^{(}->}[d] & Z(\img\partial\e)\ar[r]
\ar@{^{(}->}[d] & 1\\
1\ar[r] & \krn\partial\ar[r] & F\ar[r]^{\partial} &
\img\partial\ar[r] & 1.}
$$
Consequently, $\partial$ induces an isomorphism
$$
F/Z(F)=\text{Inn}(F)\simeq\img\partial/Z(\img\partial\e)=
\img\partial/\e\img\partial\e\cap\e
Z(G).
$$
On the other hand, by Definition 3.2(ii),
$$
\img\partial/\e\img\partial\e\cap\e Z(G)\simeq\img\partial\e
Z(G)/Z(G)=G/Z(G)=\text{Inn}(G).
$$
We conclude that $\partial$ induces an isomorphism
$\overline{\partial}\e\colon
\text{Inn}(F)\overset{\sim}\ra\text{Inn}(G)$. Now, by \cite{Gi},
Proposition IV.4.2.12(iii), p.285, the exact commutative diagram
$$
\xymatrix{ 1\ar[r] & Z(F)\ar[r]\ar[d]^(.45){\partial_{Z}} &
F\ar[r]\ar[d]^(.45){\partial} & \text{Inn}(F)\ar[r]
\ar[d]_{\simeq}^(.45){\overline{\partial}}& 1\\
1\ar[r] & Z(G)\ar[r] & G\ar[r]& \text{Inn}(G)\ar[r] & 1}
$$
induces an exact commutative diagram
\begin{equation}\label{big}
\xymatrix{ H^{\le 1}({\rm{Inn}}(F))\ar[r]^{d_{\lbe
F}}\ar[d]_{\simeq}^(.45){\overline{\partial}^{\e(1)}} & H^{\e
2}(Z(F))\ar[d]^(.45){\partial_{Z}^{(2)}}\\
H^{\le 1}({\rm{Inn}}(G))\ar[r]^{d_{G}} & H^{\e
2}(Z(G)),}
\end{equation}
where $d_{F}$ and $d_{G}$ are the second coboundary maps of
[op.cit.], IV.4.2.2, p.280, and the left-hand vertical map is a
bijection by [op.cit.], IV.3.1.6.2, p.250. By [op.cit.], Proposition
IV.5.2.8, p.300, the map $d_{\le G}$ (respectively, $d_{\le F}$) may
be described as follows. Let $p\in H^{\le 1}({\rm{Inn}}(G))$, choose
an ${\rm{Inn}}(G)$-torsor $P$ representing $p$ and let
$(H,u\colon\text{lien}(H)\overset{\sim} \ra\text{lien}(G))$ be the
representative of $\text{lien}(G)$ defined by $P$. Thus $H$ is the
twist of $G$ by $P$, where ${\rm{Inn}}(G)$ acts on $G$ in the
natural way, and $u$ is the isomorphism defined in \cite{Gi}, proof
of Corollary IV.1.1.7.3, p.188, lines 6-8. Then $d_{\le G}(p)$ is
the class of the $Z(G)$-gerbe $\text{BITORS}\e(H,G)(u)$ of
$H$-$G$-bitorsors $Q$ on $E$ such that the isomorphism $\pi(Q)$ of
[op.cit.], Corollary IV.5.2.6, p.298, equals $u^{-1}$.

Now, by [op.cit.], Proposition IV.4.2.8(i), p.283\footnote{When
applying this proposition recall that $H^{2}(Z(G))^{\e\prime}=\{0\}$
by [op.cit.], IV.3.3.2.2, p.257.} (see also [op.cit.], Remark
IV.4.2.10, p.284\footnote{Note that the exact sequence appearing in
[loc.cit.] contains an unfortunate misprint: the ``map" $a^{(2)}$
appearing there is only a relation (as in [op.cit.], Definition
IV.3.1.4, p.248), even when $A$ is central in $B$.}), and
Proposition IV.3.2.6, p.255, there exist exact sequences of pointed
sets
\begin{equation}\label{bn0}
H^{\le 1}(G)\overset{b^{\e(1)}_{\lbe G}}\longrightarrow H^{\le
1}({\rm{Inn}}(G)) \overset{d_{G}}\longrightarrow H^{\e 2}(Z(G))
\end{equation}
and
\begin{equation}\label{bn}
H^{\le 1}(G)\overset{b^{\e(1)}_{\lbe G}}\longrightarrow H^{\le
1}({\rm{Inn}}(G)) \overset{n_{G}}\longrightarrow H^{\e
2}(G)^{\e\prime}\ra 1,
\end{equation}
where $b_{G}\colon G\ra {\rm{Inn}}(G)$ is the canonical map and the
map $n_{G}$ is defined as follows: if $p$ and $(H,u)$ are as above,
then $n_{G}(p)\in H^{\e 2}(G)^{\e\prime}$ is the class of the gerbe
$\text{TORS}\e(H)$ of $H$-torsors on $E$, which is a $G$-gerbe via
$u$.

Next, condition (ii) of Definition 3.2 implies, in fact, that
${\rm{Cent}}_{ G}(\img\partial\e)=Z(G)$. We conclude that the
morphism of liens $\text{lien}(\partial\e)\colon
\text{lien}(F)\ra\text{lien}(G)$ satisfies the hypotheses of
\cite{Gi}, Proposition IV.3.1.5, p.249. Consequently, $\partial$
induces a map $\partial^{\e(2)}\colon H^{2}(F)\ra H^{2}(G)$ which
maps $H^{2}(F)^{\e\prime}$ into $H^{2}(G)^{\e\prime}$. On the other
hand, by [op.cit.], Theorem IV.3.3.3(i), p.257, there exists a
simply-transitive action of $H^{\e 2}(Z(G))$ on $H^{\e 2}(G)$ given
by the map
\begin{equation}\label{act}
H^{\e 2}(Z(G))\times H^{\e 2}(G)\ra H^{\e 2}(G), \quad (x,r)
\mapsto x\cdot r,
\end{equation}
where $x\cdot r$ is the class of the contracted product
$X\overset{Z}\wedge R$, where $Z=Z(G)$ and $X$ and $R$ are
representatives of $x$ and $r$, respectively. By
[op.cit.], Corollary IV.3.3.4(ii), p.258, the map $\partial^{(2)}$
is compatible with $\partial_{Z}^{(2)}$ and the actions \eqref{act},
i.e, the following diagram commutes
\begin{equation}\label{comp}
\xymatrix{ H^{\e 2}(Z(F))\times H^{\e 2}(F)
\ar[d]^{\big(\partial_{Z}^{(2)}\be,\,\partial^{(2)}\big)}
\ar[r]& H^{\e
2}(F)\ar[d]^{\partial^{(2)}}\\
H^{\e 2}(Z(G))\times H^{\e 2}(G)
\ar[r]& H^{\e 2}(G).}
\end{equation}

\begin{proposition} Let $d_{\le G}$ and $n_{\le G}$ be the maps appearing
in \eqref{big} and \eqref{bn}, respectively. Then, for every $p\in
H^{\le 1}({\rm{Inn}}(G))$,
$$
n_{G}(p)=d_{\le G}(p)\cdot\varepsilon_{\lbe G}.
$$
\end{proposition}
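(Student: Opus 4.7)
The plan is to exhibit an explicit equivalence of $G$-gerbes between $\text{TORS}(H)$, viewed as a $G$-gerbe via $u$ (which represents $n_G(p)$), and the contracted product $\text{BITORS}(H,G)(u)\overset{Z}\wedge\text{TORS}(G)$ with $Z = Z(G)$, which by Giraud's description \eqref{act} of the action represents $d_G(p)\cdot\varepsilon_{\lbe G}$. Since both sides are $G$-gerbes on $E$, any 1-morphism of $G$-gerbes between them is automatically an equivalence, so it suffices to exhibit such a morphism.

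The natural candidate is the functor $\Phi$ sending a pair $(Q,T)$, where $Q$ is an $H$-$G$-bitorsor with $\pi(Q) = u^{-1}$ and $T$ is a right $G$-torsor, to the contracted product $Q\wedge^{G} T$, regarded as an $H$-torsor via the residual left $H$-action inherited from $Q$. I would first check that $\Phi$ descends from the product $\text{BITORS}(H,G)(u)\times\text{TORS}(G)$ to the contracted product over $Z$. This is immediate: for $z \in Z$, the defining relation $(q\cdot z)\otimes t = q\otimes(z\cdot t)$ in $Q\wedge^{G} T$ is precisely the $Z$-identification used in forming the contracted product of gerbes, so the natural 2-isomorphism $\Phi(Q\cdot z,T)\simeq \Phi(Q,z\cdot T)$ is present.

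The main step is to verify that the descended functor $\Phi$ is a morphism of $G$-gerbes, i.e., that it intertwines the two $\text{lien}(G)$-structures. The lien of the contracted product is $\text{lien}(G)$, obtained from the lien of $\text{TORS}(G)$ via the trivial outer $Z$-action, while the lien of $\text{TORS}(H)$ is $\text{lien}(H)$, identified with $\text{lien}(G)$ through $u$. The compatibility reduces to checking that the action of a local section of $\text{lien}(G)$ on $Q\wedge^{G}T$ via the $G$-action on $T$ coincides with its action via the $H$-lien structure on $Q\wedge^{G}T$ transported through $u^{-1}$; this is exactly the content of the condition $\pi(Q) = u^{-1}$ together with the description of $\pi(Q)$ recalled from \cite{Gi}, Corollary IV.5.2.6. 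I expect this lien-compatibility verification, unwinding Giraud's formalism both for $\pi(Q)$ and for the lien of a contracted product of gerbes, to be the main obstacle; once it is done, the equality $n_{G}(p) = d_{\le G}(p)\cdot\varepsilon_{\lbe G}$ follows at once from the fact that $\Phi$ is an equivalence.
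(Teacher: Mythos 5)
Your proposal is correct in substance but takes a genuinely more hands-on route than the paper. The paper never manipulates the contracted product of gerbes directly: it invokes the uniqueness clause of Giraud's Theorem IV.3.3.3(ii), which identifies the unique $x\in H^{2}(Z(G))$ with $n_{G}(p)=x\cdot\varepsilon_{G}$ as the class of the $Z(G)$-gerbe $\text{HOM}_{G}(\text{TORS}(G),\text{TORS}(H))$, and then reduces the Proposition to the single quotable fact that this gerbe is $Z(G)$-equivalent to $\text{BITORS}(H,G)(u)$ (Giraud, Proposition IV.5.2.5(iii)). You instead construct an explicit $G$-morphism from $\text{BITORS}(H,G)(u)\wedge^{Z}\text{TORS}(G)$ (with $Z=Z(G)$) to $\text{TORS}(H)$ and appeal to the fact that a morphism of gerbes bound by the identity of the lien is an equivalence. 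Both arguments turn on the same essential input --- the dictionary between $(H,G)$-bitorsors $Q$ with $\pi(Q)=u^{-1}$ and $G$-equivalences $\text{TORS}(G)\to\text{TORS}(H)$ over $u$ --- and you have correctly isolated $\pi(Q)=u^{-1}$ as the condition driving the lien compatibility. What the paper's packaging buys is precisely the right never to unwind the contracted product of gerbes, which is where your write-up is loosest: that contracted product is not obtained by identifying objects $(Q\cdot z,T)\sim(Q,z\cdot T)$ as for torsors, but is the gerbe associated to the prestack with the objects of the product and with morphism sheaves divided by the antidiagonal copy of $Z(G)$ in $Z(G)\times G$; so the descent of your functor $\Phi$ must be verified on automorphism sheaves (your displayed relation in the contracted product of torsors is exactly what shows the antidiagonal $Z(G)$ acts trivially, so the idea is right, but it should be stated at that level), and the handedness needs fixing, since for a right $G$-torsor $T$ the contraction should read $T\wedge^{G}Q^{\,\rm{o}}$, or $Q\wedge^{G}T$ only after converting $T$ to a left torsor. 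These are bookkeeping matters rather than gaps; once they are attended to, your direct construction is a legitimate, self-contained alternative to the paper's two-citation argument.
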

\begin{proof} As noted above, $n_{G}(p)$ and $\varepsilon_{\lbe G}$
are represented by $\text{T}H:=\text{TORS}\e(H)$ and
$\text{T}G:=\text{TORS}\e(G)$, respectively. On the other hand,
$d_{\le G}(p)$ is represented by the $Z(G)$-gerbe
$\text{BITORS}(H,G)(u)$. Now, by \cite{Gi}, Theorem IV.3.3.3(ii),
p.257, the unique element $x\in H^{2}(Z(G))$ such that $n_{\le
G}(p)=x\cdot\varepsilon_{\lbe G}$ is represented by the $Z(G)$-gerbe
$\text{HOM}_{G}(\text{T}G,\text{T}H)$. Thus, it suffices to check
that there exists a $Z(G)$-equivalence of $Z(G)$-gerbes
$\text{HOM}_{G}(\text{T}G,\text{T}H)\simeq\text{BITORS}(H,G)(u)$.
This follows from \cite{Gi}, Proposition IV.5.2.5(iii), p.297.
\end{proof}

The proposition has the following corollary, previously noted in
\cite{DD}, p.584.

\begin{corollary} The image of $\e d_{G}\colon H^{1}({\rm{Inn}}(G)) \ra H^{2}(Z(G))$
is the set of all elements $x\in H^{2}(Z(G))$ such that
$x\cdot\varepsilon_{\lbe G}\in H^{2}(G)$ is neutral. In particular,
$d_{G}$ is surjective if, and only if, every class of $H^{2}(G)$ is
neutral.
\end{corollary}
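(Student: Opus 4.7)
The plan is to derive the corollary immediately from Proposition 3.11 together with the exact sequence \eqref{bn} and the fact that the action \eqref{act} of $H^{2}(Z(G))$ on $H^{2}(G)$ is simply transitive.

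For the forward inclusion, I take $x\in\img d_{\lbe G}$ and choose $p\in H^{1}({\rm{Inn}}(G))$ with $d_{\lbe G}(p)=x$. Proposition 3.11 then gives
$$
x\cdot\varepsilon_{\lbe G}=d_{\lbe G}(p)\cdot\varepsilon_{\lbe G}=n_{G}(p),
$$
and $n_{G}$ takes values in $H^{2}(G)^{\e\prime}$ by definition, so $x\cdot\varepsilon_{\lbe G}$ is neutral.

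For the reverse inclusion, I start with $x\in H^{2}(Z(G))$ such that $x\cdot\varepsilon_{\lbe G}\in H^{2}(G)^{\e\prime}$. The exact sequence \eqref{bn} shows that $n_{G}$ is surjective onto $H^{2}(G)^{\e\prime}$, so there exists $p\in H^{1}({\rm{Inn}}(G))$ with $n_{G}(p)=x\cdot\varepsilon_{\lbe G}$. Invoking Proposition 3.11 once more, $d_{\lbe G}(p)\cdot\varepsilon_{\lbe G}=n_{G}(p)=x\cdot\varepsilon_{\lbe G}$, and simple transitivity of \eqref{act} forces $d_{\lbe G}(p)=x$, so $x\in\img d_{\lbe G}$.

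The ``in particular'' clause is then a one-line consequence: since the action \eqref{act} is simply transitive, every element of $H^{2}(G)$ has the form $x\cdot\varepsilon_{\lbe G}$ for a unique $x\in H^{2}(Z(G))$, and is neutral precisely when $x\in\img d_{\lbe G}$ by what was just shown. Hence $d_{\lbe G}$ is surjective iff every class of $H^{2}(G)$ is neutral. There is no real obstacle here; the only subtlety is keeping the two directions of the simply-transitive action straight and remembering to cite \eqref{bn} for the surjectivity of $n_{G}$ onto the neutral locus.
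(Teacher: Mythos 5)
Your argument is correct and is exactly the paper's proof, which simply cites the proposition establishing $n_{G}(p)=d_{\le G}(p)\cdot\varepsilon_{\lbe G}$ (Proposition 3.7 here, not 3.11), the simply-transitive action \eqref{act}, and the exactness of \eqref{bn}; you have merely written out the two inclusions that the paper leaves implicit. The only correction needed is the proposition number.
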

\begin{proof} This follows from the proposition, the existence of
\eqref{act} and the exactness of \eqref{bn}.
\end{proof}

Now, for $i=0,1$, we define the $i$-th {\it abelianization map}
\begin{equation}\label{abmaps}
{\rm{ab}}^{i}\colon H^{\le i}(G)\ra H^{\e i}_{\rm{ab}\,}
(F\!\ra\be G)
\end{equation}
as the composite
$$
H^{\le i}(G)\overset{\psi_{i}}\longrightarrow H^{\e i}(F\!\ra\be G)
\overset{\varphi_{i}^{-1}}\longrightarrow  H^{\e i}_{\rm{ab}\,}
(F\!\ra\be G),
$$
where $\psi_{i}$ is the map of Proposition 2.4 and $\varphi_{i}$ is
the bijection \eqref{bij}. Further, let $\delta_{\le 0}\colon H^{\e
0}_{\rm{ab}\,}(F\!\ra\be G)\ra H^{1}(F)$ be the composite
$$
H^{\e 0}_{\rm{ab}\,}(F\!\ra\be G)\overset{\varphi_{\le
0}}\longrightarrow H^{\e 0}(F\!\ra\be G)\overset{\delta_{\le
0}^{\e\prime}}\longrightarrow H^{1}(F),
$$
where $\delta_{\le 0}^{\e\prime}$ is the map described in Remark
2.5(a). Thus $\delta_{\le 0}$ maps the class of a
$(Z(F),Z(G))$-torsor $(Q,t)$ to the class of the $F$-torsor
$Q\be\wedge^{Z}\be F$, where $Z=Z(F)$.

\begin{remarks} \indent
\begin{enumerate}
\item[(a)] If $(F\!\overset{\partial}\ra\be G)$ is a quasi-abelian crossed
module on $\,E$ such that $\partial$ is {\it surjective}, then
$H^{\e 1}_{{\rm{ab}}}(F\!\ra\be G)$ can be identified with $H^{\e
2}(\krn\partial\e)$ (see Example 2.3(ii)). Under this
identification, ${\rm{ab}}^{1}$ corresponds to the coboundary map
$H^{\le 1}(G)\ra H^{\e 2}(\krn\partial\e)$ induced by the central
extension $1\ra\krn\partial\ra F\ra G\ra 1$.
\item[(b)] The right action of $H^{\e 0}(F\!\ra\be G)$ on
$H^{1}(F)$ described in Remark 2.5(c) induces, via $\varphi_{\le
0}$, a right action of $H^{\e 0}_{\rm{ab}\,}(F\!\ra\be G)$ on
$H^{1}(F)$ which can be described as follows. If $p\in H^{\le 1}(F)$
is represented by an $F$-torsor $P$ and $c\in H^{\e
0}_{\rm{ab}\,}(F\!\ra\be G)$ is represented by a
$(Z(F),Z(G))$-torsor $(Q,t)$, then $p\cdot c\in H^{\le 1}(F)$ is
represented by the $F$-torsor $P\be\wedge^{\be F}\be(Q\wedge^{\be
Z}\be F)\simeq P\wedge^{\be Z}\be Q$, where $Z=Z(F)$ (for the
isomorphism, see \cite{Gi}, III,  1.3.1.3, p.115, 1.3.5, p.116 and
2.4.5, p.149). As in Remark 2.5(c), the above action is compatible
with inverse images and with the map $\delta_{\le 0}$. In
particular, the given action is transitive if, and only if, the map
$\delta_{\le 0}$ is surjective.
\end{enumerate}
\end{remarks}

The following statement is immediate from Proposition 2.4 and the
definitions of ${\rm{ab}}^{i}$ and $\delta_{\le 0}$.

\begin{proposition} There exists an exact sequence of pointed sets
$$
\begin{array}{rcl}
1&\ra& H^{\e -1}(F\!\ra\be G)\ra H^{\le 0}(F)\ra H^{\le
0}(G)\overset{{\rm{ab}}^{0}}\longrightarrow H^{\e
0}_{{\rm{ab}}\,}( F\!\ra\be G)\overset{\delta_{\le 0}}\longrightarrow H^{\le 1}(F)\\
&\overset{\partial^{(1)}}\longrightarrow & H^{\le 1}(G)
\overset{{\rm{ab}}^{1}}\longrightarrow H^{\e 1}_{{\rm{ab}}\,}
(F\!\ra\be G).\qed
\end{array}
$$
\end{proposition}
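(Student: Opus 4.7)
The plan is simply to transport the exact sequence of Proposition~2.4 across the bijections $\varphi_{i}$ of \eqref{bij}. Since $(F\!\ra\be G)$ is quasi-abelian, Corollary~3.7 asserts that $\varphi_{0}$ and $\varphi_{1}$ are bijections of pointed sets, so they carry exact sequences to exact sequences.

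First I would write down the sequence of Proposition~2.4, namely
$$
1\ra H^{-1}(F\!\ra\be G)\ra H^{0}(F)\ra H^{0}(G)\overset{\psi_{0}}\lra H^{0}(F\!\ra\be G)\overset{\delta_{0}^{\prime}}\lra H^{1}(F)\overset{\partial^{(1)}}\lra H^{1}(G)\overset{\psi_{1}}\lra H^{1}(F\!\ra\be G),
$$
which is exact as pointed sets. Next I would invoke the very definitions of the abelianization maps and $\delta_{0}$: by construction $\psi_{i}=\varphi_{i}\circ{\rm{ab}}^{i}$ for $i=0,1$, and $\delta_{0}^{\e\prime}=\delta_{0}\circ\varphi_{0}^{-1}$. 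Substituting these identities and replacing each occurrence of $H^{i}(F\!\ra\be G)$ by $H^{i}_{\rm{ab}}(F\!\ra\be G)$ via the bijections $\varphi_{0}$ and $\varphi_{1}$ yields precisely the asserted sequence.

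It remains to check that exactness is preserved under this substitution. At the terms $H^{-1}(F\!\ra\be G)$, $H^{0}(F)$, $H^{0}(G)$, $H^{1}(F)$ and $H^{1}(G)$ nothing changes. At $H^{0}_{\rm{ab}}(F\!\ra\be G)$, exactness amounts to the equality $\img({\rm{ab}}^{0})=\krn(\delta_{0})$, which is the image under $\varphi_{0}^{-1}$ of the identity $\img(\psi_{0})=\krn(\delta_{0}^{\prime})$; similarly at $H^{1}(F)$ one transports $\img(\delta_{0}^{\prime})=\krn(\partial^{(1)})$ without change, and at $H^{1}(G)$ one applies $\varphi_{1}^{-1}$ to $\img(\partial^{(1)})=\krn(\psi_{1})$.

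There is no real obstacle here; the only point to be careful about is that Corollary~3.7 genuinely identifies the \emph{pointed} sets, so that basepoints and the notions of kernel used in Proposition~2.4 are respected by the $\varphi_{i}$. Once this bookkeeping is done the proposition is a direct formal consequence of Proposition~2.4.
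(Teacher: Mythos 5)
Your argument is correct and is exactly the paper's proof: the author states that the proposition is ``immediate from Proposition 2.4 and the definitions of ${\rm{ab}}^{i}$ and $\delta_{\le 0}$,'' which is precisely your transport of Breen's sequence across the bijections $\varphi_{0},\varphi_{1}$ using $\psi_{i}=\varphi_{i}\circ{\rm{ab}}^{i}$ and $\delta_{\le 0}^{\e\prime}=\delta_{\le 0}\circ\varphi_{0}^{-1}$. The only quibble is a citation slip: the bijectivity of the maps \eqref{bij} for a quasi-abelian crossed module is Corollary 3.6 in the paper, not Corollary 3.7.
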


\smallskip

\smallskip

We now discuss twisting. Let $P$ be a (right) $G$-torsor. For any
$G$-object $X$ of $E$, let $\!\phantom{.}^{P}\be\lbe X$ be the twist
of $X$ by $P$ \cite{Gi}, Proposition III.2.3.7, p.146. Now let
$\!\!\phantom{.}^{P}\be\lbe\partial\colon \!\phantom{.}^{P}\be\lbe
F\ra \!\phantom{.}^{P}\be\lbe G$ be the twist of $\partial$ by $P$
\cite{Gi}, III.2.3.3.1, p.142. Further, let $\theta_{\lbe P}\colon
H^{\le 1}(G\e)\overset{\sim}\ra H^{\le
1}\big(\!\phantom{.}^{P}\be\lbe G\e\big)$ be the bijection defined
in [op.cit.], Remark III.2.6.3, p.154. If $P^{\e\rm{o}}\!$ denotes
the $G$-torsor opposite to $P$ [op.cit.], III.1.5.5.2, p.122, and
$Q$ represents the class $q\in H^{\le 1}(G\e)$, then $\theta_{\lbe
P}(q)$ is represented by the $\!\phantom{.}^{P}\be\lbe G$-torsor
$Q\lbe\wedge^{\be G}\be P^{\e\rm{o}}$ [op.cit.], Proposition
III.2.6.1(i), p.153. Let $\!\phantom{.}^{P}\be\lbe{\rm{ab}}^{1}$
denote the composite
$$
H^{\le 1}\big(\!\phantom{.}^{P}\be\lbe
G\e\big)\overset{\!\!\!\phantom{.}^{P}\be\lbe
\psi_{_{1}}}\longrightarrow H^{\e 1}\big(\!\phantom{.}^{P}\be\lbe
F\!\ra\!\!\phantom{.}^{P}\be\lbe
G\e\big)\overset{\!\phantom{.}^{P}\be\lbe\varphi_{1}^{-1}}\longrightarrow
H^{\e 1}_{\rm{ab}} \big(\!\phantom{.}^{P}\be\lbe
F\!\ra\be\phantom{.}^{P}\be\lbe G\e\big)=H^{\e 1}_{\rm{ab}\,}
(F\!\ra\be G).
$$

\begin{proposition} Let $P$ be a $G$-torsor and let $p\in H^{\le 1}(G\e)$ be its
class. Then there exists a commutative diagram
$$
\xymatrix{H^{\le
1}(G\e)\ar[r]^(.43){{\rm{ab}}^{1}}\ar[d]^{\theta_{\lbe P}}& H^{\e
1}_{{\rm{ab}}}(F\!\ra\be G)\ar[d]^{r_{\!\lbe_{P}}}\\
H^{\le 1}\big(\be\!\phantom{.}^{P}\be\lbe
G\e\big)\ar[r]^(.43){\phantom{.}^{P}\be\lbe{\rm{ab}}^{1}}& H^{\e
1}_{{\rm{ab}}}(F\!\ra\be G),}
$$
where $r_{\!\be_{P}}$ is given by
$r_{\!\be_{P}}(x)=x-{\rm{ab}}^{1}(p)$ for $x\in H^{\e
1}_{{\rm{ab}}}(F\!\ra\be G)$.
\end{proposition}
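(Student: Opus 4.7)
The first step is to verify that the right-hand vertical map $r_{\!\lbe_{P}}$ makes sense, i.e., that $H^{\e 1}_{{\rm{ab}}}\big(\!\phantom{.}^{P}\be\lbe F\!\ra\be\!\!\phantom{.}^{P}\be\lbe G\e\big)$ is canonically identified with $H^{\e 1}_{{\rm{ab}}}(F\!\ra\be G)$. By Definition 3.2(i), $Z(G)$ acts trivially on $F$, so the given action of $G$ on $F$ factors through $G/Z(G)={\rm{Inn}}(G)$. Since inner automorphisms of a group act trivially on its center, $G$ acts trivially on $Z(F)$, and evidently also on $Z(G)$. Hence $\!\phantom{.}^{P}\be\lbe Z(F)=Z(F)$ and $\!\phantom{.}^{P}\be\lbe Z(G)=Z(G)$ canonically, yielding the identification $H^{\e 1}_{{\rm{ab}}}\big(\!\phantom{.}^{P}\be\lbe F\!\ra\be\!\!\phantom{.}^{P}\be\lbe G\e\big)=H^{\e 1}_{{\rm{ab}}}(F\!\ra\be G)$ tacitly used in the statement of the proposition.

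With this in hand, the commutativity of the diagram amounts to the identity $\!\phantom{.}^{P}\be\lbe{\rm{ab}}^{1}(\theta_{\lbe P}(q))={\rm{ab}}^{1}(q)-{\rm{ab}}^{1}(p)$ in $H^{\e 1}_{{\rm{ab}}}(F\!\ra\be G)$ for every $q\in H^{\le 1}(G)$, which I propose to establish by a direct local cocycle computation. Choose a cover $\{U_{i}\}$ of $E$ that simultaneously trivializes $P$ and a representative $Q$ of $q$, with transition cocycles $p_{ij},g_{ij}\in G(U_{ij})$. Refining the cover if necessary and using Definition 3.2(ii), write $p_{ij}=\partial(\pi_{ij})\e\zeta_{ij}$ and $g_{ij}=\partial(f_{ij})\e z_{ij}$ with $\pi_{ij},f_{ij}\in F(U_{ij})$ and $\zeta_{ij},z_{ij}\in Z(G)(U_{ij})$. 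The quasi-abelian hypothesis, together with Proposition 3.4 and the surjectivity of $\partial^{\e\prime}$ from Definition 3.2(iii), implies that the failures of $f_{ij}$ (respectively $\pi_{ij}$) to satisfy the cocycle condition modulo $\partial$ lie in $Z(F)$; combined with $z_{ij}$ (respectively $\zeta_{ij}$), they form hypercocycles of $(Z(F)\!\ra\be Z(G))$ representing ${\rm{ab}}^{1}(q)$ and ${\rm{ab}}^{1}(p)$, respectively.

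The torsor $Q\lbe\wedge^{\be G}\!P^{\e\rm{o}}$ representing $\theta_{\lbe P}(q)$ has transition cocycles given, up to the standard cochain adjustments inherent in the contracted-product construction, by the product $g_{ij}\e p_{ij}^{-1}$. Since $G$ acts trivially on both $Z(F)$ and $Z(G)$ (as noted above), these adjustments contribute only to the non-central part of the relevant local cochains and thus do not perturb the abelian hypercocycle class. A short calculation then shows that the hypercocycle representing $\!\phantom{.}^{P}\be\lbe{\rm{ab}}^{1}(\theta_{\lbe P}(q))$ is the componentwise difference of those representing ${\rm{ab}}^{1}(q)$ and ${\rm{ab}}^{1}(p)$, giving the desired identity. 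The main obstacle is precisely the bookkeeping of these cochain adjustments: the contracted-product formula for transition cocycles is only valid up to coboundaries depending on local trivializations, and verifying that these coboundaries leave the class in $H^{\e 1}_{{\rm{ab}}}(F\!\ra\be G)$ unchanged is the essential use of Definition 3.2(i).
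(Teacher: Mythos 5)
Your strategy is genuinely different from the paper's, which avoids cocycles altogether: Demarche's argument passes to the gr-stacks $\mathcal C$ and $\mathcal C^{\rm ab}$ associated to $(F\to G)$ and $(Z(F)\to Z(G))$, interpolates twisting bijections $\theta_{\mathcal P}$ and $\theta_{\mathcal P^{\rm ab}}$ between the columns ${\rm ab}^1$ and ${}^P{\rm ab}^1$, and gets commutativity from functoriality of twisting (\cite{Gi}, III.2.6.3.2) together with the extension of \cite{Gi}, Remark III.2.6.3, to commutative gr-stacks, which is what identifies $\theta_{\mathcal P^{\rm ab}}$ with $x\mapsto x-{\rm ab}^1(p)$. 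Your preliminary reduction --- that $G$ acts trivially on $Z(F)$ and on $Z(G)$, so that $H^{1}_{\rm ab}\big({}^{P}F\to{}^{P}G\big)$ is canonically $H^{1}_{\rm ab}(F\to G)$ --- is correct and worth making explicit, but your justification is off: the action of $G$ on $F$ factoring through ${\rm Inn}(G)$ does not make it an action by inner automorphisms \emph{of $F$}. What one actually uses is Definition 3.2(ii) to write a local section of $G$ as $\partial(f)z$, Definition 3.2(i) to make $z$ act trivially, and the second crossed-module axiom to make $\partial(f)$ act as conjugation by $f$, hence trivially on $Z(F)$.

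The genuine gap is that the step carrying all the content of the proposition is asserted, not performed: ``a short calculation then shows'' and ``the main obstacle is precisely the bookkeeping'' defer exactly the verification that the proposition consists of. Concretely, writing $g_{ij}=\partial(f_{ij})z_{ij}$ and $p_{ij}=\partial(\pi_{ij})\zeta_{ij}$, the degree-zero component of a hypercocycle for ${}^P{\rm ab}^1(\theta_P(q))$ is indeed $z_{ij}\zeta_{ij}^{-1}$ by centrality, but the degree $-1$ component is the failure of $f_{ij}\pi_{ij}^{-1}$ to satisfy the \emph{twisted} cocycle condition $h_{ij}\cdot{}^{p_{ij}}h_{jk}=h_{ik}$, in which the middle factor is conjugated by $\pi_{ij}$; one must actually check that this works out to the componentwise quotient $c_{ijk}\,d_{ijk}^{-1}$ of the $2$-cochains $c_{ijk}=f_{ij}f_{jk}f_{ik}^{-1}$ and $d_{ijk}=\pi_{ij}\pi_{jk}\pi_{ik}^{-1}$ attached to $q$ and $p$, and that the whole construction is independent of the choices of lifts, which change the data by a \emph{nonabelian} coboundary in Breen's sense. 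Your assertion that the adjustments ``contribute only to the non-central part'' is not an argument --- the entire question is whether the non-central contributions cancel after projection to $(Z(F)\to Z(G))$, and that is what must be computed. There is also a foundational point you pass over: Breen's $H^{1}(E,F\to G)$, and the maps $\psi_1$ and $\varphi_1^{-1}$, are defined via torsors under the associated gr-stack, and on an arbitrary site the \v{C}ech-cocycle description your computation relies on (local liftability of sections of $G$ to $F\times Z(G)$, refinement of covers) itself needs justification. Both difficulties are precisely what the paper's torsor-theoretic proof is designed to bypass.
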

\begin{proof} (C.Demarche) Let $\mathcal C$ be the gr-stack associated to
$(F\!\ra\be G)$, so that $H^{\le 1}(F\!\ra\be G\e)\simeq
H^{1}(\mathcal C)$, and let $\mathcal P$ be a $\mathcal C$-torsor
representing the class in $H^{1}(\mathcal C)$ which corresponds to
$\psi_{1}(p)\in H^{\le 1}(F\!\ra\be G\e)$. Similarly, let $\mathcal
C^{\rm{ab}}$ be the gr-stack associated to $(Z(F)\!\ra\be Z(G))$, so
that $H^{\le 1}(Z(F)\!\ra\be Z(G)\e)\simeq H^{\le 1}(\mathcal
C^{\rm{ab}})$, and let $\mathcal P^{\rm{ab}}$ be a $\mathcal
C^{\rm{ab}}$-torsor representing the class in $H^{\le 1}(\mathcal
C^{\rm{ab}})$ corresponding to
${\rm{ab}}^{1}(p)=\varphi_{1}^{-1}(\psi_{1}(p))\in H^{\le
1}(Z(F)\!\ra\be Z(G)\e)$. Consider the diagram
$$
\xymatrix{H^{\le 1}(G\e)\ar[d]^{\psi_{1}}\ar[r]^(.45){\theta_{\lbe
P}}& H^{\le 1}\big(\!\phantom{.}^{P}\be\lbe
G\,\big)\ar[d]^{\phantom{.}^{P}\be\lbe
\psi_{1}}\\
H^{\le 1}(F\!\ra\be G\e)=H^{\le 1}(\mathcal
C)\ar[d]^{\varphi_{1}^{-1}}\ar[r]^(.45){\theta_{\lbe \mathcal P}}&
H^{\e 1}\big(\!\phantom{.}^{P}\be\lbe
F\!\ra\!\!\phantom{.}^{P}\be\lbe G\e\big)=H^{\le
1}\big(\!\phantom{.}^{\mathcal P}\be\mathcal C\e\big)\ar[d]^{\phantom{.}^{P}
\be\lbe\varphi_{1}^{-1}}\\
H^{\le 1}(Z(F)\!\ra\be Z(G)\e)=H^{\le 1}(\mathcal
C^{\rm{ab}})\ar[d]^{=}\ar[r]^(.45){\theta_{\lbe \mathcal
P^{\rm{ab}}}}& H^{\le 1}\big(Z\big(\!\phantom{.}^{P}\be
F\big)\!\ra\be Z\big(\!\phantom{.}^{P}\be G\big)\e\big)=H^{\le
1}\big(\!\phantom{.}^{\mathcal P^{\rm{ab}}}\be\mathcal C^{\rm{ab}}\big)\ar[d]^{=}\\
H^{\le 1}(Z(F)\!\ra\be Z(G)\e)\ar[r]^(.44){r_{\!_{P}}}&H^{\le
1}(Z(F)\!\ra\be Z(G)\e).}
$$
The left- and right-hand vertical compositions equal ${\rm{ab}}^{1}$
and $\!\phantom{.}^{P}\be\lbe{\rm{ab}}^{1}$, respectively. Now, by
functoriality of twisting \cite{Gi}, III.2.6.3.2, p.155, the above
diagram commutes except perhaps for the bottom square. But by
\cite{Gi}, Remark III.2.6.3, p.154 (which extends easily to
commutative gr-stacks), the bottom square is commutative as well.
This completes the proof.
\end{proof}

By \cite{Gi}, Remark III.3.4.4(2), p.166, there exists an action of
$H^{\e 1}(Z(G))$ on $H^{1}(G)$ given by the map
\begin{equation}\label{act2}
H^{1}(Z(G))\times H^{1}(G)\ra H^{1}(G), \quad (\e p\e,q) \mapsto
p\cdot q,
\end{equation}
where $p\cdot q$ is the class of $P\wedge^{Z} Q$, where $Z=Z(G)$ and
$P$ and $Q$ are representatives of $p$ and $q$, respectively. Now
recall from \eqref{long} the map $j^{\le(1)}\colon H^{1}(Z(G))\ra
H^{\e 1}_{{\rm{ab}}} (F\!\ra\be G)$.

\begin{corollary} For any $p\in H^{1}(Z(G))$ and $q\in H^{1}(G)$,
$$
{\rm{ab}}^{1}\lbe(\e p\cdot q)=j^{\le(1)}(p)+{\rm{ab}}^{1}(q).
$$
\end{corollary}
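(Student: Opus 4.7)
\emph{Plan.} The strategy is to reduce the corollary to Proposition 3.11 applied to a $G$-torsor obtained by extending a $Z(G)$-torsor representing $p$.

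First I will verify the auxiliary identity $j^{\le(1)}={\rm{ab}}^{1}\be\circ\be i^{\le(1)}$, where $i^{\le(1)}\colon H^{1}(Z(G))\ra H^{1}(G)$ is induced by the inclusion $Z(G)\hookrightarrow G$. Both $\varphi_{1}\be\circ\be j^{\le(1)}$ and $\psi_{1}\be\circ\be i^{\le(1)}$ are induced by the same morphism of complexes $(0\ra Z(G))\ra(F\!\ra\be G)$ that places $Z(G)$ in degree zero and embeds it into $G$, so they coincide; composing with $\varphi_{1}^{-1}$ gives the identity.

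Next, choose a $Z(G)$-torsor $P$ representing $p$ and a $G$-torsor $Q$ representing $q$, and set $\widetilde{P}:=P\wedge^{Z(G)}G$, a $G$-torsor whose class in $H^{1}(G)$ equals $i^{\le(1)}(p)$. I will apply Proposition 3.11 to the opposite torsor $\widetilde{P}^{\,\rm o}$, whose class in $H^{1}(G)$ equals $i^{\le(1)}(-p)$. Because $Z(G)$ acts trivially on both $F$ (by Definition 3.2(i)) and $G$ (by centrality), a short computation with contracted products produces canonical isomorphisms $\!\phantom{.}^{\widetilde{P}^{\,\rm o}}\be F\simeq F$ and $\!\phantom{.}^{\widetilde{P}^{\,\rm o}}\be G\simeq G$ which are compatible with $\partial$; under these identifications $\!\phantom{.}^{\widetilde{P}^{\,\rm o}}\be{\rm{ab}}^{1}={\rm{ab}}^{1}$. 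The same type of computation shows that
$$
\theta_{\widetilde{P}^{\,\rm o}}(q)=\big[Q\wedge^{G}\widetilde{P}\,\big]=\big[P\wedge^{Z(G)}\be Q\big]=p\cdot q
$$
in $H^{1}\be\big(\!\phantom{.}^{\widetilde{P}^{\,\rm o}}\be G\big)=H^{1}(G)$. Substituting these identifications into the commutative square of Proposition 3.11 then yields
$$
{\rm{ab}}^{1}(p\cdot q)={\rm{ab}}^{1}(q)-{\rm{ab}}^{1}\be\big(i^{\le(1)}(-p)\big)={\rm{ab}}^{1}(q)+j^{\le(1)}(p),
$$
where the last equality combines the auxiliary identity with the additivity of $j^{\le(1)}$ on the abelian group $H^{1}_{{\rm{ab}}}(F\!\ra\be G)$.

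The main obstacle will be the concrete verification that twisting $(F\!\ra\be G)$ by $\widetilde{P}^{\,\rm o}$ is canonically trivial. This reduces to the identifications $P^{\rm o}\wedge^{Z(G)}\be F\simeq F$ and $P^{\rm o}\wedge^{Z(G)}\be G\simeq G$, which follow at once from associativity of contracted products together with the triviality of the $Z(G)$-action on $F$ and on $G$, and should therefore be routine.
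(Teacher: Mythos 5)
Your argument is correct, and it hinges on the same pivot as the paper's proof --- Proposition 3.11 --- but applies it to a different twisting torsor, which changes the auxiliary inputs needed. The paper (following Demarche) applies Proposition 3.11 to a representative $Q$ of $q$ itself, obtaining ${\rm ab}^{1}(p\cdot q)-{\rm ab}^{1}(q)={}^{Q}{\rm ab}^{1}(\theta_{Q}(p\cdot q))$, then quotes Giraud, Proposition III.3.4.5(ii), to identify $\theta_{Q}(p\cdot q)$ with $\bigl({}^{Q}i\bigr)^{(1)}(p)$, and finishes with a naturality square showing ${}^{Q}{\rm ab}^{1}\circ\bigl({}^{Q}i\bigr)^{(1)}=j^{(1)}$. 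You instead apply Proposition 3.11 to $\widetilde{P}^{\,\rm o}$, the opposite of the $G$-torsor induced from $P$; the observation that makes this work is that twisting $(F\!\ra\be G)$ by a torsor induced from $Z(G)$ is canonically trivial, since $Z(G)$ is central in $G$ and acts trivially on $F$ by Definition 3.2(i), so that ${}^{\widetilde{P}^{\rm o}}{\rm ab}^{1}={\rm ab}^{1}$ and $\theta_{\widetilde{P}^{\rm o}}(q)=p\cdot q$. What the paper's route buys is that the only input beyond Proposition 3.11 is a directly quotable statement from Giraud; what yours buys is that you avoid that reference, at the cost of several contracted-product verifications (that $[\widetilde{P}^{\,\rm o}]=i^{(1)}(-p)$, that $Q\wedge^{G}\widetilde{P}\simeq P\wedge^{Z(G)}Q$, and that the trivialization of the twist is compatible with the identification of centers built into the definition of ${}^{P}{\rm ab}^{1}$) --- all of which are indeed routine under the quasi-abelian hypotheses. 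Your preliminary identity $j^{(1)}={\rm ab}^{1}\circ i^{(1)}$ is correct and is in substance the untwisted form of the commutative diagram with which the paper concludes.
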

\begin{proof} (After C.Demarche) Let $Q$ be a representative of $q$.
By the proposition
$$
\phantom{.}^{Q}\lbe{\rm{ab}}^{1}(\theta_{Q}(p\cdot
q))=r_{\!\be_{Q}}({\rm{ab}}^{1}(p\cdot q))={\rm{ab}}^{1}(p\cdot
q)-{\rm{ab}}^{1}(q).
$$
On the other hand, by \cite{Gi}, Proposition III.3.4.5(ii), p.167,
$\theta_{Q}(p\cdot q)=\big(\lbe\!\phantom{.}^{Q}i\big)^{(1)}(p)$,
where $\lbe\!\phantom{.}^{Q}i\colon Z(G)\ra \lbe\!\phantom{.}^{Q}G$
is the $Q$-twist of the canonical embedding $i\colon Z(G)\ra G$. The
corollary now follows from the commutativity of the diagram
$$
\xymatrix{H^{\le
1}(Z(G))\ar[r]^(.41){j^{\le(1)}}\ar[d]^{\big(\lbe\!\phantom{.}^{Q}i\big)^{(1)}}&
H^{\e
1}(Z(F)\!\ra\be Z(G))\ar[d]^{\lbe\!\phantom{.}^{Q}\lbe\varphi_{1}}\\
H^{\le 1}(\lbe\!\phantom{.}^{Q}
G)\ar[r]^(.43){\lbe\!\phantom{.}^{Q}\lbe\psi_{1}}& H^{\e
1}(\lbe\!\phantom{.}^{Q}\lbe F\!\ra\be \lbe\!\phantom{.}^{Q}\lbe
G).}
$$
\end{proof}

The following result is similar to \cite{Gi}, Lemma III.3.3.4,
p.163.

\begin{lemma} Let $P$ be an $F$-torsor and let
$Q=P\wedge^{\be F}G$. Then there exists an exact commutative diagram
\begin{equation}\label{bigdiag}
\xymatrix{H^{\le 0}(G)\ar[r]^(.4){\rm{ab}^{0}} &H^{\e
0}_{{\rm{ab}}\,}(F\!\ra\be G)\ar[r]^(.6){\delta_{0}}& H^{\le
1}(F)\ar[r]^(.5){\partial^{(1)}}\ar[d]^{\theta_{\lbe P}}& H^{\le
1}(G)\ar[r]^(.45){\rm{ab}^{1}}\ar[d]^{\theta_{\lbe Q}}&H^{\e
1}_{{\rm{ab}}\,}(F\!\ra\be G)\ar@{=}[d]\\
H^{\le 0}\big(\be\!\phantom{.}^{Q}\lbe\lbe
G\big)\ar[r]^(.43){\!\phantom{.}^{Q}\be\lbe\rm{ab}^{0}} & H^{\e
0}_{{\rm{ab}}\,}(F\!\ra\be
G)\ar[r]^(.55){\!\phantom{.}^{P}\be\lbe\delta_{0}}& H^{\le
1}\big(\be\!\phantom{.}^{P}\be\lbe
F\big)\ar[r]^{\!\phantom{.}^{P}\be\lbe\partial^{(1)}} & H^{\le
1}(\!\phantom{.}^{Q}\lbe
G)\ar[r]^(.43){\!\phantom{.}^{Q}\lbe\rm{ab}^{1}}&H^{\e
1}_{{\rm{ab}}\,}(F\!\ra\be G).}
\end{equation}
Furthermore, if $p\in H^{1}(F)$ is the class of $P$ and $c$ is any
class in $H^{\e 0}_{{\rm{ab}}\,}(F\!\ra\be G)$, then
$$
\theta_{\lbe\lbe P}(\e p\cdot
c)=\!\phantom{.}^{P}\be\be\delta_{0}(c).
$$
\end{lemma}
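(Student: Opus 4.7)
The plan is to split the assertion into three parts: exactness of both rows, commutativity of the third and fourth squares (the only squares carrying vertical arrows on both sides), and the final formula $\theta_{\lbe\lbe P}(p\cdot c)=\!\phantom{.}^{P}\be\be\delta_{0}(c)$, which is the real content associated to the second ``square''. First, I would observe that the twisted complex $(\!\phantom{.}^{P}\lbe F\!\ra\be\!\phantom{.}^{Q}\lbe G)$ is again quasi-abelian, since the three conditions of Definition 3.2 are inner-automorphism-stable and the inner twist by an $F$-torsor preserves them. Under the canonical identifications $Z\be\big(\!\phantom{.}^{P}\be F\big)=Z(F)$ and $Z\be\big(\!\phantom{.}^{Q}\be G\big)=Z(G)$, this gives $H^{\le i}_{\rm{ab}}\big(\!\phantom{.}^{P}\be F\!\ra\!\!\phantom{.}^{Q}\be G\big)=H^{\le i}_{\rm{ab}}(F\!\ra\be G)$, so exactness of both rows follows from Proposition 3.11 applied to each crossed module. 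Commutativity of the third square is a direct consequence of the functoriality of twisting (\cite{Gi}, III.2.6.3.2): twisting $\partial$ by $P$ produces $\!\phantom{.}^{P}\lbe\partial\colon\!\phantom{.}^{P}\be F\ra\!\phantom{.}^{Q}\be G$, and the induced maps on $H^{1}$ intertwine $\theta_{P}$ and $\theta_{Q}$ (recall $Q=P\wedge^{F}\lbe G$ is, by definition, the image torsor).

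For the fourth square I would invoke Proposition 3.14 applied to the $G$-torsor $Q$ with class $q=\partial^{(1)}(p)$: it yields $\!\phantom{.}^{Q}\be\lbe{\rm{ab}}^{1}\circ\theta_{Q}=r_{Q}\circ{\rm{ab}}^{1}$, where $r_{Q}(x)=x-{\rm{ab}}^{1}(q)$. By the exactness of the top row (Proposition 3.11) one has ${\rm{ab}}^{1}(q)={\rm{ab}}^{1}\lbe\big(\partial^{(1)}(p)\big)=0$, so $r_{Q}$ is the identity and the fourth square commutes on the nose. This is the key point that allows the last vertical arrow in the diagram to be the identity rather than the shift $r_{P}$ of Proposition 3.14.

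For the formula I would compute both sides in terms of representatives. Writing $Z=Z(F)$ and choosing a $(Z,Z(G))$-torsor $(R,t)$ representing $c$, Remark 3.10(b) identifies $p\cdot c$ with the class of $P\wedge^{Z}\lbe R$, so $\theta_{\lbe P}(p\cdot c)$ is represented by $(P\wedge^{Z}\lbe R)\wedge^{F}\lbe P^{\e\rm{o}}$, whereas $\!\phantom{.}^{P}\be\lbe\delta_{0}(c)$ is represented by $R\wedge^{Z}\be\!\phantom{.}^{P}\be F$ (using $Z\big(\!\phantom{.}^{P}\be F\big)=Z$). I would then write down the map
\[
(p_{1},r)\wedge_{F}p_{2}^{\e\rm{o}}\;\longmapsto\; r\wedge_{Z}\lbe\big(p_{1}\lbe p_{2}^{-1}\big),
\]
where $p_{1}\lbe p_{2}^{-1}$ denotes the ratio of $p_{1}$ and $p_{2}$ in the canonical $\!\phantom{.}^{P}\be F$-$F$-bitorsor structure on $P$, and verify that it is well-defined (using the centrality of $Z$ in $\!\phantom{.}^{P}\be F$) and $\!\phantom{.}^{P}\be F$-equivariant; both source and target being $\!\phantom{.}^{P}\be F$-torsors, this identifies them. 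The main obstacle will be precisely this last step: keeping track of the several left- and right-actions of $F$ and $\!\phantom{.}^{P}\be F$ on the various contracted products, and invoking the isomorphism $P\wedge^{F}\be(R\wedge^{Z}\be F)\simeq P\wedge^{Z}\be R$ from Remark 3.10(b) to justify the description of $p\cdot c$ in the form used above. Everything else is a clean consequence of Propositions 3.11 and 3.14 together with the stability of centers under inner twisting.
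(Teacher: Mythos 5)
Your proposal is correct and follows essentially the same route as the paper: the square over $\partial^{(1)}$ by functoriality of twisting (Giraud, III.2.6.3.2), the square over ${\rm ab}^{1}$ as the special case of the twisting proposition (Proposition 3.11 in the paper's numbering, your ``3.14'') where $r_{Q}=\mathrm{id}$ because ${\rm ab}^{1}(\partial^{(1)}(p))=0$, and the final formula by comparing representatives via Remark 3.9(b). The only difference is that for the identification $(P\wedge^{Z}R)\wedge^{F}P^{\,\mathrm{o}}\simeq R\wedge^{Z}\,{}^{P}\!F$ you propose an explicit section-level verification, where the paper simply cites Giraud, Corollary III.1.6.5(1).
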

\begin{proof} The commutativity of the left-hand square is
\cite{Gi}, III.2.6.3.2, p.155, and that of right-hand square is a
particular case of Proposition 3.11. Now, if $c\in H^{\e
0}_{{\rm{ab}}\,}(F\!\ra\be G)$ is represented by a
$(Z(F),Z(G))$-torsor $(Q,t)$ then, by Remark 3.9(b),
$\theta_{\lbe\lbe P}(\e p\cdot c)$ is represented by the
$\!\phantom{.}^{P}\be\lbe F$-torsor $\big(Q\wedge^{Z}\be
P\big)\wedge^{F}P^{0}\simeq Q\wedge^{Z}\!\phantom{.}^{P}\be\lbe F$,
where $Z=Z(F)$. See \cite{Gi}, Corollary III.1.6.5(1), p.125. This
completes the proof.
\end{proof}

\begin{proposition} Let $(F\!\overset{\partial}\ra\be G)$ be a quasi-abelian
crossed module on $E$.
\begin{enumerate}
\item[(a)] Let $p\in H^{1}(F)$, let $P$ be an $F$-torsor representing $p$ and let
$Q=P\wedge^{\be F}G$. Then the stabilizer of $p$ in $H^{\e
0}_{{\rm{ab}}\,}(F\!\ra\be G)$ is the image of
$\!\phantom{.}^{Q}\lbe{\rm{ab}}^{0}\colon H^{\le
0}\big(\be\!\phantom{.}^{Q}\lbe G\e\big)\ra H^{\e
0}_{{\rm{ab}}}(F\!\ra\be G).$
\item[(b)] The map $\partial^{\e(1)}$ induces an injection
$$
H^{\le 1}(F)/H^{\e 0}_{{\rm{ab}}\,}(F\!\ra\be G)\ra H^{1}(G)
$$
whose image is the kernel of $\e{\rm{ab}}^{1}$.
\end{enumerate}
\end{proposition}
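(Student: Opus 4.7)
The plan is to combine the exactness of the bottom row of diagram \eqref{bigdiag} with the translation identity $\theta_{\lbe\lbe P}(\e p\cdot c)=\!\phantom{.}^{P}\be\be\delta_{0}(c)$ supplied by the last line of Lemma 3.13. The key basepoint observations are that $\theta_{\lbe P}(p)=[P\lbe\wedge^{\be F}\be P^{\e\rm{o}}]$ is the trivial class in $H^{\le 1}\big(\!\phantom{.}^{P}\be\lbe F\e\big)$ and, similarly, $\theta_{\lbe Q}\big(\partial^{\e(1)}(p)\big)$ is the basepoint of $H^{\le 1}\big(\!\phantom{.}^{Q}\lbe\lbe G\e\big)$, since $P$ and $Q=P\wedge^{\be F}\be G$ represent $p$ and $\partial^{\e(1)}(p)$, respectively.

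For part (a), a class $c\in H^{\e 0}_{\rm{ab}}(F\!\ra\be G)$ stabilizes $p$ iff $\theta_{\lbe P}(p\cdot c)=\theta_{\lbe P}(p)$ (using that $\theta_{\lbe P}$ is a bijection), i.e., iff $\theta_{\lbe P}(p\cdot c)$ is the basepoint of $H^{\le 1}\big(\!\phantom{.}^{P}\be\lbe F\e\big)$. By Lemma 3.13 this coincides with $\!\phantom{.}^{P}\be\lbe\delta_{0}(c)$, so the stabilizing condition is $\!\phantom{.}^{P}\be\lbe\delta_{0}(c)=*$. Exactness of the bottom row of \eqref{bigdiag} at the term $H^{\e 0}_{\rm{ab}}(F\!\ra\be G)$ then identifies this kernel with $\img\big(\!\phantom{.}^{Q}\lbe{\rm{ab}}^{\e 0}\big)$, giving (a).

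For part (b), I first check that $\partial^{\e(1)}$ is constant on $H^{\e 0}_{\rm{ab}}$-orbits: using the commutativity of the middle square of \eqref{bigdiag}, Lemma 3.13 and exactness of the bottom row at $H^{\le 1}\big(\!\phantom{.}^{P}\be\lbe F\e\big)$, one computes
$$
\theta_{\lbe Q}\big(\partial^{\e(1)}(p\cdot c)\big)=\!\phantom{.}^{P}\be\lbe\partial^{\e(1)}\big(\theta_{\lbe P}(p\cdot c)\big)=\!\phantom{.}^{P}\be\lbe\partial^{\e(1)}\big(\!\phantom{.}^{P}\be\lbe\delta_{0}(c)\big)=*,
$$
whence $\partial^{\e(1)}(p\cdot c)=\partial^{\e(1)}(p)$. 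Conversely, if $p^{\,\prime}\in H^{\le 1}(F)$ satisfies $\partial^{\e(1)}(p^{\,\prime})=\partial^{\e(1)}(p)$, the middle square yields $\!\phantom{.}^{P}\be\lbe\partial^{\e(1)}\big(\theta_{\lbe P}(p^{\,\prime})\big)=*$, so by exactness of the bottom row at $H^{\le 1}\big(\!\phantom{.}^{P}\be\lbe F\e\big)$ we can write $\theta_{\lbe P}(p^{\,\prime})=\!\phantom{.}^{P}\be\lbe\delta_{0}(c)=\theta_{\lbe P}(p\cdot c)$ for some $c$, and consequently $p^{\,\prime}=p\cdot c$. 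Thus $\partial^{\e(1)}$ factors through an injection on $H^{\le 1}(F)/H^{\e 0}_{\rm{ab}}(F\!\ra\be G)$, and its image equals $\krn\,{\rm{ab}}^{\le 1}$ by the exactness of the untwisted sequence of Proposition 3.10 at $H^{\le 1}(G)$.

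The argument is essentially a pair of diagram chases; there is no deep obstacle once one recognizes that the basepoint shifts introduced by $\theta_{\lbe P}$ and $\theta_{\lbe Q}$ convert the orbit questions of (a) and (b) into the kernel/preimage questions already answered by the exactness of the bottom row of \eqref{bigdiag}. The only subtlety is keeping track of which twisted sheaf each map lives on, but the diagram of Lemma 3.13 dictates this unambiguously.
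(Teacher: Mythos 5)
Your proof is correct and follows essentially the same route as the paper's: both reduce the stabilizer and fibre questions to the exactness of the bottom (twisted) row of diagram \eqref{bigdiag} via the identity $\theta_{P}(p\cdot c)={}^{P}\delta_{0}(c)$ from Lemma 3.13, and both obtain the image statement in (b) from the exactness of the untwisted row. The only difference is that you explicitly verify that $\partial^{(1)}$ is constant on orbits (well-definedness of the quotient map), a point the paper leaves implicit; this is a harmless and slightly more careful addition.
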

\begin{proof} If $c\in H^{\e 0}_{{\rm{ab}}\,}(F\!\ra\be G)$
satisfies $p\cdot c=p$ then, by the lemma,
$\!\phantom{.}^{P}\be\be\delta_{0}(c)=\theta_{\lbe\lbe P}(\e p\cdot
c)=\theta_{\lbe\lbe P}(p)$ is the unit class of $H^{\le
1}\big(\be\!\phantom{.}^{P}\be\lbe F\e\big)$ \cite{Gi}, Remark
III.2.6.3, p.154, whence $c$ is in the image of
$\phantom{.}^{Q}\lbe{\rm{ab}}^{0}$ by the exactness of the bottom
row of diagram \eqref{bigdiag}. Assertion (a) follows. That the
image of the map in (b) is the kernel of $\e{\rm{ab}}^{1}$ is
immediate from the exactness of the top row of \eqref{bigdiag}. To
prove its injectivity, let $p,p^{\e\prime}\in H^{1}(F)$ be such that
$\partial^{\e(1)}(p)=\partial^{\e(1)}(p^{\e\prime})$. Then the
commutativity of \eqref{bigdiag} and the exactness of its bottom row
show that $\theta_{\lbe
P}(p^{\e\prime})=\phantom{.}^{P}\be\be\delta_{0}(c)=\theta_{\lbe\lbe
P}(\e p\cdot c)$ for some $c\in H^{\e 0}_{{\rm{ab}}\,}(F\!\ra\be
G)$. Thus $p^{\e\prime}=p\cdot c$, which completes the proof.
\end{proof}

\begin{corollary} Let $P$ be a $G$-torsor and let $p\in H^{1}(G)$ be
its class. Then the map $\theta_{\lbe P}^{-1}\circ
\!\phantom{.}^{P}\be\partial^{\e(1)}\colon
H^{1}\big(\be\!\phantom{.}^{P}\be\lbe F\e\big)\ra H^{1}(G)$ induces
a bijection
$$
H^{1}\big(\be\!\phantom{.}^{P}\! F\e\big)/H^{\e
0}_{{\rm{ab}}\,}(F\!\ra\be G)\overset{\!\sim}\ra\{\, p^{\e\prime}\in
H^{1}(G)\colon {\rm{ab}}^{1}(p^{\e\prime})={\rm{ab}}^{1}(p)\}.
$$
\end{corollary}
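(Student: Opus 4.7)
The plan is to apply Proposition 3.14(b) to the twisted crossed module $\big(\be\!\phantom{.}^{P}\be\lbe F\!\ra\be\!\phantom{.}^{P}\be\lbe G\e\big)$ and then transport the resulting injection back to $H^{\le 1}(G)$ via the bijection $\theta_{\lbe P}^{-1}$. First, I would verify that twisting preserves quasi-abelianness, which follows from the functoriality of twisting (centers, images and kernels are stable under it). Moreover, the $G$-action on both $Z(F)$ and $Z(G)$ is trivial: on $Z(G)$ because $G$ acts on itself by inner automorphisms, and on $Z(F)$ because Definition 3.2(ii) allows one to write any element of $G$ as $\partial(f)\cdot z$ with $f\in F$ and $z\in Z(G)$, where $\partial(f)$ acts on $Z(F)$ by conjugation by $f$ (hence trivially on the center) and $z$ acts trivially by Definition 3.2(i). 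Consequently $\be\!\phantom{.}^{P}\be\lbe Z(F)=Z(F)$ and $\be\!\phantom{.}^{P}\be\lbe Z(G)=Z(G)$ canonically, so $H^{\e 0}_{{\rm{ab}}}\big(\be\!\phantom{.}^{P}\be\lbe F\!\ra\be\!\phantom{.}^{P}\be\lbe G\e\big)=H^{\e 0}_{{\rm{ab}}}(F\!\ra\be G)$.

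With these identifications, applying Proposition 3.14(b) to the twisted crossed module yields an injection
$$
H^{\le 1}\big(\be\!\phantom{.}^{P}\be\lbe F\e\big)/H^{\e 0}_{{\rm{ab}}}(F\!\ra\be G)\hookrightarrow H^{\le 1}\big(\be\!\phantom{.}^{P}\be\lbe G\e\big),
$$
with underlying map $\!\phantom{.}^{P}\be\partial^{\e(1)}$ and image equal to $\krn\big(\be\!\phantom{.}^{P}\be\lbe{\rm{ab}}^{1}\big)$. Composing with the bijection $\theta_{\lbe P}^{-1}\colon H^{\le 1}\big(\be\!\phantom{.}^{P}\be\lbe G\e\big)\overset{\!\sim}\ra H^{\le 1}(G)$ then produces an injection whose underlying map is $\theta_{\lbe P}^{-1}\circ\!\phantom{.}^{P}\be\partial^{\e(1)}$ (the map of the corollary) and whose image is $\theta_{\lbe P}^{-1}\big(\krn\big(\be\!\phantom{.}^{P}\be\lbe{\rm{ab}}^{1}\big)\big)$.

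To finish, I would identify this image by invoking Proposition 3.11, which supplies the relation $\!\phantom{.}^{P}\be\lbe{\rm{ab}}^{1}\circ\theta_{\lbe P}=r_{\!\be_{P}}\circ{\rm{ab}}^{1}$ with $r_{\!\be_{P}}(x)=x-{\rm{ab}}^{1}(p)$. Hence, for $p^{\e\prime}\in H^{\le 1}(G)$, the element $\theta_{\lbe P}(p^{\e\prime})$ belongs to $\krn\big(\be\!\phantom{.}^{P}\be\lbe{\rm{ab}}^{1}\big)$ if and only if ${\rm{ab}}^{1}(p^{\e\prime})={\rm{ab}}^{1}(p)$, which yields exactly the stated description of the image. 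The main delicate point I anticipate is checking that the right action of $H^{\e 0}_{{\rm{ab}}}(F\!\ra\be G)$ on $H^{\le 1}\big(\be\!\phantom{.}^{P}\be\lbe F\e\big)$ coming from the twisted crossed module agrees with the action obtained through the canonical identification $H^{\e 0}_{{\rm{ab}}}\big(\be\!\phantom{.}^{P}\be\lbe F\!\ra\be\!\phantom{.}^{P}\be\lbe G\e\big)=H^{\e 0}_{{\rm{ab}}}(F\!\ra\be G)$; this compatibility should follow from the functoriality of twisting together with the observations of Remarks 2.5(c) and 3.9(b).
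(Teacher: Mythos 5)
Your proposal is correct and follows essentially the same route as the paper: injectivity via Proposition 3.14(b) applied to the twisted crossed module $\big(\be\!\phantom{.}^{P}\lbe\be F\!\ra\!\be \!\phantom{.}^{P}\lbe\be G\big)$ together with the bijectivity of $\theta_{\lbe P}$, and identification of the image via the twisting formula of Proposition 3.11 (the paper cites the exactness in Proposition 3.10 where you cite the image statement of 3.14(b), but these are the same fact). Your preliminary verifications that twisting preserves quasi-abelianness and that $H^{\e 0}_{{\rm{ab}}}$ is unchanged under twisting are left implicit in the paper but are correct and consistent with its conventions.
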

\begin{proof} Injectivity follows from the injectivity of  $\theta_{\lbe P}^{-1}$ and
part (b) of the proposition applied to the crossed module
$\big(\be\!\phantom{.}^{P}\lbe\be
F\!\overset{\!\!\phantom{.}^{P}\be\lbe\partial}\lra\!\be
\!\phantom{.}^{P}\lbe\be G\big)$. To prove surjectivity, let
$p^{\e\prime}\in H^{1}(G)$ be such that
${\rm{ab}}^{1}(p^{\e\prime})={\rm{ab}}^{1}(p)$ and let
$P^{\e\prime}$ and $P$ be representatives of $p^{\e\prime}$ and $p$,
respectively. Then, by Proposition 3.11,
$$
\!\phantom{.}^{P}\lbe\!{\rm{ab}}^{1}(\theta_{\lbe
P}(P^{\e\prime}))=r_{_{\!\!
P}}({\rm{ab}}^{1}(p^{\e\prime}))={\rm{ab}}^{1}(p^{\e\prime})-{\rm{ab}}^{1}(p)=0.
$$
Thus, by Proposition 3.10 applied to
$\big(\be\!\phantom{.}^{P}\lbe\be
F\!\overset{\!\!\phantom{.}^{P}\be\lbe\partial}\lra\!\be
\!\phantom{.}^{P}\lbe\be G\big)$, $\theta_{\lbe P}(P^{\e\prime})$ is
in the image of $\!\phantom{.}^{P}\be\lbe\partial^{\e(1)}$. This
completes the proof.
\end{proof}

Recall from \eqref{big} and \eqref{bn0} the maps
$\overline{\partial}^{\e(1)}$ and $b^{\e(1)}_{\lbe G}$ and let
$\tilde{b}_{\lbe G}^{\e(1)}\colon H^{\le 1}(G)\ra H^{\le
1}({\rm{Inn}}(F))$ be the composition
\begin{equation}\label{btilde}
H^{\le 1}(G)\overset{b_{\lbe G}^{\e(1)}}\longrightarrow H^{\le
1}({\rm{Inn}}(G))\overset{\overline{\partial}^{\e(1)-1}}
\longrightarrow H^{\le 1}({\rm{Inn}}(F)).
\end{equation}
Recall also from \eqref{long} the map $\pi^{\e(1)}\colon H^{\e
1}_{{\rm{ab}}}(F\!\ra\be G)\ra  H^{\e 2}(Z(F))$.

\begin{lemma} The diagram
$$
\xymatrix{H^{\le 1}(G)\ar[r]^(.45){{\rm{ab}}^{1}} \ar[d]^(.45)
{\tilde{b}^{\e(1)}_{\lbe G}}& H^{\e 1}_{{\rm{ab}}}(F\!\ra\be G)
\ar[d]^(.45){\pi^{\e(1)}}\\
H^{\le 1}({\rm{Inn}}(F))\ar[r]^{d_{\lbe F}} & H^{\e 2}(Z(F))}
$$
commutes.
\end{lemma}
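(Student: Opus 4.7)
The plan is to compare the two compositions by factoring both of them through the cohomology of the auxiliary crossed module $(F\!\overset{b_{\lbe F}}\lra\text{Inn}(F))$.

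First I would introduce the morphism of crossed modules
$$
\rho\colon(F\!\overset{\partial}\ra\be G)\ra(F\!\overset{b_{\lbe F}}\lra\text{Inn}(F))
$$
whose degree-$(-1)$ component is $\mathrm{id}_{\lbe F}$ and whose degree-$0$ component is $\alpha:=\overline{\partial}^{\,-1}\circ b_{\lbe G}\colon G\ra\text{Inn}(F)$. Verifying that $\rho$ is a morphism of crossed modules reduces to checking that the $G$-action on $F$ factors through $\alpha$, which is forced by the quasi-abelianness of $(F\!\ra\be G)$: writing $g=\partial(f_{\lbe g})z$ with $z\in Z(G)$, conditions (i) and (ii) of Definition 3.2 give $\!\phantom{.}^{g}\!\be f=f_{\lbe g}ff_{\lbe g}^{\,-1}$, which is precisely the conjugation action of $\alpha(g)=b_{\lbe F}(f_{\lbe g})$. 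Since $b_{\lbe G}|_{Z(G)}=1$, $\rho$ fits into a commutative square of crossed modules
$$
\xymatrix{(Z(F)\!\ra\be Z(G))\ar[r]^(.52){\iota}\ar[d]_{\pi} & (F\!\ra\be G)\ar[d]^{\rho}\\
(Z(F)\!\ra\be 0)\ar[r]^(.52){\iota'} & (F\!\ra\be\text{Inn}(F)),}
$$
where $\iota$ is the embedding \eqref{emb} and $\iota'$ is the morphism of Example 2.3(ii) applied to $b_{\lbe F}$. By Corollary 3.5 and Example 2.3(ii), both $\iota$ and $\iota'$ induce bijections on $H^{1}$; I will write $(\iota'_{*})^{-1}$ for the resulting bijection $H^{1}(F\!\ra\be\text{Inn}(F))\overset{\!\sim}\ra H^{2}(Z(F))$.

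Taking $H^{1}$ of the square and recalling that $\iota_{*}=\varphi_{1}$ yields $\pi^{(1)}\circ\varphi_{1}^{-1}=(\iota'_{*})^{-1}\circ\rho_{*}$, and hence $\pi^{(1)}\circ{\rm{ab}}^{1}=(\iota'_{*})^{-1}\circ\rho_{*}\circ\psi_{1}$. On the other hand, the naturality of $\psi_{1}$ with respect to morphisms of crossed modules, applied to $\rho$, gives $\rho_{*}\circ\psi_{1}=\psi_{1,(F\ra\text{Inn}(F))}\circ\alpha^{(1)}$, and by construction $\alpha^{(1)}=\overline{\partial}^{\e(1)-1}\circ b_{\lbe G}^{(1)}=\tilde{b}_{\lbe G}^{(1)}$. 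Combining these identities reduces the lemma to the claim
$$
(\iota'_{*})^{-1}\circ\psi_{1,(F\ra\text{Inn}(F))}=d_{\lbe F}.
$$

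I expect this last identification to be the main technical obstacle. Its content is that, under the bijection $H^{i}(F\!\ra\be\text{Inn}(F))\simeq H^{i+1}(Z(F))$ of Example 2.3(ii), the sequence of Proposition 2.4 for $(F\!\overset{b_{\lbe F}}\lra\text{Inn}(F))$ coincides with the classical low-degree exact sequence of the central extension $1\ra Z(F)\ra F\ra\text{Inn}(F)\ra 1$. This is precisely the compatibility between Breen's construction of the connecting maps in Proposition 2.4 (in the surjective-boundary case) and Giraud's classical $d_{\lbe F}$; it can be verified either by an explicit Čech-cocycle calculation — lifting an $\text{Inn}(F)$-valued cocycle $(\bar{f}_{ij})$ to $f_{ij}\in F$ and reading off the $Z(F)$-valued $2$-cocycle $f_{ij}f_{jk}f_{ik}^{-1}$ on both sides — or by quoting the corresponding compatibility from \cite{Br}. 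The rest of the argument is a diagram chase.
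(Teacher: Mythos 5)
Your argument is correct in outline but takes a genuinely different route from the paper's. The paper's proof (due to Demarche) is a direct gerbe-theoretic comparison: both $d_{F}\big(\tilde{b}^{(1)}_{G}(p)\big)$ and $\pi^{(1)}({\rm ab}^{1}(p))$ are represented by explicit $Z(F)$-gerbes of liftings, namely $K(Q)$ (liftings of an ${\rm Inn}(F)$-torsor $Q$ to $F$) and $K(\mathcal{P}^{\rm ab})$ (liftings of $\mathcal{P}^{\rm ab}$ to $Z(G)$), and one writes down a $Z(F)$-morphism $K(Q)\to K(\mathcal{P}^{\rm ab})$ by hand, using Giraud's Proposition III.3.4.5 together with Corollary 3.12 to convert a lift $R$ of $Q$ to $F$ into a $Z(G)$-torsor lifting $\mathcal{P}^{\rm ab}$. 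You instead factor both composites through the auxiliary crossed module $(F\to{\rm Inn}(F))$: your $\rho$ is indeed a morphism of crossed modules (the equivariance check via a local decomposition $g=\partial(f_{g})z$ is precisely where conditions (i) and (ii) of Definition 3.2 enter), the two commuting squares are correct, and the identification of the induced map on the abelian side with $\pi^{(1)}$ of \eqref{long} is consistent with \eqref{zseq}. What this buys is a clean, purely functorial reduction of the lemma to a single universal compatibility: under the bijection $H^{1}(F\to{\rm Inn}(F))\simeq H^{2}(Z(F))$ of Example 2.3(ii), the map $\psi_{1}$ becomes Giraud's coboundary $d_{F}$ of the central extension $1\to Z(F)\to F\to{\rm Inn}(F)\to 1$. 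That statement is true, but be aware that it is exactly where all the content of the lemma now lives, and it is essentially the same assertion the paper itself makes without proof in Remark 3.9(a); moreover a bare \v{C}ech-cocycle verification is not automatically legitimate on an arbitrary site, so the cleanest justification is stack-theoretic --- the gr-stack of $(F,{\rm Inn}(F))$-torsors is equivalent to ${\rm TORS}(Z(F))$, and the torsor $\psi_{1}(Q)$ under it is precisely the lifting gerbe $K(Q)$, which is the very object the paper manipulates directly. In short, the two proofs converge on the same gerbe; yours hides the gerbe inside a naturality argument, while the paper's exhibits the required equivalence of gerbes explicitly.
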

\begin{proof} (After C.Demarche) Let $p$ be a class in $H^{1}(G)$, let $P$ be a
$G$-torsor representing $p$ and let $Q$ be an $\text{Inn}(F)$-torsor
representating $\tilde{b}^{\e(1)}_{\lbe G}\lbe(p)$. Then
$\big(d_{\lbe F}\circ \tilde{b}^{\e(1)}_{\lbe G}\big)(p)$ is
represented by the $Z(F)$-gerbe $K(Q)$ of liftings of $Q$ to $F$
\cite{Gi}, IV.4.2.2, p.280. Now recall the gr-stack $\mathcal
C^{\rm{ab}}$ associated to $(Z(F)\!\ra\be Z(G))$, so that $H^{\e
1}_{{\rm{ab}}}(F\!\ra\be G)\simeq H^{\le 1}(\mathcal C^{\rm{ab}})$,
and let $\mathcal P^{\rm{ab}}$ be a $\mathcal C^{\rm{ab}}$-torsor
representing the class in $H^{\le 1}(\mathcal C^{\rm{ab}})$
corresponding to ${\rm{ab}}^{1}(p)$. Then
$\pi^{\e(1)}({\rm{ab}}^{1}(p))$ is represented by the $Z(F)$-gerbe
$K(\mathcal P^{\rm{ab}})$ of liftings of $\mathcal P^{\rm{ab}}$ to
$Z(G)$ (see \eqref{zseq}). Thus, by \cite{Gi}, Corollary IV.2.2.7,
p.216, it suffices to define a $Z(F)$-morphism of $Z(F)$-gerbes
$m\colon K(Q)\ra K(\mathcal P^{\rm{ab}})$. Let $R$ be a lift of $Q$
to $F$, let $r\in H^{1}(F)$ be its class and set
$t=\partial^{\e(1)}(r)\in H^{1}(G)$. The commutativity of the
diagram
$$
\xymatrix{H^{\le 1}(F\le)\ar[r]^(.45){\partial^{\e(1)}} \ar[d]^(.45)
{b_{\lbe F}^{\e(1)}}& H^{\e 1}(G\le)
\ar[d]^(.45){b_{\lbe G}^{\e(1)}}\\
H^{\le 1}({\rm{Inn}}(F))\ar[r]^{\overline{\partial}^{\e(1)}} & H^{\e
1}({\rm{Inn}}(G))}
$$
shows that $b^{(1)}_{\lbe
G}\lbe(t)=\overline{\partial}^{\e(1)}\big(b_{F}^{\e(1)}(r)\big)=
\overline{\partial}^{\e(1)}(\tilde{b}^{\e(1)}(p))=b^{\e(1)}_{\lbe
G}\lbe(p)$. Thus, by \cite{Gi}, Proposition III.3.4.5, (iii) and
(iv), p.167, there exists a class $z\in H^{1}(Z(G))$, which is
uniquely determined modulo the image of the coboundary map
$\!\phantom{.}^{P}\! d_{\le G}\colon
H^{0}\big(\be\!\phantom{.}^{P}\text{Inn(G)}\big)\ra H^{1}(Z(G))$,
such that $p=z\cdot t$. Note that, since $\!\phantom{.}^{P}\! d_{\le
G}$ factors as
$$
H^{0}\big(\!\phantom{.}^{P}\lbe\text{Inn(G)}\big)\overset
{\big(\!\!\phantom{.}^{P}\overline{\partial}^{\e(0)}\big)^{-1}}\lra
H^{0}\big(\be\!\phantom{.}^{P}
\lbe\text{Inn(F)}\big)\overset{\!\!\phantom{.}^{P}\! d_{\lbe F}}\lra
H^{1}(Z(F))\overset{\partial_{Z}^{(1)}}\ra H^{1}(Z(G)),
$$
$z$ is uniquely determined modulo the image of
$\partial_{Z}^{\e(1)}\circ\!\phantom{.}^{P}\! d_{\lbe F}$. Let $X$
be a $Z(G)$-torsor representing $z$. Since
${\rm{ab}}^{1}(t)={\rm{ab}}^{1}(\partial^{\e(1)}(r))=0$, Corollary
3.12 yields $j^{\e(1)}(z)={\rm{ab}}^{1}(z\cdot t)={\rm{ab}}^{1}(p)$.
Thus $X$ is a lift of $\mathcal P^{\rm{ab}}$ to $Z(G)$ and we set
$m(R)=X$. It is not difficult to check that $\text{lien}(m)$ is the
identity of $Z(F)$, which completes the proof.
\end{proof}

We now recall from \eqref{long} the map $\pi^{\e(1)}\colon H^{\le
1}_{\rm{ab}}(F\!\ra\be G)\ra H^{\e 2}(Z(F))$ and define
\begin{equation}\label{delta}
\delta_{1}\colon H^{\e 1}_{\rm{ab}}(F\!\ra\be G)\ra H^{\le 2}(F)
\end{equation}
by the formula $\delta_{1}(y)=\pi^{\e(1)}(y)\cdot\varepsilon_{\lbe
F}$ for every $y\in H^{\e 1}_{\rm{ab}\,}(F\!\ra\be G)$.

\begin{proposition} A class $y\in H^{\e 1}_{\rm{ab}\,}(F\!\ra\be G)$
is in the image of ${\rm{ab}}^{1}$ if, and only if,
$\delta_{1}(y)\in H^{\e 2}(F)^{\e\prime}$.
\end{proposition}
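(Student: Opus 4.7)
The plan is to prove both directions by threading together Lemma 3.16, Corollary 3.8, the exact sequence \eqref{long} and Corollary 3.12, with Corollary 3.12 doing the bookkeeping at the end to pass from ``equal up to something in $\img j^{(1)}$'' to ``equal in $H^{1}_{\rm{ab}}$''.

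For the easy direction, suppose $y={\rm{ab}}^{1}(p)$ for some $p\in H^{1}(G)$. Then by Lemma 3.16,
\[
\pi^{\e(1)}(y)=\pi^{\e(1)}({\rm{ab}}^{1}(p))=d_{\lbe F}\lbe\big(\tilde{b}^{\e(1)}_{\lbe G}(p)\big),
\]
so $\pi^{\e(1)}(y)$ lies in the image of $d_{\lbe F}\colon H^{1}({\rm{Inn}}(F))\ra H^{2}(Z(F))$. Applying Corollary 3.8 to the group $F$ then tells us that $\delta_{1}(y)=\pi^{\e(1)}(y)\cdot\varepsilon_{\lbe F}$ is neutral, i.e.\ $\delta_{1}(y)\in H^{2}(F)^{\e\prime}$.

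For the converse, assume $\delta_{1}(y)=\pi^{\e(1)}(y)\cdot\varepsilon_{\lbe F}\in H^{2}(F)^{\e\prime}$. By Corollary 3.8 applied to $F$, there exists $q\in H^{1}({\rm{Inn}}(F))$ with $d_{\lbe F}(q)=\pi^{\e(1)}(y)$. The commutativity of diagram \eqref{big} gives
\[
d_{\lbe G}\big(\overline{\partial}^{\e(1)}(q)\big)=\partial_{Z}^{(2)}(d_{\lbe F}(q))=\partial_{Z}^{(2)}(\pi^{\e(1)}(y))=0,
\]
where the last equality is the exactness of \eqref{long} at $H^{2}(Z(F))$. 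By the exact sequence \eqref{bn0}, $\overline{\partial}^{\e(1)}(q)$ therefore lies in $\img b^{\e(1)}_{\lbe G}$, so there exists $p\in H^{1}(G)$ with $b^{\e(1)}_{\lbe G}(p)=\overline{\partial}^{\e(1)}(q)$, i.e.\ $\tilde{b}^{\e(1)}_{\lbe G}(p)=q$. Applying Lemma 3.16 again,
\[
\pi^{\e(1)}({\rm{ab}}^{1}(p))=d_{\lbe F}\lbe\big(\tilde{b}^{\e(1)}_{\lbe G}(p)\big)=d_{\lbe F}(q)=\pi^{\e(1)}(y).
\]

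Thus $y-{\rm{ab}}^{1}(p)\in\ker\pi^{\e(1)}=\img j^{\e(1)}$ by exactness of \eqref{long}, so we may write $y-{\rm{ab}}^{1}(p)=j^{\e(1)}(z)$ for some $z\in H^{1}(Z(G))$. By Corollary 3.12,
\[
{\rm{ab}}^{1}\lbe(z\cdot p)=j^{\e(1)}(z)+{\rm{ab}}^{1}(p)=y,
\]
which shows that $y\in\img{\rm{ab}}^{1}$ and completes the proof. The only genuinely delicate step is the passage from the existence of $q\in H^{1}({\rm{Inn}}(F))$ lifting $\pi^{\e(1)}(y)$ to the existence of an actual $p\in H^{1}(G)$ with $\tilde{b}^{\e(1)}_{\lbe G}(p)=q$; it is here that the quasi-abelian hypothesis is used through the commutativity of \eqref{big} together with the exactness of \eqref{bn0}, after which Corollary 3.12 handles the remaining discrepancy living in $\img j^{\e(1)}$.
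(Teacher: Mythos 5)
Your proof is correct and follows essentially the same route as the paper: Lemma 3.16 plus the neutrality criterion for the forward direction, then Corollary 3.8 (for $F$), the commutativity of \eqref{big}, exactness of \eqref{bn0} and \eqref{long}, and finally Corollary 3.12 to absorb the discrepancy in $\img j^{\e(1)}$. The only cosmetic difference is that you cite Corollary 3.8 where the paper invokes Proposition 3.7 together with the exactness of \eqref{bn} directly, which is the same content.
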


\begin{proof} Let $q\in H^{1}(G)$. By Lemma 3.16 and Proposition 3.7
(applied to $F$),
$$
\begin{array}{rcl}
\delta_{1}({\rm{ab}}^{1}(q))&=&\pi^{\e(1)}({\rm{ab}}^{1}(q)))
\cdot\varepsilon_{\lbe F}=
d_{F}\lbe\big(\e\tilde{b}^{\e(1)}(q)\big)\cdot\varepsilon_{\lbe F}\\
&=&n_{F}\lbe\big(\e\tilde{b}^{\e(1)}(q)\big) \in H^{\e
2}(F)^{\e\prime}.
\end{array}
$$
Conversely, let $y\in H^{\e 1}_{\rm{ab}\,} (F\!\ra\be G)$ be such
that $\delta_{1}(y)=\pi^{\e(1)}(y) \cdot\varepsilon_{\lbe F}\in
H^{\e 2}(F)^{\e\prime}$. Then $\pi^{\e(1)}(y)=d_{F}(x)$ for some
$x\in H^{\le 1}({\rm{Inn}}(F))$ by the exactness of \eqref{bn} and
Proposition 3.7. Now, by the exactness of \eqref{long} and the
commutativity of \eqref{big},
$$
0=\partial_{Z}^{(2)}(\pi^{\e(1)}(y))=
\partial_{Z}^{(2)}(d_{F}(x))=
d_{\lbe G}\big(\e\overline{\partial}^{\e(1)}(x)\big).
$$
We conclude that $\overline{\partial}^{\e(1)}(x)=b^{\e(1)}(z)$ for
some $z\in H^{1}(G)$ by the exactness of \eqref{bn0}. Thus
$x=\tilde{b}^{\e(1)}(z)$ by the definition of $\tilde{b}^{\e(1)}$
\eqref{btilde}, and Lemma 3.16 yields
$$
\pi^{\e(1)}(y)=d_{F}(x)=d_{F}\lbe\big(\e\tilde{b}^{\e(1)}(z)\big)=
\pi^{\e(1)}({\rm{ab}}^{1}(z)).
$$
Now the exactness of \eqref{long} shows that
$y-{\rm{ab}}^{1}(z)=j^{\e(1)}(w)$ for some $w\in H^{1}(Z(G))$,
whence
$$
y=j^{\e(1)}(w)+{\rm{ab}}^{1}(z)={\rm{ab}}^{1}(w\be\cdot\be z),
$$
by Corollary 3.12. The proof is now complete.
\end{proof}

\begin{proposition} The sequence
$$
H^{\e 1}_{{\rm{ab}}}(F\!\ra\be G)\overset{\delta_{1}}\lra H^{\e
2}(F) \overset{\,\partial^{(2)}}\lra H^{\e 2}(G)
$$
is exact.
\end{proposition}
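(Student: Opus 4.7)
The plan is to derive the exactness of
$$
H^{\e 1}_{{\rm{ab}}}(F\!\ra\be G)\overset{\delta_{1}}\lra H^{\e 2}(F) \overset{\,\partial^{(2)}}\lra H^{\e 2}(G)
$$
from three ingredients already assembled in the paper: the commutative square \eqref{comp} expressing compatibility of $\partial^{(2)}$ with $\partial_{Z}^{(2)}$ and the actions \eqref{act}, the simply transitive action of $H^{\e 2}(Z(G))$ on $H^{\e 2}(G)$ (\cite{Gi}, Theorem IV.3.3.3(i)), and the long exact cohomology sequence \eqref{long} of the short exact sequence \eqref{zseq}. A fourth, preliminary point I would need is that $\partial^{(2)}(\varepsilon_{\lbe F})=\varepsilon_{\lbe G}$; this is essentially tautological from the construction of $\partial^{(2)}$ in \cite{Gi}, Proposition IV.3.1.5, since the induced functor on gerbes sends the unit gerbe $\text{TORS}(F)$ to a gerbe canonically equivalent to $\text{TORS}(G)$ via $P\mapsto P\wedge^{F}\be G$.

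For the inclusion $\img\e\delta_{\lbe 1}\subset\krn\e\partial^{(2)}$, let $y\in H^{\e 1}_{\rm{ab}}(F\!\ra\be G)$. By the definition \eqref{delta} of $\delta_{\lbe 1}$, the commutativity of \eqref{comp}, and the preliminary fact $\partial^{(2)}(\varepsilon_{\lbe F})=\varepsilon_{\lbe G}$,
$$
\partial^{(2)}(\delta_{1}(y))=\partial^{(2)}\!\lbe\big(\pi^{(1)}(y)\cdot\varepsilon_{\lbe F}\big)=\partial_{Z}^{(2)}\lbe\big(\pi^{(1)}(y)\big)\cdot\varepsilon_{\lbe G}.
$$
By the exactness of \eqref{long}, $\partial_{Z}^{(2)}\circ\pi^{(1)}=0$, so the right-hand side is $0\cdot\varepsilon_{\lbe G}=\varepsilon_{\lbe G}$.

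For the reverse inclusion, let $r\in H^{\e 2}(F)$ satisfy $\partial^{(2)}(r)=\varepsilon_{\lbe G}$. Simple transitivity of the action of $H^{\e 2}(Z(F))$ on $H^{\e 2}(F)$ (applied to $F$ in place of $G$) produces a unique $x\in H^{\e 2}(Z(F))$ with $r=x\cdot\varepsilon_{\lbe F}$. Applying $\partial^{(2)}$ and invoking \eqref{comp} once more,
$$
\varepsilon_{\lbe G}=\partial^{(2)}(r)=\partial_{Z}^{(2)}(x)\cdot\varepsilon_{\lbe G}.
$$
Simple transitivity of the $H^{\e 2}(Z(G))$-action on $H^{\e 2}(G)$ forces $\partial_{Z}^{(2)}(x)=0$, and exactness of \eqref{long} then yields some $y\in H^{\e 1}_{\rm{ab}}(F\!\ra\be G)$ with $\pi^{(1)}(y)=x$. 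Consequently $\delta_{1}(y)=\pi^{(1)}(y)\cdot\varepsilon_{\lbe F}=x\cdot\varepsilon_{\lbe F}=r$, as required.

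The only nonformal step is the verification that $\partial^{(2)}$ preserves the unit class; once this is granted, the argument is purely diagrammatic. I anticipate this being the main (minor) obstacle to writing out a fully rigorous proof, since it requires unwinding the definition of $\partial^{(2)}$ on representative gerbes rather than just manipulating the action formalism.
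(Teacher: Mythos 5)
Your proof is correct and follows essentially the same route as the paper's: both directions rest on the compatibility square \eqref{comp}, the simply transitive actions of $H^{\e 2}(Z(F))$ and $H^{\e 2}(Z(G))$, and the exactness of \eqref{long}. The only difference is that you make explicit the fact that $\partial^{(2)}(\varepsilon_{F})=\varepsilon_{G}$, which the paper leaves implicit; it is indeed the tautological observation you describe.
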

\begin{proof} Let
$y\in H^{\e 1}_{\rm{ab}\,}(F\!\ra\be G)$. By the definition of
$\delta_{1}$, the commutativity of \eqref{comp} and the exactness of
\eqref{long}, we have
$$
\partial^{\e(2)}(\delta_{1}(y))=\partial_{Z}^{\e(2)}(\pi^{\e(1)}(y))\cdot
\varepsilon_{\lbe G}=0\cdot\varepsilon_{\lbe G}=\varepsilon_{\lbe
G}.
$$
Conversely, assume that $s=z\cdot\varepsilon_{\lbe F}\in H^{\e
2}(F)$ (where $z\in H^{\le 2}(Z(F))$) is such that
$\partial^{\e(2)}(s)=
\partial_{Z}^{\e(2)}(z)\cdot\varepsilon_{\lbe G}=\varepsilon_{\lbe G}$.
Then $\partial_{Z}^{\e(2)}(z)=0$, whence $z=\pi^{\e(1)}(y)$ for some
$y\in H^{\e 1}_{\rm{ab}\,}(F\!\ra\be G)$ by the exactness of
\eqref{long}. Thus $s=\pi^{\e(1)}(y)\cdot\varepsilon_{\lbe F}=
\delta_{1}(y)\in\img\,\delta_{1}$.
\end{proof}

\section{The main theorem}

Let $(F\!\be\overset{\partial}\ra\be G)$ be a quasi-abelian crossed
module on $E$. The \textit{second abelianization map} of
$(F\!\be\ra\be G)$ is the map
\begin{equation}\label{ab2}
{\rm{ab}}^{2}\colon H^{\le 2}(G)\ra H^{\e 2}_{\rm{ab}}(F\!\ra\be G)
\end{equation}
defined as follows: if $s\in H^{\le 2}(G)$ and $x$ is the unique
element of $H^{\le 2}(Z(G))$ such that $s=x\cdot\varepsilon_{\lbe
G}$ \eqref{act}, then ${\rm{ab}}^{2}(s)=j^{\e(2)}(x)$, where
$j^{\e(2)}\colon H^{\le 2}(Z(G))\ra H^{\e 2}_{\rm{ab}\,}(F\!\ra\be
G)$ is the map appearing in \eqref{long}.

 We will also need the map
\begin{equation}\label{t}
t:=t_{\rm{ab}}^{(2)}\circ{\rm{ab}}^{2}\colon H^{\le 2}(G)\ra H^{\e
2}(\cok\partial\e),
\end{equation}
where $t_{\rm{ab}}^{(2)}$ is the map appearing in \eqref{kamb}. If
$s$ and $x$ are as above, then
$t(s)=t_{\rm{ab}}^{(2)}(j^{\e(2)}(x))=c^{(2)}(x)$, where
$c^{(2)}\colon H^{\le 2}(Z(G))\ra H^{\e 2}(\cok\partial\e)$ is the
canonical map induced by the projection
$Z(G)\ra\cok\partial_{Z}=\cok\partial$ (see \eqref{kamb2}).

We now extract from \eqref{long} the subsequence
\begin{equation}\label{extr}
H^{\e 2}_{\rm{ab}\,}(F\!\ra\be G)\overset{\pi^{(2)}}\longrightarrow
H^{\e 3}(Z(F))\overset{\partial_{Z}^{(3)}}\longrightarrow H^{\e
3}(Z(G))\ra\dots.
\end{equation}
Then the following holds.

\begin{proposition} The sequence
$$
H^{\e 2}(F)\overset{\,\partial^{(2)}}\longrightarrow H^{\e
2}(G)\overset{{\rm{ab}}^{2}}\longrightarrow H^{\e
2}_{\rm{ab}\,}(F\!\ra\be G)\overset{\pi^{(2)}}\longrightarrow
H^{\e 3}(Z(F))\overset{\partial_{Z}^{(3)}}\longrightarrow H^{\e 3}(Z(G))\ra\dots.
$$
is exact.
\end{proposition}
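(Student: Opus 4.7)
The tail of the sequence, from $H^{\e 3}(Z(F))$ onwards, is literally the continuation of the long exact sequence \eqref{long} induced by \eqref{zseq}. Thus the substantive content is exactness at the two remaining positions, namely at $H^{\le 2}(G)$ and at $H^{\e 2}_{\rm{ab}\,}(F\!\ra\be G)$.

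For exactness at $H^{\e 2}_{\rm{ab}\,}(F\!\ra\be G)$, I would use the factorization ${\rm{ab}}^{\le 2}=j^{\e(2)}\circ\sigma$, where $\sigma\colon H^{\le 2}(G)\to H^{\le 2}(Z(G))$ is the inverse of the bijection $x\mapsto x\cdot\varepsilon_{\lbe G}$ provided by the simply-transitive action \eqref{act}. Since $\sigma$ is surjective, $\img\,{\rm{ab}}^{\le 2}=\img\,j^{\e(2)}$, and by the exactness of \eqref{long} this equals $\krn\,\pi^{\e(2)}$. So exactness at $H^{\e 2}_{\rm{ab}\,}(F\!\ra\be G)$ reduces to the exactness already available from \eqref{long}.

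For exactness at $H^{\le 2}(G)$, the key input is the compatibility diagram \eqref{comp}, together with the fact that $\partial^{\e(2)}$ sends basepoints to basepoints (which is built into its construction via \cite{Gi}, Proposition IV.3.1.5). Given $s\in H^{\le 2}(G)$, write $s=x\cdot\varepsilon_{\lbe G}$ with $x\in H^{\le 2}(Z(G))$ uniquely determined. Then ${\rm{ab}}^{\le 2}(s)=j^{\e(2)}(x)$ is trivial iff $x\in\krn\,j^{\e(2)}=\img\,\partial_{Z}^{(2)}$ by \eqref{long}, i.e.\ iff $x=\partial_{Z}^{(2)}(z)$ for some $z\in H^{\le 2}(Z(F))$. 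In that case, applying \eqref{comp} to the pair $(z,\varepsilon_{\lbe F})$ gives
$$
\partial^{\e(2)}(z\cdot\varepsilon_{\lbe F})=\partial_{Z}^{(2)}(z)\cdot\partial^{\e(2)}(\varepsilon_{\lbe F})=\partial_{Z}^{(2)}(z)\cdot\varepsilon_{\lbe G}=x\cdot\varepsilon_{\lbe G}=s,
$$
so $s\in\img\,\partial^{\e(2)}$. Conversely, if $s=\partial^{\e(2)}(r)$ for some $r\in H^{\le 2}(F)$, write $r=z\cdot\varepsilon_{\lbe F}$ with $z\in H^{\le 2}(Z(F))$; the same computation gives $s=\partial_{Z}^{(2)}(z)\cdot\varepsilon_{\lbe G}$, so $x=\partial_{Z}^{(2)}(z)\in\img\,\partial_{Z}^{(2)}=\krn\,j^{\e(2)}$, whence ${\rm{ab}}^{\le 2}(s)=j^{\e(2)}(x)=0$.

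The only point that really requires care is the basepoint-preservation of $\partial^{\e(2)}$, which I expect to be the main (modest) obstacle; everything else is a diagram chase through \eqref{long}, \eqref{act} and \eqref{comp}. Once that is in hand, the argument above produces the claimed exactness.
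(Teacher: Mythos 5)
Your proposal is correct and follows essentially the same route as the paper: the tail is read off from \eqref{long}, exactness at $H^{\e 2}_{\rm{ab}}(F\!\ra\be G)$ comes from the factorization of ${\rm{ab}}^{2}$ through $j^{\e(2)}$ together with the surjectivity of $x\mapsto x\cdot\varepsilon_{\lbe G}$, and exactness at $H^{\e 2}(G)$ is the same chase through \eqref{comp} and \eqref{long}. The one point you flag, that $\partial^{\e(2)}(\varepsilon_{\lbe F})=\varepsilon_{\lbe G}$, is used silently in the paper as well (it writes $\partial^{\e(2)}(y\cdot\varepsilon_{\lbe F})=\partial_{Z}^{\e(2)}(y)\cdot\varepsilon_{\lbe G}$ directly), and holds because the map induced by ${\rm{lien}}(\partial)$ sends the class of ${\rm{TORS}}(F)$ to that of ${\rm{TORS}}(G)$.
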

\begin{proof} Since \eqref{extr} is exact, we need only check the
exactness of the given sequence at $H^{\e 2}(G)$ and $H^{\e
2}_{\rm{ab}\,}(F\!\ra\be G)$. Exactness at $H^{\e
2}_{\rm{ab}\,}(F\!\ra\be G)$ is not difficult: we have
$\pi^{(2)}\circ\e {\rm{ab}}^{2}=\pi^{(2)}\be\circ j^{\e(2)}=0$, and
if $y\in H^{\e 2}_{\rm{ab}\,}(F\!\ra\be G)$ is such that
$\pi^{(2)}(y)=0$, then (by the exactness of \eqref{long}) there
exists an element $x\in H^{\e 2}(Z(G))$ such that
$y=j^{\e(2)}(x)={\rm{ab}}^{2}(x\cdot\varepsilon_{\lbe
G})\in\img\,{\rm{ab}}^{2}$. To check exactness at $H^{\e 2}(G)$, let
$t=y\cdot\varepsilon_{\lbe F}\in H^{\e 2}(F)$, where $y\in H^{\le
2}(Z(F))$. Then, by the commutativity of \eqref{comp},
$\partial_{Z}^{\e(2)}(y)$ is the unique element $x$ of $H^{\le
2}(Z(G))$ such that $\partial^{\e(2)}(t)=x\cdot \varepsilon_{\lbe
G}$. Consequently ${\rm{ab}}^{2}(\partial^{\e(2)}(t))=j^{\e(2)}(x)=
j^{\e(2)}(\partial_{Z}^{\e(2)}(y))=0$, since \eqref{long} is exact.
On the other hand, if $s\in H^{\le 2}(G)$ is such that
${\rm{ab}}^{2}(s)=j^{\e(2)}(x)=0$, where $x$ is the unique element
of $H^{\le 2}(Z(G))$ such that $s=x\cdot\varepsilon_{\lbe G}$, then
$x=\partial_{Z}^{\e(2)}(y)$ for some $y\in H^{\le 2}(Z(F))$ by the
exactness of \eqref{long}. Thus, again by the commutativity of
\eqref{comp},
$$
s=\partial_{Z}^{\e(2)}(y)\cdot\varepsilon_{\lbe
G}=\partial^{\e(2)}(y \cdot \varepsilon_{\lbe
F})\in\img\,\partial^{\e(2)}.
$$
This completes the proof.
\end{proof}

Combining the above proposition with Propositions 3.10, 3.17 and
3.18, we obtain the main result of this paper:

\begin{theorem} Let $(F\!\be\overset{\partial}\ra\be G)$ be
a quasi-abelian crossed module on $E$. Then
$$
\begin{array}{rcl}
1&\ra& H^{\e -1}(F\!\ra\be G)\ra H^{\le 0}(F)\ra
H^{\le 0}(G)\overset{{\rm{ab}}^{0}}\longrightarrow H^{\e
0}_{{\rm{ab}}\,}( F\!\ra\be G)\overset{\delta_{0}}\longrightarrow H^{\le 1}(F)\\
&\overset{\,\partial^{\e(1)}}\longrightarrow& H^{\le 1}(G)
\overset{{\rm{ab}}^{1}}\longrightarrow H^{\e 1}_{{\rm{ab}}\,}
(F\!\ra\be G) \overset{\delta_{1}}\longrightarrow H^{\e 2}(F)
\overset{\,\partial^{(2)}} \longrightarrow H^{\e
2}(G)\overset{{\rm{ab}}^{2}}\longrightarrow H^{\e
2}_{\rm{ab}\,}(F\!\ra\be G)\\
&\overset{\pi^{(2)}}
\longrightarrow& H^{\e 3}(Z(F))\overset{\partial_{Z}^{(3)}}
\longrightarrow H^{\e 3}(Z(G))\ra\dots
\end{array}
$$
is an exact sequence of pointed sets at every term except $H^{\e
1}_{{\rm{ab}}\,} (F\!\ra\be G)$, where a class $y\in H^{\e
1}_{{\rm{ab}}\,} (F\!\ra\be G)$ is in the image of ${\rm{ab}}^{1}$
if, and only if, $\delta_{1}(y)\in H^{\e 2}(F)^{\e\prime}$.\qed
\end{theorem}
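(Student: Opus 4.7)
The plan is straightforward assembly: Theorem 4.2 is the concatenation of four results already proved earlier in the paper, so the body of the proof consists of little more than ``Combine Propositions 3.10, 3.17, 3.18 and 4.1.'' No new gerbe-theoretic or torsor-theoretic input is required at this stage; all the delicate work has been dispatched in the component statements.

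First I would cite Proposition 3.10 to obtain exactness of the initial segment from $1$ up to and including the map ${\rm{ab}}^{1}\colon H^{\le 1}(G)\ra H^{\e 1}_{{\rm{ab}}\,}(F\!\ra\be G)$. The only feature of the long sequence that is not ordinary exactness of pointed sets is the condition at $H^{\e 1}_{{\rm{ab}}\,}(F\!\ra\be G)$, and this is supplied verbatim by Proposition 3.17, which characterises $\img\,{\rm{ab}}^{1}$ as the preimage under $\delta_{1}$ of the set of neutral classes $H^{\e 2}(F)^{\e\prime}$. Next I would invoke Proposition 3.18 to secure exactness at $H^{\e 2}(F)$ (namely $\krn\,\partial^{(2)}=\img\,\delta_{1}$). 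Finally, Proposition 4.1 completes the sequence on the right by giving exactness at $H^{\e 2}(G)$, at $H^{\e 2}_{\rm{ab}\,}(F\!\ra\be G)$, and at $H^{\e 3}(Z(F))$, after which the tail is simply the segment \eqref{extr} of the abelian long exact sequence \eqref{long} associated to the short exact sequence of complexes \eqref{zseq}.

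The ``main obstacle'' is therefore not mathematical but cosmetic: one must verify that the maps named in the theorem statement (in particular the $\delta_{1}$ of \eqref{delta}, the ${\rm{ab}}^{2}$ of \eqref{ab2}, the $\pi^{(2)}$ of \eqref{long} and the coboundary $\partial_{Z}^{(3)}$) are literally the maps appearing in Propositions 3.17, 3.18 and 4.1. Since those propositions were phrased using precisely these maps, the matching is immediate and the assembly goes through with no further argument.
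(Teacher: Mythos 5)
Your proposal is correct and is exactly the paper's own argument: Theorem 4.2 is obtained by concatenating Proposition 3.10 (the initial segment through ${\rm{ab}}^{1}$), Proposition 3.17 (the neutrality criterion at $H^{\e 1}_{{\rm{ab}}}$), Proposition 3.18 (exactness at $H^{\e 2}(F)$) and Proposition 4.1 (the tail from $H^{\e 2}(G)$ onward). Nothing further is needed.
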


\begin{remark} The exact sequence of the theorem is compatible with
inverse images, i.e., if $u\colon E^{\e\prime}\ra E$ is a morphism
of sites, then there exists an exact commutative diagram
$$
\xymatrix{\dots\ar[r]^(0.45){\partial^{(1)}} &H^{\le 1}(E,
G\e)\ar[r]^(0.43){\rm{ab}^{1}}\ar[d]& H^{\e 1}_{\rm{ab}}(E,F\ra
G\e)\ar[r]^(0.55){\delta_{1}}\ar[d]& H^{\le
2}(E,F)\ar[r]^(0.58){\partial^{(2)}}\ar[d]&\dots\\
\dots\ar[r]^(0.3){(\partial^{(1)})^{\e\prime}} &H^{\le
1}(E^{\e\prime}, u^{\lbe
*}G\e)\ar[r]^(0.4){(\be\rm{ab}^{1}\be)^{\e\prime}} & H^{\e
1}_{\rm{ab}}(E^{\e\prime},u^{\lbe *}F\ra u^{\lbe
*}G\e)\ar[r]^(0.57){\delta_{1}^{\e\prime}}& H^{\le
2}(E^{\e\prime},u^{\lbe
*}F)\ar[r]^(0.7){(\partial^{(2)})^{\e\prime}}&\dots}
$$
whose rows are the exact sequences of the theorem over $E$ and over
$E^{\e\prime}$ and whose vertical maps are given by \eqref{pbck} and
\cite{Gi}, Definition V.1.5.1, p.316. This follows from the
definitions of the various horizontal maps involved, Remark 3.9(b)
and \cite{Gi}, Proposition V.1.5.2(ii), p.317.
\end{remark}

Regarding the map ${\rm{ab}}^{2}$, the following holds.

\begin{lemma} $H^{\e 2}(G)^{\prime}\subset\krn{\rm{ab}}^{2}$.
\end{lemma}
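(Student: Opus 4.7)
The plan is to unwind the definition of ${\rm{ab}}^2$ and combine Corollary 3.8 with the commutative square \eqref{big}. Let $s\in H^{2}(G)^{\prime}$ and write $s=x\cdot\varepsilon_{\lbe G}$ with $x\in H^{2}(Z(G))$ the unique element given by the simply-transitive action \eqref{act}; by definition ${\rm{ab}}^{2}(s)=j^{\e(2)}(x)$. So the task reduces to showing $j^{\e(2)}(x)=0$, and by the exactness of the long exact sequence \eqref{long} this is equivalent to $x\in\img\bigl(\partial_{Z}^{(2)}\colon H^{2}(Z(F))\to H^{2}(Z(G))\bigr)$.

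First I would apply Corollary 3.8, which says precisely that the image of $d_{\lbe G}\colon H^{1}({\rm{Inn}}(G))\to H^{2}(Z(G))$ consists of those $x\in H^{2}(Z(G))$ with $x\cdot\varepsilon_{\lbe G}$ neutral. Since $s$ is neutral, there exists $p\in H^{1}({\rm{Inn}}(G))$ with $x=d_{\lbe G}(p)$.

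Next I would use the commutative diagram \eqref{big}, whose left vertical map $\overline{\partial}^{\e(1)}\colon H^{1}({\rm{Inn}}(F))\to H^{1}({\rm{Inn}}(G))$ is a bijection (this is the key input from the Giraud theory, valid because $(F\to G)$ is quasi-abelian). Setting $q=\overline{\partial}^{\e(1)-1}(p)\in H^{1}({\rm{Inn}}(F))$, the commutativity of \eqref{big} yields
$$
x=d_{\lbe G}(p)=d_{\lbe G}\bigl(\overline{\partial}^{\e(1)}(q)\bigr)=\partial_{Z}^{(2)}(d_{\lbe F}(q)),
$$
so $x\in\img\partial_{Z}^{(2)}$. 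By the remark of the previous paragraph, $j^{\e(2)}(x)=0$, hence ${\rm{ab}}^{2}(s)=0$.

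The whole argument is essentially a two-line diagram chase; the only delicate point is making sure that Corollary 3.8 really applies here, i.e.\ that $d_{\lbe G}$ does capture exactly the neutrality condition, but this is stated explicitly in that corollary. No serious obstacle is expected — the bijectivity of $\overline{\partial}^{\e(1)}$ (which is where the quasi-abelian hypothesis enters nontrivially) has already been used to set up \eqref{big}, so everything is in place.
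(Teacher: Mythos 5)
Your argument is correct and follows essentially the same route as the paper's proof: both use the neutrality of $s$ to write $x=d_{\lbe G}(p)$ for some $p\in H^{1}({\rm{Inn}}(G))$ (via Corollary 3.8, equivalently the exactness of \eqref{bn} together with Proposition 3.7) and then use the bijectivity of $\overline{\partial}^{\e(1)}$ and the commutativity of \eqref{big} to land $x$ in $\img\partial_{Z}^{(2)}$. The only difference is in the bookkeeping at the end: the paper first reduces the claim to $H^{\e 2}(G)^{\e\prime}\subset\img\partial^{\e(2)}$ by citing Theorem 4.2 and then transports the conclusion back via \eqref{comp}, whereas you conclude ${\rm{ab}}^{2}(s)=j^{\e(2)}(x)=0$ directly from the exactness of \eqref{long} at $H^{\e 2}(Z(G))$, which is marginally more self-contained since it bypasses Proposition 4.1 and diagram \eqref{comp} altogether.
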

\begin{proof} By the theorem, we need to check that $H^{\e
2}(G)^{\e\prime}$ is contained in the image of $\partial^{\e(2)}$.
Let $s\in H^{\e 2}(G)^{\e\prime}$. By the exactness of \eqref{bn}
and Proposition 3.7, there exists an element $p\in H^{\le
1}({\rm{Inn}}(G))$ such that $s=d_{\le G}(p)\cdot \varepsilon_{\lbe
G}$. On the other hand, by the surjectivity of
$\overline{\partial}^{\e(1)}$ and the commutativity of diagram
\eqref{big}, there exists an element $q\in H^{\le 1}({\rm{Inn}}(F))$
such that
$$
d_{\le G}(p)=d_{\le G}\big(\,
\overline{\partial}^{\e(1)}\lbe(q)\big)=\partial_{Z}^{\e(2)}
(d_{\le F}(q)).
$$
Thus, by the commutativity of \eqref{comp},
$$
s=d_{\le G}(p)\cdot\varepsilon_{\lbe G}=\partial^{\e(2)} (d_{\le
F}(q)\cdot\varepsilon_{\lbe F})\in\img\,\partial^{\e(2)}.
$$
\end{proof}

Since $\partial^{\e(2)}\colon H^{\e 2}(F)\ra H^{\e 2}(G)$ maps
$H^{\e 2}(F)^{\e\prime}$ into $H^{\e 2}(G)^{\e\prime}$, Theorem 4.2
and the above lemma yield inclusions
$$
\partial^{\e(2)}(H^{\e 2}(F)^{\e\prime})\subset H^{\e 2}(G)^{\e\prime}\subset
\krn{\rm{ab}}^{2}=
\partial^{\e(2)}(H^{\e 2}(F)).
$$
Thus, the following is an immediate consequence of Theorem 4.2.

\begin{corollary} Let $(F\!\ra\be G)$ be a quasi-abelian crossed
module on $E$ such that every class of $H^{\le 2}(F)$ is neutral.
Then
\begin{enumerate}
\item[(i)] the abelianization map ${\rm{ab}}^{1}\colon H^{\le
1}(G)\ra H^{\e 1}_{{\rm{ab}}} (F\!\ra\be G)$ is surjective, and
\item[(ii)] a class $s\in H^{\e 2}(G)$ is neutral if, and only if,
${\rm{ab}}_{\lbe G}^{2}(s)=0$.\qed
\end{enumerate}
\end{corollary}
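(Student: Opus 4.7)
The plan is to deduce both assertions directly from the exactness statement in Theorem 4.2, combined with Lemma 4.4 and the chain of inclusions displayed immediately before the corollary, namely
$$
\partial^{\e(2)}(H^{\e 2}(F)^{\e\prime})\subset H^{\e 2}(G)^{\e\prime}\subset\krn{\rm{ab}}^{2}=\partial^{\e(2)}(H^{\e 2}(F)).
$$
The hypothesis that every class of $H^{\le 2}(F)$ is neutral means $H^{\le 2}(F)=H^{\le 2}(F)^{\e\prime}$, and all that remains is to feed this equality into the appropriate place in each part.

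For part (i), I would invoke the ``exactness clause'' of Theorem 4.2 at $H^{\e 1}_{{\rm{ab}}}(F\!\ra\be G)$: a class $y\in H^{\e 1}_{{\rm{ab}}}(F\!\ra\be G)$ lies in the image of ${\rm{ab}}^{1}$ if and only if $\delta_{1}(y)\in H^{\le 2}(F)^{\e\prime}$. Since by hypothesis $H^{\le 2}(F)^{\e\prime}=H^{\le 2}(F)$, this condition is automatic for every $y$, so ${\rm{ab}}^{1}$ is surjective.

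For part (ii), one direction is Lemma 4.4, which gives $H^{\e 2}(G)^{\e\prime}\subset\krn{\rm{ab}}^{2}$. For the converse, Theorem 4.2 identifies $\krn{\rm{ab}}^{2}=\img\,\partial^{\e(2)}=\partial^{\e(2)}(H^{\e 2}(F))$. Using the hypothesis to rewrite $H^{\e 2}(F)=H^{\e 2}(F)^{\e\prime}$, and recalling (from the discussion preceding \eqref{comp} in Section 3) that $\partial^{\e(2)}$ carries $H^{\e 2}(F)^{\e\prime}$ into $H^{\e 2}(G)^{\e\prime}$, I obtain $\krn{\rm{ab}}^{2}=\partial^{\e(2)}(H^{\e 2}(F)^{\e\prime})\subset H^{\e 2}(G)^{\e\prime}$, which together with Lemma 4.4 gives the desired equality $\krn{\rm{ab}}^{2}=H^{\e 2}(G)^{\e\prime}$ and hence the ``if and only if'' in (ii).

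As the statement itself already advertises, there is no real obstacle here; both parts are formal consequences of material that is in place by the end of Section 4. The only thing to watch is notation: one needs to unwind that ``$s$ is neutral'' means $s\in H^{\e 2}(G)^{\e\prime}$ by definition, so that the equality $\krn{\rm{ab}}^{2}=H^{\e 2}(G)^{\e\prime}$ obtained above is exactly the equivalence asserted in (ii).
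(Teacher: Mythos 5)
Your argument is correct and is essentially identical to the paper's: the author likewise derives (i) from the exactness clause of Theorem 4.2 at $H^{\e 1}_{{\rm{ab}}}(F\!\ra\be G)$ together with the hypothesis $H^{\le 2}(F)=H^{\le 2}(F)^{\e\prime}$, and derives (ii) by observing that this hypothesis collapses the displayed chain $\partial^{\e(2)}(H^{\e 2}(F)^{\e\prime})\subset H^{\e 2}(G)^{\e\prime}\subset\krn{\rm{ab}}^{2}=\partial^{\e(2)}(H^{\e 2}(F))$ into equalities. No gaps; your unwinding of ``neutral'' as membership in $H^{\e 2}(G)^{\e\prime}$ is exactly the intended reading.
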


\section{Applications}

Let $S$ be a scheme and let $S_{\e\rm{fl}}$ be the small fppf site
over $S$. An $S$-group scheme $G$ is called {\it reductive}
(respectively, {\it semisimple}) if it is affine and smooth over $S$
and its geometric fibers are  {\it connected} reductive
(respectively, semisimple) algebraic groups \cite{SGA3}, XIX,
Definition 2.7. The composition $\partial\colon
\widetilde{G}\twoheadrightarrow G^{\der}\hookrightarrow G$ defines a quasi-abelian crossed module
$\big(\widetilde{G}\be\overset{\partial}\ra\be G\e\big)$ on
$S_{\e\rm{fl}}$ (see Example 3.3). We have $\krn\partial=\mu$ and
$\cok\partial=G^{\tor}$ as sheaves on $S_{\e\rm{fl}}$, where $\mu$
is the fundamental group of $G$ and $G^{\tor}=G/G^{\der}$ is the
coradical of $G$ \cite{SGA3}, XXII, 6.2. Set $H^{\e
i}_{\rm{ab}}(S_{\e\rm{fl}},G)=H^{\e
i}_{\rm{ab}}(S_{\e\rm{fl}},\widetilde{G}\ra G)$, which will also be
denoted by $H^{\e i}_{\rm{ab}}(G)$ to simplify some statements. If
$G$ has trivial fundamental group (respectively, if $G$ is
semisimple), then $H^{\e i}_{\rm{ab}}(G)=H^{\e i}(G^{\tor}\e)$
(respectively, $H^{\e i}_{\rm{ab}}(G)=H^{\e i+1}(\mu)$). See
Examples 2.3. If $K$ is a field and $G$ is a (connected) reductive
algebraic group over $K$, $H^{\le 1}(K,G)$ will denote the first
Galois cohomology set of $G$. Note that there exists a canonical
bijection $H^{\le 1}(K,G)\simeq H^{\le 1}(K_{\rm{fl}},G)$
\cite{Mi1}, Remark III.4.8(a), p.123\e\footnote{In fact, the natural
map $H^{\le 1}(S_{\rm{\acute{e}t}},G)\ra H^{\e 1}(S_{\e\rm{fl}},G)$
is bijective for any $S$ by the smoothness of $G$. See [loc.cit.].}.
If $G$ is, in addition, commutative and $i\geq 1$, then $H^{\le
i}(K,G)$ will denote the $i$-th Galois cohomology group of $G$.

\smallskip

By \eqref{-1} and Theorem 4.2, the following holds.

\begin{theorem} Let $G$ be a reductive group scheme over a scheme
$S$. Then there exists a sequence of flat (fppf) cohomology sets
$$
\begin{array}{rcl}
1&\longrightarrow&\mu(S)\ra\widetilde{G}(S)\ra G (S)
\overset{\rm{ab}^{0}}\longrightarrow H^{\e 0}_{\rm{ab}}(G
\e)\overset{\delta_{0}}\lra  H^{\le 1}\big(\widetilde{G}\e\big)
\overset{\,\partial^{\e(1)}}\longrightarrow H^{\le 1}(G\e)\\
&\overset{\rm{ab}^{1}}\longrightarrow& H^{\e 1}_{\lbe\rm{ab}}(G
\e)\overset{\delta_{\le 1}}\lra H^{\le 2}\big(\widetilde{G
}\e\big)\overset{\,\partial^{\e(2)}}\longrightarrow H^{\e 2}(G
\e)\overset{\rm{ab}^{\lbe 2}}\longrightarrow H^{\e 2}_{\rm{ab}}(G
\e)\ra H^{\e 3}\big(Z\big(
\widetilde{G}\e\big)\big)\\
&\longrightarrow& H^{\e 3}(Z(G\e))\ra\dots,
\end{array}
$$
which is an exact sequence of pointed sets at every term except
$H^{\e 1}_{{\rm{ab}}} (G)$, where a class $y\in H^{\e
1}_{{\rm{ab}}}(G)$ is in the image of $\,{\rm{ab}}^{1}$ if, and only
if, $\delta_{1}(y)\in H^{\le 2}\big(\widetilde{G
}\e\big)^{\prime}$.\qed
\end{theorem}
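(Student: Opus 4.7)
The plan is to deduce Theorem 5.1 directly from Theorem 4.2 applied to the crossed module $\big(\widetilde{G}\overset{\partial}\ra G\big)$ on the site $E=S_{\e\rm{fl}}$. The only real work is to verify that all hypotheses of Theorem 4.2 are met for this particular crossed module and then to identify each term in the abstract exact sequence with the concrete terms appearing in the statement of Theorem 5.1.

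First, I would invoke Example 3.3 to record that $\big(\widetilde{G}\overset{\partial}\ra G\big)$ is a quasi-abelian crossed module on $S_{\e\rm{fl}}$: the three conditions of Definition 3.2 hold because $Z(G)$ acts trivially on $\widetilde{G}$, because $G=G^{\der}Z(G)$ by \cite{SGA3}, XXII, 6.2.3, and because the fppf map $\partial^{\,\prime}\colon Z\big(\widetilde{G}\e\big)\to Z(G^{\der})$ is surjective. Hence Theorem 4.2 applies with $F=\widetilde{G}$.

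Next, I would translate the terms. By \eqref{-1} we have $H^{\e -1}\big(\widetilde{G}\!\ra\be G\big)=H^{\e 0}(\krn\partial)=\mu(S)$, since $\krn\partial=\mu$ as noted at the beginning of Section~5. The terms $H^{\le 0}\big(\widetilde{G}\big)=\widetilde{G}(S)$ and $H^{\le 0}(G)=G(S)$ are the usual groups of sections. The remaining terms $H^{\le i}_{\rm{ab}}(G)$, $H^{\le i}\big(\widetilde{G}\big)$, $H^{\le i}(G)$, $H^{\e i}\big(Z\big(\widetilde{G}\e\big)\big)$ and $H^{\e i}(Z(G))$ are already defined in the notation of Section~5. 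The maps ${\rm{ab}}^{i}$ and $\delta_{i}$ are exactly those constructed in Sections 3 and 4 for the general quasi-abelian crossed module $\big(\widetilde{G}\!\ra\be G\big)$.

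With these identifications in place, the exact sequence of Theorem 4.2 becomes, verbatim, the sequence displayed in Theorem 5.1, and the exceptional condition at $H^{\e 1}_{\rm{ab}}(G)$ (that $y$ lies in the image of ${\rm{ab}}^{1}$ iff $\delta_{1}(y)\in H^{\le 2}\big(\widetilde{G}\e\big)^{\prime}$) is precisely the exceptional condition in Theorem 4.2. No further argument is needed. The only potential subtlety is to confirm that the five-term tail $H^{\e 2}_{\rm{ab}}(G)\to H^{\e 3}\big(Z\big(\widetilde{G}\e\big)\big)\to H^{\e 3}(Z(G))\to\dots$ indeed continues with the map $\pi^{(2)}$ (followed by $\partial_{Z}^{(3)}$, etc.) from \eqref{long}; this is built into the statement of Theorem 4.2, so nothing further is required. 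I expect no genuine obstacle: the entire content is a bookkeeping specialization of the already-established Theorem 4.2.
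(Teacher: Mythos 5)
Your proposal is correct and follows the paper's own route exactly: the paper derives Theorem 5.1 by specializing Theorem 4.2 to the quasi-abelian crossed module $\big(\widetilde{G}\to G\big)$ of Example 3.3 and identifying $H^{-1}\big(\widetilde{G}\to G\big)$ with $\mu(S)$ via \eqref{-1}. Nothing further is needed.
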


\smallskip

Recall that, if $G$ is a reductive group scheme, $G^{\e\rm{ad}}$ is
the standard notation for $\text{Inn}(G)$.

\begin{definition} A scheme $S$ is called {\it of Douai type}
if, for every semisimple and simply-connected $S$-group scheme $G$,
the coboundary map
$$
d_{\le G}\colon H^{1}(S_{\e\rm{fl}},G^{\e\rm{ad}})\ra H^{\le
2}(S_{\e\rm{fl}},Z(G))
$$
(induced by the central extension $1\ra Z(G)\ra G\ra
G^{\e\rm{ad}}\ra 1$) is surjective. Equivalently (see Corollary
3.8), every class of $H^{\le 2}(S_{\e\rm{fl}},G)$ is neutral.
\end{definition}

If $S=\spec A$ is affine, then we will also say that $A$ is of Douai
type.

\smallskip

\begin{remark} By Remark 3.9(a), the map $d_{G}$ appearing in the above
definition can be identified with the first abelianization map
${\rm{ab}}^{1}$ for the adjoint group $G^{\e\rm{ad}}$.
\end{remark}

\begin{examples} The following are examples of schemes of Douai
type.

\begin{enumerate}

\item[(i)] $S=\spec K$, where $K$ is a complete
discretely-valued field with finite residue field. See \cite{Do},
Chapter VI, Theorems 1.4, p.80, 1.6, p.81, and 2.1, p.87. See also
\cite{Do1}, Theorem 1.1 and Remark on p.322.

\item[(ii)] $S=\spec K$, where $K$ is a global field, i.e.,
$K$ is either a number field or a function field in one variable over a
finite field. See \cite{Do}, Theorem VIII.1.2,
p.108. See also \cite{Do2}.

\item[(iii)] $S$ is a nonempty open subscheme of either the spectrum
of the ring of integers of a number field or a smooth, complete and
irreducible curve over a finite (respectively, algebraically closed)
field. See \cite{Do3}, Theorem 1.1 and Remark (b), p.326, and note
that Lemma 1.1 from [op.cit.] holds over any $S$ as above by, e.g.,
\cite{Mi}, proof of Proposition II.2.1, p.202.

\item[(iv)] $S=\spec K$, where $K$ is a function field in one
variable over a quasi-finite field $k$ of positive characteristic
which is algebraic over its prime subfield $k_{\e 0}$ and satisfies
$[k\e\colon\be k_{\e 0}]=\prod p^{\e n_{p}}$, where $n_{p}<\infty$
for every prime $p$. See \cite{Do5}, proof of Proposition 1.1 and
Remark 1.2.

\item[(v)] $S$ is a smooth, complete and irreducible curve
over a quasi-finite field $k$ with function field $K$, where $k$ and
$K$ are as in (iv). See \cite{Do5}, proof of Lemma 3.1 and Remark
3.2(1) and (3).

\item[(vi)] $S=\spec K$, where $K$ is a field of characteristic
zero and of (Galois) cohomological dimension $\leq 2$ such that, for
central simple algebras over finite extensions of $K$, exponent and
index coincide. Indeed, it is shown in \cite{CTGP}, proof of Theorem
2.1(a), that $d_{\le G}\colon H^{1}(K_{\rm{fl}},G^{\e\rm{ad}})\ra
H^{2}(K_{\rm{fl}},Z(G))$ is surjective for every semisimple and
simply-connected $K$-group $G$. Note that, by \cite{J}, examples of
such fields include the ``fields of types (gl), (ll) and (sl)"
considered in \cite{CTGP} (see below for the definitions of types
(gl) and (ll)). See [op.cit.], Theorems 1.3\e -1.5).

\item[(vii)]  $S$ is a regular and integral two-dimensional scheme
equipped with a proper birational morphism $S\ra\spec A$, where $A$
is an excellent, henselian, two-dimensional local domain with
algebraically closed residue field of characteristic $0$. See
\cite{Do7}.

\item[(viii)] $S$ is a projective, smooth and geometrically
irreducible curve over a $p$-adic field. See the forthcoming paper
\cite{Do8}.

\end{enumerate}

\end{examples}

\begin{theorem} Let $S$ be a scheme of Douai type and let $G$ be a
reductive group scheme over $S$.
\begin{enumerate}
\item[(i)] The group $H^{\e 0}_{\rm{ab}}(S_{\e\rm{fl}},G \e)$ acts on the right
on the set $H^{\le 1}\big(S_{\e\rm{fl}},\Gtil\e\big)$ compatibly
with the map $\delta_{\le 0}\colon H^{\e
0}_{\rm{ab}}(S_{\e\rm{fl}},G \e)\ra H^{\le
1}\big(S_{\e\rm{fl}},\Gtil\e\big)$ and there exists an exact
sequence of pointed sets
$$
1\ra H^{\le 1}\big(S_{\e\rm{fl}},\Gtil\e\big)/H^{\e
0}_{\rm{ab}}(S_{\e\rm{fl}},G \e)\overset{\bar{\partial}^{\e(1)}}\lra
H^{\le 1}(S_{\e\rm{fl}},G\e)\overset{\rm{ab}^{1}}\longrightarrow
H^{\e 1}_{\lbe\rm{ab}}(S_{\e\rm{fl}},G \e)\ra 1,
$$
where the map $\bar{\partial}^{\e(1)}$ (which is induced by
$\partial^{\e(1)}$) is injective.
\item[(ii)] A class $\xi\in H^{\e 2}(S_{\e\rm{fl}},G)$ is neutral
if, and only if, ${\rm{ab}}^{2}_{\lbe G}(\xi)=0$.
\end{enumerate}
\end{theorem}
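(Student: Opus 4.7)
The plan is to obtain Theorem 5.5 essentially as a packaged consequence of Corollary 4.5 applied to the quasi-abelian crossed module $(\Gtil\overset{\partial}\ra G)$ on $S_{\e\rm{fl}}$ of Example 3.3, combined with Proposition 3.14 and the action-theoretic observations in Remarks 2.5(c) and 3.9(b). The crucial preliminary is the following: since $\Gtil$ is semisimple and simply-connected over $S$ and $S$ is of Douai type, the equivalent formulation of Definition 5.2 (recorded via Corollary 3.8) says that every class of $H^{\le 2}(S_{\e\rm{fl}},\Gtil\e)$ is neutral. Thus the hypothesis of Corollary 4.5 is met for our crossed module.

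Assertion (ii) is then immediate: it is literally Corollary 4.5(ii) applied to $(\Gtil\ra G)$, using the shorthand $H^{\e 2}_{\rm{ab}}(G)=H^{\e 2}_{\rm{ab}}\lbe(\Gtil\ra G)$ and ${\rm{ab}}^{2}={\rm{ab}}^{2}_{\lbe G}$ introduced at the beginning of Section 5.

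For assertion (i) I would first transfer the right action of $H^{\e 0}(\Gtil\ra G)$ on $H^{\e 1}(\Gtil)$ of Remark 2.5(c) along the bijection $\varphi_{0}$ of \eqref{bij} to obtain a right action of $H^{\e 0}_{\rm{ab}}(G)$ on $H^{\e 1}(\Gtil\e)$; the identity $\delta_{\le 0}^{\e\prime}(c_{1}c_{2})=\delta_{\le 0}^{\e\prime}(c_{1})\cdot c_{2}$ from Remark 2.5(c), combined with the definition $\delta_{\le 0}=\delta_{\le 0}^{\e\prime}\circ\varphi_{0}$, gives the desired compatibility with $\delta_{\le 0}$ (this is also the content of Remark 3.9(b)). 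The surjectivity of ${\rm{ab}}^{1}$ now follows from Corollary 4.5(i), yielding exactness at $H^{\e 1}_{\rm{ab}}(G)$. The injectivity of $\bar{\partial}^{\e(1)}$ and the identification of its image with $\krn{\rm{ab}}^{1}$ are exactly the content of Proposition 3.14(b) applied to $(\Gtil\ra G)$. Assembling these three facts produces the displayed exact sequence of pointed sets.

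There is no substantial obstacle: all the analytic work has been done in Sections 3 and 4, and the only genuinely new input is the observation that ``$S$ of Douai type'' feeds the hypothesis of Corollary 4.5 through the simply-connected cover $\Gtil$. The mildest point to verify carefully is that the transported action on $H^{\e 1}(\Gtil\e)$ makes the sequence exact as a sequence of pointed sets at $H^{\e 1}(\Gtil)/H^{\e 0}_{\rm{ab}}(G)$, but this is built into the statement of Proposition 3.14(b) (the quotient is formed precisely by the stabilizer structure described in part (a) of that proposition, which plays no further role here).
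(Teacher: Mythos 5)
Your proposal is correct and follows essentially the same route as the paper: the author likewise deduces assertion (ii) and the surjectivity of ${\rm{ab}}^{1}$ from Corollary 4.5 together with Definition 5.2 (applied through the simply-connected group $\widetilde{G}$), takes the action from Remark 3.9(b), and obtains the exactness of the sequence in (i) from Proposition 3.14(b) plus the surjectivity of ${\rm{ab}}^{1}$.
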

\begin{proof} The surjectivity of $\rm{ab}^{1}$ and assertion (ii) are
immediate from Corollary 4.5 and Definition 5.2. The action
mentioned in (i) is defined in Remark 3.9(b), and the exactness of
the sequence in (i) follows from Proposition 3.14(b) and the
surjectivity of $\rm{ab}^{1}$.
\end{proof}

\begin{remarks} \indent

\begin{enumerate}

\item[(a)] The exact sequence in part (i) of the theorem is
compatible with inverse images, i.e., if $S^{\e\prime}\ra S$ is a
morphism of schemes of Douai type, then the diagram
$$
\xymatrix{1\ra H^{\e 1}\be\big(S_{\e\rm{fl}},\Gtil\e\big)/H^{\e
0}_{\rm{ab}}(S_{\e\rm{fl}},G\e)\ar[r]\ar[d]& H^{\e
1}\be(S_{\e\rm{fl}},G\e)\ar[r]\ar[d]& H^{\e
1}_{\rm{ab}}(S_{\e\rm{fl}},G\e)\ar[d]\ar[r]&1\\
1\ra H^{\e 1}\be\big(S^{\e\prime}_{\e\rm{fl}},\Gtil\e\big)/H^{\e
0}_{\rm{ab}}(S^{\e\prime}_{\rm{fl}},G\e)\ar[r]& H^{\e
1}\be(S^{\e\prime}_{\e\rm{fl}},G\e)\ar[r]& H^{\e
1}_{\rm{ab}}(S^{\e\prime}_{\rm{fl}},G\e)\ar[r]&1}
$$
commutes. This follows from Remark 4.3, and the fact that the action
of $H^{\e 0}_{\rm{ab}}(S_{\e\rm{fl}},G\e)$ on $H^{\e
1}\be\big(S_{\e\rm{\acute{e}t}},\Gtil\e\big)$ is compatible with
inverse images. See Remark 3.9(b).

\item[(b)] Let $S$ and $G$ be as in the theorem and let $\mathcal C^{\rm{ab}}$
be the gr-stack associated to $\big(Z\big(\Gtil\big)\!\ra\be
Z(G)\big)$, so that there exists a bijection $H^{\e
1}_{{\rm{ab}}}(S_{\e\rm{fl}},G\e)\simeq H^{\le 1}(\mathcal
C^{\rm{ab}})$. For each class $\xi\in H^{\e
1}_{{\rm{ab}}}(S_{\e\rm{fl}},G\e)$, let $\mathcal
P^{\e\rm{ab}}_{\!\!_{\xi}}$ be a $\mathcal C^{\rm{ab}}$-torsor representing
its image in $H^{\le 1}(\mathcal C^{\rm{ab}})$ and let $P_{\!\be_{\xi}}$ be
a lift of $\mathcal
P^{\e\rm{ab}}_{\!\!_{\xi}}$ to $G$. Then part (i) of
the theorem and Corollary 3.15 yield a (non-canonical) bijection
$$
H^{\e 1}(S_{\e\rm{fl}},G\e)\simeq\coprod_{\xi\e\in\e H^{\e
1}_{{\rm{ab}}}\be(S_{\rm{fl}},G\e)}H^{\le
1}\big(S_{\e\rm{fl}},\!\!\phantom{.}^{P_{\!\!_{\xi}}}\lbe \Gtil\e\big)/H^{\e
0}_{\rm{ab}}(S_{\e\rm{fl}},G \e).
$$

\item[(c)] The theorem holds if $S$ is any scheme
and $G$ is a reductive group scheme over $S$ such that every class
of $H^{\le 2}\big(S_{\e\rm{fl}},\widetilde{G}\e\big)$ is neutral.
For example, if $S$ is a ruled surface of the type considered in
\cite{Do4}, Corollary 3.14, p.76\e\footnote{ We do not know if these
schemes are of Douai type.}, and $G$ is {\it split}, then the
theorem holds for $G$ (and part (i) generalizes [op.cit.], Corollary
3.15). However, we have chosen to work with schemes $S$ of Douai
type because we want our statements to apply uniformly to all
reductive group schemes over $S$.

\item[(d)] By Example 5.4(iii), the theorem holds if $S$ is
the spectrum of the ring of integers of a number field. Thus
Corollary 1.2 of the Introduction is contained in part (i) of the
theorem.
\end{enumerate}
\end{remarks}

Let $S$ be a scheme and let $L$ be an $S_{\e\rm{fl}}$-lien which is
locally represented by a reductive group scheme over $S$. Let
${\rm{ab}}_{L}^{2}\colon H^{\e 2}(S_{\e\rm{fl}},L)\ra
H^{2}_{\rm{ab}}(S_{\e\rm{fl}},L)$ be the map defined in \cite{Do6},
p.23. It is not difficult to check that if $L=\text{lien}(G)$ is
represented by a group $G$ of the topos $\widetilde{S}_{\e\rm{fl}}$,
${\rm{ab}}_{L}^{2}$ coincides with the map ${\rm{ab}}_{\lbe G}^{2}$
considered above. The following result generalizes \cite{Bor2},
Theorem 5.5.

\begin{corollary} Let $S$ be a scheme of Douai type and
let $L$ be an $S_{\e\rm{fl}}$-lien which is locally represented by a
reductive $S$-group scheme. Then a class $\xi\in H^{\e
2}(S_{\e\rm{fl}},L)$ is neutral if, and only if,
${\rm{ab}}_{L}^{2}(\xi)=0$.
\end{corollary}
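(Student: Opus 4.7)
The plan is to reduce the statement to Theorem~5.5(ii) by producing a reductive $S$-group scheme $G$ with $\text{lien}(G) = L$; once such a $G$ is exhibited, the identification ${\rm{ab}}_{L}^{2} = {\rm{ab}}_{\lbe G}^{2}$ noted in the paragraph preceding the corollary will translate both the hypothesis and the conclusion into precisely the assertion of Theorem~5.5(ii) applied to $G$.

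First I would dispose of the direct implication. Suppose $\xi$ is neutral. Then $\xi = \varepsilon_{\mathcal G}$ for a gerbe $\mathcal G$ equivalent to $\text{TORS}(H)$ for some group $H$ of $\widetilde{S}_{\e\rm{fl}}$ with $\text{lien}(H) = L$. The local representability hypothesis on $L$, together with the fact that ``reductive'' is a local property for fppf descent, forces $H$ to be a reductive $S$-group scheme. Applying the compatibility ${\rm{ab}}_{L}^{2} = {\rm{ab}}_{H}^{2}$, the definition \eqref{ab2}, and the fact that $\varepsilon_{H} = 0 \cdot \varepsilon_{H}$ in the action \eqref{act}, we find ${\rm{ab}}_{L}^{2}(\xi) = {\rm{ab}}_{H}^{2}(\varepsilon_{H}) = j^{\e(2)}(0) = 0$.

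For the converse, the central step is to produce a reductive $S$-group scheme $G$ with $\text{lien}(G) = L$ --- note that such a $G$ need not come from neutralizing $\xi$, since \emph{a priori} $\xi$ may well be non-neutral. Given any such $G$, the compatibility ${\rm{ab}}_{L}^{2} = {\rm{ab}}_{\lbe G}^{2}$ converts the hypothesis ${\rm{ab}}_{L}^{2}(\xi) = 0$ into ${\rm{ab}}_{\lbe G}^{2}(\xi) = 0$, whereupon Theorem~5.5(ii), applied to $G$ over the Douai-type base $S$, yields that $\xi$ is neutral.

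The main obstacle is thus the existence of a global reductive representative $G$ of $L$. When $S = \spec K$ for a field $K$ this is standard via the existence of quasi-split forms. For a general base $S$ one invokes the classification of reductive group schemes in \cite{SGA3}, Expos\'e~XXIV: since the outer automorphism sheaf of a reductive group is finite \'etale, the (locally constant) Dynkin-type datum attached to $L$ globalizes and determines a quasi-split reductive $S$-group scheme $G$ with $\text{lien}(G) = L$. Modulo this globalization step, the remainder of the proof is a formal combination of the definition of ${\rm{ab}}^{2}$, the compatibility ${\rm{ab}}_{L}^{2} = {\rm{ab}}_{\lbe G}^{2}$, and Theorem~5.5(ii).
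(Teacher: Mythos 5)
Your proof is correct and follows essentially the same route as the paper's: both reduce to Theorem 5.5(ii) by exhibiting a global quasi-split reductive representative of $L$ and using the identification ${\rm ab}_{L}^{2}={\rm ab}_{G}^{2}$. The only real difference is bibliographic --- where you sketch the globalization step via SGA3, Exp.\ XXIV, the paper simply cites Douai (Proposition 1.2 of \cite{Do6}, with \cite{Do1} and \cite{Do}) for the existence of a quasi-split form $G_{L}$ with $L\simeq\mathrm{lien}(G_{L})$.
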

\begin{proof} Assume that $L$ is locally represented by a reductive $S$-group
scheme $G$. By \cite{Do6}, Proposition 1.2, p.22 (see also
\cite{Do1}, Lemma 1.1, and \cite{Do}, V.3.1, p.74), $L$ is, in fact,
globally represented by a quasi-split form $G_{\be L}$ of $G$, i.e.,
$L\simeq\text{lien}(G_{\be L})$. The result now follows by applying
part (ii) of the proposition to $G_{\be L}$.
\end{proof}

\begin{theorem} Let $K$ be a field of Douai type and of Galois cohomological
dimension $\leq 2$. Let $G$ be a (connected) reductive algebraic
group over $K$.
\begin{enumerate}
\item[(i)] If $H^{1}(K,H)$ is trivial for every semisimple and
simply-connected $K$-group $H\e$\footnote{By Serre's conjecture II
(see \cite{GS}), this is expected to follow from the hypothesis
${\rm{cd}}\, K\leq 2$.}, then the first abelianization map
$\,{\rm{ab}}^{1}\colon H^{1}(K,G)\ra H^{\e
1}_{\rm{ab}}(K_{\rm{fl}},G)$ is bijective.

\item[(ii)] There exists an exact sequence of pointed sets
$$
1\ra H^{2}(K_{\rm{fl}},G)^{\prime}\ra H^{2}(K_{\rm{fl}},G)
\overset{\! t}\ra H^{\e
2}(K,G^{\tor}\e) \ra 1,
$$
where $t$ is the map \eqref{t}.
\end{enumerate}
\end{theorem}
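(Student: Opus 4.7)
The plan is to treat (i) and (ii) separately, relying on Theorem 5.5 together with the machinery of Section 3.

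For (i), Theorem 5.5(i) already yields surjectivity of ${\rm{ab}}^{1}$ from the hypothesis that $K$ is of Douai type. To prove injectivity, I would apply Corollary 3.15 to the quasi-abelian crossed module $\big(\widetilde{G}\!\ra\be G\big)$: for any class $p\in H^{\le 1}(K,G)$ represented by a $G$-torsor $P$, the fiber of $\,{\rm{ab}}^{1}$ over ${\rm{ab}}^{1}(p)$ is in bijection with $H^{\le 1}(K,\!\phantom{.}^{P}\be\lbe\widetilde{G}\e)/H^{\e 0}_{\rm{ab}}(G)$. Since $\!\phantom{.}^{P}\be\lbe\widetilde{G}$ is an (inner) twist of the semisimple and simply-connected group $\widetilde{G}$, it is itself semisimple and simply-connected, and the hypothesis forces $H^{\le 1}(K,\!\phantom{.}^{P}\be\lbe\widetilde{G}\e)=1$. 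Every fiber of ${\rm{ab}}^{1}$ therefore reduces to a single point, giving bijectivity.

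For (ii), I would first unpack $t$. By the simply-transitive action \eqref{act}, every class $\xi\in H^{\e 2}(K_{\rm{fl}},G)$ may be written uniquely as $\xi=x\cdot\varepsilon_{\lbe G}$ with $x\in H^{\e 2}(K_{\rm{fl}},Z(G))$, and by the factorization \eqref{kamb2} one has $t(\xi)=c^{(2)}(x)$, where $c^{(2)}\colon H^{\e 2}(Z(G))\ra H^{\e 2}(G^{\tor})$ is induced by the projection $Z(G)\twoheadrightarrow Z(G)/Z(G^{\der})=G^{\tor}$. The short exact sequence $1\ra Z(G^{\der})\ra Z(G)\ra G^{\tor}\ra 1$, combined with the vanishing $H^{\e 3}(K,Z(G^{\der}))=0$ (which follows from the cohomological dimension hypothesis and the fact that $Z(G^{\der})$ is of multiplicative type), shows that $c^{(2)}$, and hence $t$, is surjective.

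For the kernel statement, since $K$ is of Douai type, every class of $H^{\e 2}\big(K_{\rm{fl}},\widetilde{G}\e\big)$ is neutral, and Corollary 4.5(ii) then yields that $\xi\in H^{\e 2}(G)^{\e\prime}$ if and only if ${\rm{ab}}^{2}(\xi)=j^{\e(2)}(x)=0$. By the exactness of \eqref{long}, $\krn\, j^{\e(2)}=\img\partial_{\lbe Z}^{(2)}$. The factorization $\partial_{\lbe Z}\colon Z\lbe\big(\widetilde{G}\e\big)\twoheadrightarrow Z(G^{\der})\hookrightarrow Z(G)$, together with the surjectivity of $H^{\e 2}\big(Z\lbe\big(\widetilde{G}\e\big)\big)\ra H^{\e 2}(Z(G^{\der}))$ (derived from $H^{\e 3}(K,\mu)=0$ via the exact sequence $1\ra\mu\ra Z\lbe\big(\widetilde{G}\e\big)\ra Z(G^{\der})\ra 1$), identifies $\img\partial_{\lbe Z}^{(2)}$ with $\krn\, c^{(2)}$. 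Hence $\xi\in H^{\e 2}(G)^{\e\prime}$ if and only if $c^{(2)}(x)=0$, i.e., if and only if $t(\xi)=0$. The main obstacle will be to justify the vanishings $H^{\e 3}(K,\mu)=0$ and $H^{\e 3}(K,Z(G^{\der}))=0$ in the fppf topology, which is straightforward when $\mu$ and $Z(G^{\der})$ are smooth but requires a comparison between flat and Galois cohomology for multiplicative-type groups in positive characteristic.
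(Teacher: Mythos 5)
Your argument for (i) is the paper's own: surjectivity from Theorem 5.5(i), injectivity from Corollary 3.15 plus the vanishing of $H^{1}$ of the twisted forms of $\widetilde{G}$ (the paper compresses this into one sentence, but the fiber description you give is exactly what Corollary 3.15 supplies). For (ii) you take a mildly different route. The paper gets surjectivity of ${\rm{ab}}^{2}$ from the exactness of the sequence of Theorem 5.1 at $H^{\e 2}_{\rm{ab}}(K_{\rm{fl}},G)$ together with $H^{\e 3}\big(K_{\rm{fl}},Z\big(\widetilde{G}\e\big)\big)=0$, identifies $\krn{\rm{ab}}^{2}$ with $H^{2}(K_{\rm{fl}},G)^{\prime}$ via Theorem 5.5(ii), and then notes that $t^{(2)}_{\rm{ab}}$ is an isomorphism by \eqref{kamb} and $H^{\e i}(K_{\rm{fl}},\mu)=0$ for $i\geq 3$; you instead work entirely in $H^{\e 2}(Z(G))$, proving surjectivity of $t$ from surjectivity of $c^{(2)}$ (via $H^{\e 3}(K_{\rm{fl}},Z(G^{\der}))=0$) and computing the kernel through $\krn j^{\e(2)}=\img\partial_{Z}^{(2)}=\krn c^{(2)}$ (via $H^{\e 3}(K_{\rm{fl}},\mu)=0$). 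Both versions rest on the same two inputs --- the neutrality criterion of Corollary 4.5(ii), which is where the Douai-type hypothesis enters through $\widetilde{G}$, and degree-$3$ fppf vanishing for finite commutative group schemes --- so the difference is essentially bookkeeping: you trade the hypercohomology sequences \eqref{long} and \eqref{kamb} for two short exact sequences of centers. Finally, the ``obstacle'' you flag at the end is not one: the paper settles it by citing Shatz's theorem on the fppf cohomological dimension of fields (\cite{Sh}, Theorem 4, p.593), which applies directly to the finite, generally non-smooth, group schemes $\mu$, $Z\big(\widetilde{G}\e\big)$ and $Z(G^{\der})$ with no smoothness or flat-versus-Galois comparison needed.
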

\begin{proof} (i) The hypothesis and Corollary 3.15 show that
${\rm{ab}}^{1}$ is injective. Since it is surjective by Theorem
5.5(i), it is in fact bijective.

(ii) Since $K$ has cohomological dimension $\leq 2$ and both $\mu$
and $Z\big(\widetilde{G}\e\big)$ are commutative and finite
$K$-group schemes, $H^{\e i}\lbe\big(K_{\rm{fl}},\mu)=H^{\e
i}\lbe\big(K_{\rm{fl}}, Z\big(\widetilde{G}\e\big)\big)=0$ for every
$i\geq 3$ by \cite{Sh}, Theorem 4, p.593. Thus Theorem 5.1 and
Theorem 5.5(ii) yield an exact sequence
$$
1\ra H^{2}(K_{\rm{fl}},G)^{\prime}\ra
H^{2}(K_{\rm{fl}},G)\overset{\rm{ab}^{2}}\longrightarrow H^{\e
2}_{\rm{ab}}(K_{\rm{fl}},G) \ra 1.
$$
On the other hand, \eqref{kamb} shows that $t_{\rm{ab}}^{(2)}\colon
H^{\e 2}_{\rm{ab}}(K_{\rm{fl}},G)\ra H^{\e 2}(K,G^{\tor})$ is an
isomorphism. Thus, since $t=t_{\rm{ab}}^{(2)}\circ\rm{ab}^{2}$, the
sequence of the statement is indeed exact.
\end{proof}

\begin{remarks}\indent

\begin{enumerate}

\item[(a)] A field which is either a complete and discretely-valued field with finite
residue field or a global field without real primes satisfies the
hypotheses, and therefore the conclusions, of the theorem. See
Examples 5.4(i) and (ii), \cite{PR}, Theorems 6.4 and 6.6, pp.284
and 286, \cite{BT}, Theorem 4.7(ii), and \cite{Ha}, Theorem A,
p.125.

\item[(b)] The conclusion in part (i) of the theorem for
the fields of types (gl), (ll) and (sl) mentioned in Remark 5.4(vi)
was previously established in \cite{BKG}, Theorem 6.7 (provided that
$G$ contains no factors of type $E_{8}$ in the (gl) case).

\item[(c)] Let $K$ be
either a global function field or the completion of such a field at
one of its primes. Let $G$ be a {\it semisimple} algebraic group
over $K$. Then ${\rm{ab}}^{\lbe 1}$ can be identified with the
coboundary map $H^{\le 1}(K,G)\ra H^{\e 2}(K_{\rm{fl}},\mu)$ induced
by the central extension $1\ra\mu\ra\widetilde{G}\ra G\ra 1$ (see
Remark 3.9(a)). Thus part (i) of the theorem generalizes \cite{Th},
Theorem A, p.458 (from semisimple to arbitrary connected reductive
groups over $K$).

\end{enumerate}
\end{remarks}

Now let $K$ be a global field and let $G$ be a (connected) reductive
algebraic group over $K$. Set
$$
\Sha^{1}\lbe(K,G)=\krn\!\!\left[\e H^{\e 1}(K,G)\ra\displaystyle
\prod_{\text{all $v$}}H^{\e 1}(K_{v},G)\right].
$$
and
$$
\Sha^{1}_{\rm{ab}}\lbe(K,G)=\krn\!\!\left[\e H^{1}_{\rm{ab}}(K_{
\rm{fl}},G)
\ra\displaystyle \prod_{\text{all $v$}}H^{1}_{\rm{ab}} (K_{
v,{\rm{fl}}},G)\right].
$$
\begin{corollary} Let $G$ be a (connected) reductive algebraic group
over a global field $K$. Then the abelianization map
${\rm{ab}}^{1}\colon H^{\e 1}(K,G)\ra H^{1}_{\rm{ab}}(K_{
\rm{fl}},G)$ induces a bijection
$$
\Sha^{1}\lbe(K,G)\simeq\Sha^{1}_{\rm{ab}}\lbe(K,G).
$$
\end{corollary}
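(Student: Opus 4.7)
The map ${\rm{ab}}^{1}$ is compatible with inverse images by Remark 5.6(a), so it restricts to a well-defined map of pointed sets $\Sha^{1}(K,G)\to\Sha^{1}_{\rm{ab}}(K,G)$. Both $K$ and its completions $K_{v}$ are of Douai type (Examples 5.4(i), (ii), (v)), so Theorem 5.5(i) applies uniformly: ${\rm{ab}}^{1}$ is surjective in every case and its fibers are described by Corollary 3.15 together with Remark 5.6(b). The plan is a diagram chase with these local and global sequences, whose only nontrivial inputs are that inner forms of $\widetilde{G}$ (which remain semisimple simply connected) have vanishing $H^{1}$ over non-archimedean local fields and satisfy the Hasse principle over $K$.

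For surjectivity, take $\eta\in\Sha^{1}_{\rm{ab}}(K,G)$ and lift it to some $\xi\in H^{1}(K,G)$. Corollary 3.15 with basepoint $\xi$ identifies the fiber $({\rm{ab}}^{1})^{-1}(\eta)$ with $H^{1}(K,\!\phantom{.}^{P_{\xi}}\widetilde{G})/H^{0}_{\rm{ab}}(K,G)$, and analogously at every $K_{v}$. At non-archimedean $v$, $H^{1}(K_{v},\!\phantom{.}^{P_{\xi}}\widetilde{G})=0$ --- Kneser in characteristic zero, Bruhat--Tits and Harder in positive characteristic --- so the local fiber is a singleton and $\xi_{v}=0$ is forced. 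This already finishes the function-field case. For $K$ a number field, choose at each real $v$ a representative $c_{v}\in H^{1}(K_{v},\!\phantom{.}^{P_{\xi}}\widetilde{G})$ whose local-fiber image is $0\in H^{1}(K_{v},G)$ and lift the family $(c_{v})$ to a global $a\in H^{1}(K,\!\phantom{.}^{P_{\xi}}\widetilde{G})$ via Kneser's Hasse principle for simply connected semisimple groups (Kneser--Harder--Chernousov). Then $a_{v}=0$ at non-archimedean $v$ automatically, and the image of $[a]$ under Corollary 3.15 is a class $\xi'\in\Sha^{1}(K,G)$ with ${\rm{ab}}^{1}(\xi')=\eta$.

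For injectivity, let $\xi_{1},\xi_{2}\in\Sha^{1}(K,G)$ satisfy ${\rm{ab}}^{1}(\xi_{1})={\rm{ab}}^{1}(\xi_{2})$, and let $[a]\in H^{1}(K,\!\phantom{.}^{P_{\xi_{1}}}\widetilde{G})/H^{0}_{\rm{ab}}(K,G)$ be the unique class whose image under the bijection of Corollary 3.15 (basepoint $\xi_{1}$) is $\xi_{2}$. Then $[a_{v}]=[0]$ at every $v$, so $a_{v}=0$ at non-archimedean $v$, while at real $v$ the class $a_{v}$ lies in the $H^{0}_{\rm{ab}}(K_{v},G)$-orbit of $0$, which by Remark 2.5(c) coincides with $\img\delta_{0}$ locally. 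I would then modify $a$ by a global element $\delta_{0}(c)$, $c\in H^{0}_{\rm{ab}}(K,G)$, chosen so that $\delta_{0}(c)_{v}=a_{v}$ at every real $v$ and $\delta_{0}(c)_{v}=0$ at every non-archimedean $v$, placing the adjusted $a$ in $\Sha^{1}(K,\!\phantom{.}^{P_{\xi_{1}}}\widetilde{G})$; the Hasse principle for simply connected groups then gives $a=0$ and hence $\xi_{1}=\xi_{2}$. The main obstacle is this final global adjustment in the number field case, which reduces to a real-approximation property for $H^{0}_{\rm{ab}}(K,G)={\bh}^{0}(K,Z(\widetilde{G})\to Z(G))$ extracted from the hypercohomology sequence \eqref{long}; over global function fields no adjustment is required and the argument collapses to local vanishing of $H^{1}$ combined with Harder's Hasse principle for $\widetilde{G}$.
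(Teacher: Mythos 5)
Your function-field argument is sound and is in substance what the paper does: the paper deduces this case from Theorem 5.8(i) applied over $K$ and over every completion $K_{v}$ (all of Douai type, all of cohomological dimension $\leq 2$, all with vanishing $H^{1}$ of semisimple simply-connected groups by Harder and Bruhat--Tits), so that ${\rm{ab}}^{1}$ is globally and locally \emph{bijective} and the statement on $\Sha^{1}$ is immediate. Your fiber-by-fiber phrasing via Corollary 3.15 amounts to the same thing. For the number field case, however, the paper does not reprove anything: it simply cites Borovoi \cite{Bor}, Theorem 5.13. You instead attempt a direct proof, and while your surjectivity argument (adjusting a lift of $\eta$ at the real places using the bijection $H^{1}\lbe\big(K,\!\phantom{.}^{P}\widetilde{G}\e\big)\simeq\prod_{v\,\rm{real}}H^{1}\lbe\big(K_{v},\!\phantom{.}^{P}\widetilde{G}\e\big)$) goes through, your injectivity argument has a genuine gap.

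Concretely: you reduce injectivity to finding a \emph{global} class $c\in H^{\e 0}_{\rm{ab}}(K,G)$ such that the twisted action of $c$ on the basepoint of $H^{1}\lbe\big(K,\!\phantom{.}^{P_{\xi_{1}}}\widetilde{G}\e\big)$ localizes to the prescribed classes $a_{v}$ at every real place. Knowing only that each $a_{v}$ lies separately in the local orbit of $0$ under $H^{\e 0}_{\rm{ab}}(K_{v},G)$ does not produce such a $c$; what is needed is (a form of) real approximation for $H^{\e 0}_{\rm{ab}}(K,G)={\bh}^{\e 0}\big(K,Z\big(\widetilde{G}\e\big)\!\ra\be Z(G)\big)$, i.e., a surjectivity statement for $H^{\e 0}_{\rm{ab}}(K,G)\ra\prod_{v\,\rm{real}}H^{\e 0}_{\rm{ab}}(K_{v},G)$ (or for its composite with the local maps $\delta_{0}$). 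Equivalently, the injectivity of $H^{1}\lbe\big(K,\widetilde{G}\e\big)/H^{\e 0}_{\rm{ab}}(K,G)\ra\prod_{v}H^{1}\lbe\big(K_{v},\widetilde{G}\e\big)/H^{\e 0}_{\rm{ab}}(K_{v},G)$ does not follow formally from the Hasse principle for $H^{1}\lbe\big(K,\widetilde{G}\e\big)$; it is precisely the nontrivial arithmetic input in Borovoi's proof. You name this as ``the main obstacle'' but leave it unresolved, so as written the number field half of the corollary is not proved; either supply the approximation argument or, as the paper does, invoke \cite{Bor}, Theorem 5.13, for that case.
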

\begin{proof} The number field case is due to M.Borovoi \cite{Bor},
Theorem 5.13. The function field case is obtained by applying
Theorem 5.8(i) over $K$ and over the various completions of $K$. See
Remark 5.9(a).
\end{proof}

\begin{remarks}\indent

\begin{enumerate}

\item[(a)] A result similar to the above is known to hold over
the fields of type (ll) mentioned in Remark 5.4(vi). See \cite{BKG},
Theorem 7.1.

\item[(b)] The corollary generalizes the function field case of
\cite{Do5}, Corollary 2.2, from semisimple to arbitrary (connected)
reductive groups.
\end{enumerate}
\end{remarks}

We now recall from \cite{CTGP} that a field $K$ is called {\it of
type (gl)} if it is the function field of a smooth, projective and
connected surface over an algebraically closed field $k$ of
characteristic zero. It is called {\it of type (ll)} if it is the
field of fractions of an excellent, henselian, two-dimensional local
domain $A$ with residue field $k$ as above. If $X$ is a smooth
projective model of $K$ over $k$ (respectively, if $X$ is a regular
and integral two-dimensional scheme equipped with a proper
birational morphism $X\ra\spec A$) and $v$ is a discrete valuation
associated to a point of codimension 1 on $X$, $K_{v}$ will denote
the completion of $K$ at $v$. See \cite{CTGP}, \S 1, for a
description of these fields. Let $G$ be a (connected) reductive
algebraic group over a field of type (gl) or (ll). For any prime $v$
of $K$, let ${\rm{res}}_{\le v}\colon H^{2}(K_{ \rm{fl}},G)\ra
H^{2}(K_{v, {\rm{fl}}},G)$ and ${\rm{res}}_{{\rm{ab}},\le v}\colon
H^{2}_{{\rm{ab}}}(K_{ \rm{fl}},G)\ra H^{2}_{{\rm{ab}}}(K_{v,
{\rm{fl}}},G)$ be the maps of pointed sets induced by $\spec
K_{v}\ra\spec K$. Further, we will write ${\rm{ab}}^{2}_{v}\colon
H^{\e 2}(K_{v, {\rm{fl}}},G)\ra H^{\e 2}_{\rm{ab}} (K_{v,
{\rm{fl}}},G)$ for the second abelianization map associated to
$G\be\times_{K}\! K_{v}$.

\smallskip

The next corollary is analogous to \cite{Bor2}, Theorem
6.8\footnote{Case (ll) of Corollary 5.12 is implicit in \cite{CTGP},
proof of Theorem 5.5.}.

\begin{corollary} Let $K$ be either a field of type (gl), (ll) or a
global function field. Let $G$ be a (connected) reductive algebraic
group over $K$ such that $\Sha^{2}\lbe(K,G^{\tor}\e)=0\!\!$
\footnote{Sufficient conditions for the vanishing of $\Sha^{2}
\lbe(K,G^{\tor}\e)$ can be found in \cite{CTGP}, Theorem
5.5(ii)-(iv), and \cite{Bor2}, Theorem 7.3(ii)-(vi).}. Then a class
$\xi\in H^{\e 2}(K_{\rm{fl}},G)$ is neutral if, and only if,
${\rm{res}}_{v}(\xi)\in H^{\e 2}(K_{v, {\rm{fl}}},G)$ is neutral for
every prime $v$ of $K$.
\end{corollary}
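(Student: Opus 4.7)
The plan is to combine the exact sequence of Theorem 5.8(ii), applied globally over $K$, with a standard local-global argument driven by the hypothesis $\Sha^{2}\lbe(K, G^{\tor}\e) = 0$. The ``only if'' direction is immediate: the inverse-image maps ${\rm res}_{v}$ send neutral classes to neutral classes by the functoriality built into Giraud's definition (see \cite{Gi}, IV.3.1).

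For the converse, assume that ${\rm res}_{v}(\xi)$ is neutral for every prime $v$ of $K$. The first step is to verify that each of the three types of fields $K$ is simultaneously of Douai type (Example 5.4(ii) for global function fields and Example 5.4(vi) for fields of types (gl) and (ll)) and of Galois cohomological dimension at most $2$ (standard in all three cases). Theorem 5.8(ii) therefore applies and yields the exact sequence
$$
1 \ra H^{2}(K_{\rm{fl}}, G)^{\prime} \ra H^{2}(K_{\rm{fl}}, G) \overset{t}{\ra} H^{\e 2}(K, G^{\tor}) \ra 1,
$$
reducing the task to showing $t(\xi) = 0$ in $H^{\e 2}(K, G^{\tor})$.

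The second step is localization. By Remark 4.3 and the naturality of the hypercohomology sequence \eqref{kamb}, the composite $t = t_{\rm{ab}}^{(2)} \circ {\rm{ab}}^{2}$ commutes with the base change $\spec K_{v} \ra \spec K$; that is, ${\rm res}_{v}(t(\xi)) = t_{v}({\rm res}_{v}(\xi))$, where $t_{v}$ is the corresponding map over $K_{v}$. Since ${\rm res}_{v}(\xi)$ is neutral, Lemma 4.4 applied to the quasi-abelian crossed module $\big(\widetilde{G} \ra G\big)$ over $K_{v,\e\rm{fl}}$ gives ${\rm{ab}}^{2}_{v}({\rm res}_{v}(\xi)) = 0$, and hence $t_{v}({\rm res}_{v}(\xi)) = 0$. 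Thus $t(\xi)$ lies in $\Sha^{2}\lbe(K, G^{\tor}\e)$, which vanishes by hypothesis, and exactness of the global sequence then forces $\xi \in H^{2}(K_{\rm{fl}}, G)^{\prime}$.

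The only point requiring care is bookkeeping: one must check that both constituents of $t$, namely the nonabelian abelianization map ${\rm{ab}}^{2}$ and the abelian connecting map $t_{\rm{ab}}^{(2)}$, are compatible with inverse images. Remark 4.3 handles the first, while the second is the routine naturality of the connecting map in the long exact hypercohomology sequence attached to the short exact sequence of complexes underlying \eqref{kamb}. Neither the Douai-type hypothesis nor cd $\le 2$ is needed for $K_{v}$ itself---Lemma 4.4 applies to any quasi-abelian crossed module on any site---so the argument is uniform across the three classes of fields under consideration.
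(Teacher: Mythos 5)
Your proof is correct, and its global skeleton is exactly the paper's: the paper's entire proof reads ``This is immediate from Theorem 5.8(ii) (applied over $K$ and over $K_{v}$ for every prime $v$ of $K$)'', i.e.\ one reduces to showing $t(\xi)=0$ and then uses compatibility with ${\rm{res}}_{v}$ together with $\Sha^{2}\lbe(K,G^{\tor})=0$. The one place you diverge is the local step: the paper invokes the full exact sequence of Theorem 5.8(ii) over each completion $K_{v}$, which silently requires that every $K_{v}$ be of Douai type and of cohomological dimension $\leq 2$ (clear for completions of a global function field via Example 5.4(i), but requiring a further check of the hypotheses of Example 5.4(vi) for the completions arising in the (gl) and (ll) cases). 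You instead observe that only the inclusion $H^{2}(K_{v,\rm{fl}},G)^{\e\prime}\subset\krn{\rm{ab}}^{2}_{v}$ is needed locally, which is Lemma 4.4 and holds for any quasi-abelian crossed module on any site with no hypotheses on $K_{v}$. This is a genuine, if small, improvement: it makes the argument uniform across the three classes of fields and removes a verification the paper leaves implicit. The remaining ingredients you cite --- neutrality being preserved under inverse image (\cite{Gi}, IV.3.1), compatibility of ${\rm{ab}}^{2}$ with base change (Remark 4.3), and naturality of $t^{(2)}_{\rm{ab}}$ from \eqref{kamb} --- are all in order.
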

\begin{proof} This is immediate from Theorem 5.8(ii) (applied over
$K$ and over $K_{v}$ for every prime $v$ of $K$).
\end{proof}

The following proposition complements the results of \cite{Bor2} and
concludes this paper.

\begin{proposition} Let $G$ be a (connected) reductive algebraic
group over a number field $K$. Then a class $\xi\in H^{2}_{\rm{ab}}
(K_{\rm{fl}},G)$ is in the image of ${\rm{ab}}^{2}$ if, and only if,
${\rm{res}}_{{\rm{ab}},v}\lbe(\xi)\in H^{\e 2}_{\rm{ab}} (K_{v,
{\rm{fl}}},G)$ is in the image of ${\rm{ab}}^{2}_{v}$ for every real
prime $v$ of $K$.
\end{proposition}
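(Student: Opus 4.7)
The plan is to use the exactness of the sequence of Theorem 5.1 at $H^{\e 2}_{\rm{ab}}(G)$ to translate the proposition into a local-global principle for $H^{\e 3}$ of the finite commutative $K$-group scheme $Z(\widetilde{G}\e)$, and then to invoke classical Poitou--Tate duality over the number field $K$.

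By the exact subsequence
$$
H^{\e 2}(K_{\rm{fl}},G)\overset{{\rm{ab}}^{2}}\lra H^{\e 2}_{\rm{ab}}(K_{\rm{fl}},G)\overset{\pi^{(2)}}\lra H^{\e 3}\lbe\bigl(K_{\rm{fl}},Z\bigl(\widetilde{G}\e\bigr)\bigr)\overset{\partial_{Z}^{(3)}}\lra H^{\e 3}(K_{\rm{fl}},Z(G))
$$
of Theorem 5.1, a class $\xi\in H^{\e 2}_{\rm{ab}}(K_{\rm{fl}},G)$ lies in the image of ${\rm{ab}}^{2}$ if and only if $\pi^{(2)}(\xi)=0$ in $H^{\e 3}(K_{\rm{fl}},Z(\widetilde{G}\e))$. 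The same criterion applied over each completion $K_{v}$ shows that ${\rm{res}}_{{\rm{ab}},v}(\xi)$ lies in the image of ${\rm{ab}}^{2}_{v}$ if and only if $\pi^{(2)}_{v}({\rm{res}}_{{\rm{ab}},v}(\xi))=0$ in $H^{\e 3}(K_{v,\rm{fl}},Z(\widetilde{G}\e))$. By the compatibility of the sequence of Theorem 5.1 with inverse images (Remark 4.3), one has $\pi^{(2)}_{v}\circ{\rm{res}}_{{\rm{ab}},v}={\rm{res}}_{v}\circ\pi^{(2)}$, where ${\rm{res}}_{v}\colon H^{\e 3}(K_{\rm{fl}},Z(\widetilde{G}\e))\to H^{\e 3}(K_{v,\rm{fl}},Z(\widetilde{G}\e))$ is the localization map. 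The proposition therefore reduces to the assertion that an element $\eta\in H^{\e 3}(K_{\rm{fl}},Z(\widetilde{G}\e))$ vanishes if (and only if) ${\rm{res}}_{v}(\eta)=0$ for every real prime $v$ of $K$.

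Since $K$ has characteristic zero, $Z(\widetilde{G}\e)$ is a finite \'etale $K$-group scheme, hence may be regarded as a finite Galois module and its flat cohomology coincides with its Galois cohomology. The required injectivity is then a standard consequence of Poitou--Tate duality: for any finite Galois module $M$ over a number field $K$, the natural map $H^{\e i}(K,M)\to \bigoplus_{v\ {\rm{real}}} H^{\e i}(K_{v},M)$ is an isomorphism for every $i\geq 3$ (see, e.g., \cite{Mi1}), using the vanishing of $H^{\e i}(K_{v},M)$ at every non-real prime $v$ for $i\geq 3$. I expect the only nontrivial input to be this Poitou--Tate isomorphism at $i=3$; everything else is a formal consequence of Theorem 5.1 applied globally and locally, combined with the functoriality recorded in Remark 4.3.
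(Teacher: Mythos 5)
Your proof is correct and follows essentially the same route as the paper: both arguments reduce the statement, via exactness of the abelianization sequence at $H^{\e 2}_{\rm{ab}}$ together with its compatibility with inverse images (Remark 4.3), to the fact that $H^{\e 3}\big(K,Z\big(\widetilde{G}\e\big)\big)\ra\prod_{v\ \text{real}}H^{\e 3}\big(K_{v},Z\big(\widetilde{G}\e\big)\big)$ is an isomorphism for the finite (\'etale, since $\mathrm{char}\,K=0$) group scheme $Z\big(\widetilde{G}\e\big)$. The only correction needed is the reference for that local-global isomorphism: it is \cite{Mi}, Theorem I.4.10(c) (Arithmetic Duality Theorems), rather than \cite{Mi1}.
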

\begin{proof} If $\xi$ is in the image of ${\rm{ab}}^{2}$, then
${\rm{res}}_{{\rm{ab}},v}\lbe(\xi)$ is in the image of
${\rm{ab}}^{2}_{v}$ for every prime $v$ of $K$ by the commutativity
of the diagram
$$
\xymatrix{H^{\le 2}(K_{\rm{fl}},G)\ar[r]^(.5){{\rm{ab}}^{2}}
\ar[d]^{{\rm{res}}_{v}}&
H^{2}_{\rm{ab}}(K_{\rm{fl}},G)
\ar[d]^{{\rm{res}}_{{\rm{ab}},v}}\\
H^{\le 2}(K_{v, {\rm{fl}}},G)\ar[r]^(.5){{\rm{ab}}_{v}^{2}}&
H^{2}_{\rm{ab}}(K_{v, {\rm{fl}}},G)}
$$
(see Remark 4.3). On the other hand, by Proposition 4.1,
$\img\e{\rm{ab}}^{2}$ is a subgroup of $H^{2}_{\rm{ab}}
(K_{\rm{fl}},G)$ and the corresponding quotient group
$\cok\rm{ab}^{\lbe 2}$ injects as a subgroup of $H^{\e
3}\big(K,Z\big(\widetilde{G}\e\big)\big)$. The proposition now
follows from the fact that the canonical map
$$
H^{3}\lbe
\big(K,Z\big(\widetilde{G}\e\big)\big)\ra\prod_{\e\text{$v$ real}}
H^{3}\lbe\big(K_{v},Z\big(\widetilde{G}\e\big) \big)
$$
is an isomorphism \cite{Mi}, Theorem I.4.10(c), p.70.
\end{proof}

\end{document}